\newtheorem{theorem}{Theorem}[section]
\newtheorem{assumption}{Assumption} 
\newtheorem{lemma}{Lemma}[section]
\newtheorem{proposition}{Proposition}[section]
\newtheorem{definition}{Definition}[section]
\newtheorem{remark}{Remark}[section]
\DeclareMathOperator*{\esssup}{ess\,sup}
\DeclareMathOperator*{\essinf}{ess\,inf}
\title[Path-Dependent Stochastic Differential Games] 
      {State and Control Path-Dependent Stochastic Zero-Sum Differential Games: Viscosity Solutions of Path-Dependent Hamilton-Jacobi-Isaacs Equations}
\author[Jun Moon]{}
\subjclass{Primary: 49L25, 
49L20; 
Secondary: 49N70.} 
 \keywords{Stochastic zero-sum differential games, state and control path-dependent PDEs, functional It\^o calculus, viscosity solutions, dynamic programming principles.}
 \email{jmoon12@uos.ac.kr}
\thanks{This research was supported in part by the National Research Foundation of Korea (NRF) Grant funded by the Ministry of Science and ICT, South Korea (NRF-2017R1E1A1A03070936, NRF-2017R1A5A1015311), and in part by Institute for Information \& communications Technology Promotion (IITP) grant funded by the Korea government (MSIT), South Korea (No. 2018-0-00958).}
\thanks{$^*$ Corresponding author: Jun Moon}
\begin{document}
\maketitle

\centerline{\scshape Jun Moon$^*$}
\medskip
{\footnotesize
 \centerline{School of Electrical and Computer Engineering}
 \centerline{University of Seoul, Seoul 02504, South Korea}
} 



\bigskip

 \centerline{(Communicated by the associate editor name)}

\begin{abstract}
	In this paper, we consider state and control path-dependent stochastic zero-sum differential games, where the dynamics and the running cost include both state and control paths of the players. Using the notion of nonanticipative strategies, we define lower and upper value functionals, which are functions of the initial state and control paths of the players. We prove that the value functionals satisfy the dynamic programming principle. The associated lower and upper Hamilton-Jacobi-Isaacs (HJI) equations from the dynamic programming principle are state and control path-dependent nonlinear second-order partial differential equations. We apply the functional It\^o calculus to prove that the lower and upper value functionals are viscosity solutions of (lower and upper) state and control path-dependent HJI equations, where the notion of viscosity solutions is defined on a compact subset of an $\kappa$-H\"older space introduced in \cite{Tang_DCD_2015}. Moreover, we show that the Isaacs condition and the uniqueness of viscosity solutions imply the existence of the game value. For the state path-dependent case, we prove the uniqueness of classical solutions for the (state path-dependent) HJI equations.
\end{abstract}

\section{Introduction}\label{Section_1}

Since the seminal papers by Friedman \cite{Friedman_1972} and Fleming and Souganidis \cite{Fleming_1989}, the study of two-player stochastic zero-sum differential games (SZSDGs) and nonzero-sum stochastic differential games (SDGs) has grown rapidly in various aspects; see \cite{Basar2, Bayraktar_2013_SICON, Buckdahn_Cardaliaguet_DGAA_2011, Buckdahn_SICON_2004, Buckdahn_SICON_2008, Cardaliaguet_IGTR_2008, Djehiche_AMO_2018, Karoui_2003, Hamadene_SSR_1995, Li_AMO_2015, Moon_TAC_Risk_2019, Sun_Yong_SICON_2014, YU_SICON_2015} and the references therein. Specifically, Friedman in \cite{Friedman_1972} considered SDGs with classical (or smooth) solutions of the associated partial differential equation (PDE) from dynamic programming to prove the existence of the Nash equilibrium and the game value. Fleming and Souganidis in \cite{Fleming_1989} studied SZSDGs in the Markovian framework with \emph{nonanticipative strategies}. They proved that the lower and upper value functions are unique viscosity solutions (in the sense of \cite{Crandall_AMS_1992_Viscosity}) for lower and upper Hamilton-Jacobi-Isaacs (HJI) equations obtained from dynamic programming, which are nonlinear second-order partial differential equations (PDEs). They also showed the existence of the game value under the Isaacs condition. Later, the results of \cite{Fleming_1989} were extended by Buckdahn and Li in \cite{Buckdahn_SICON_2008}, who defined the objective functional by the backward stochastic differential equation (BSDE). They used the \emph{backward semi-group} associated with the BSDE introduced in \cite{Peng_SSR_1992} to obtain the generalized results of \cite{Fleming_1989}. The weak formulation of SZSDGs and SDGs with random coefficients was considered in \cite{Karoui_2003, Hamadene_SSR_1995}, where the existence of the open-loop type saddle-point equilibrium as well as the game value was established.

Recently, state path-dependent SZSDGs have been studied extensively in the literature to consider a general class of SZSDGs including SZSDGs with delay in the state variable. This extends the results in \cite{Buckdahn_SICON_2008, Fleming_1989} to the non-Markovian framework. Unlike \cite{Buckdahn_SICON_2008, Fleming_1989}, for the path-dependent or non-Markovian case, the associated (lower and upper) HJI equations obtained from dynamic programming are the so-called (state) path-dependent PDEs (PPDEs) defined on a space of continuous functions, which is an infinite dimensional Banach space. Hence, the approach for the Hilbert space in \cite{Crandall_Lions_JFA_1985, Lions_AM_1988} cannot be applied to show the existence (and uniqueness) of viscosity solutions. In \cite{Pham_SICON_2014, Possamai_arXiv_2018}, state path-dependent SZSDGs in weak formulation were studied, where the players are restricted to observe the state feedback information. The existence of viscosity solutions for state path-dependent HJI equations was shown in \cite{Pham_SICON_2014, Possamai_arXiv_2018} in the sense of  \cite{Ekren_AP_2014, Ekren_AP_2016_Part_I, Ekren_AP_2016}, which involves a nonlinear expectation in the corresponding semi-jets. For the uniqueness, \cite{Pham_SICON_2014} imposed the assumption on the maximum dimension of the state space ($n\leq 2$) and the nondegeneracy condition of the diffusion coefficient (see \cite[Section 6]{Pham_SICON_2014} and \cite[Remark 3.7]{Possamai_arXiv_2018}). Note that \cite{Possamai_arXiv_2018} did not prove the uniqueness of viscosity solutions. As mentioned in  \cite[p. 10]{Possamai_arXiv_2018}, the major motivation of SZSDGs in weak formulation is to study the existence of the saddle-point equilibrium; however, it requires more stringent assumptions on the coefficients than SZSDGs in strong formulation. Zhang in \cite{Zhang_SICON_2017} studied path-dependent SZSDGs in strong formulation, where the existence of the game value and the approximated saddle-point equilibrium, both under the Isaacs condition, were established via the approximating technique of the (lower and upper) state path-dependent HJI equations.\footnote{Note that \cite{Zhang_SICON_2017} did not consider the existence and uniqueness of (classical or viscosity) solutions of state path-dependent HJI equations.}

In this paper, we consider state and control path-dependent stochastic zero-sum differential games (SZSDGs), where the dynamics and the running cost are dependent on both the state and control path of the players. Note that the paper can be viewed as a generalization of \cite{Zhang_SICON_2017} to the state and control path-dependent case, and of \cite{Saporito_SICON_2019} to the two-player SZSDG framework. We mention that the viscosity solution of state path-dependent HJI equations was not studied in \cite{Zhang_SICON_2017}. In \cite{Saporito_SICON_2019}, state and control path-dependent stochastic control problem and (nonzero-sum) differential games were considered, where the existence of classical (smooth) solutions of the corresponding state and control path-dependent Hamilton-Jacobi-Bellman (HJB) equation was assumed to establish the verification theorem. Note also that SZSDGs with weak formulation in \cite{Pham_SICON_2014, Possamai_arXiv_2018} did not consider the control path-dependent case.

By using the notion of nonanticipative strategies, the lower and upper value functionals are defined, whereby these are functions of the initial state and control paths of the players. Then by using the semigroup operator associated with the BSDE objective functional, we prove that the (lower and upper) value functionals satisfy the dynamic programming principle, which is the recursive-type value iteration algorithm. In the proof of the dynamic programming principle, the separability of the space of c\`adl\`ag functions is essential, which is guaranteed with the Skorohod metric. The dynamic programming principle also leads to the continuity property of the (lower and upper) value functionals in their arguments.

The associated lower and upper state and control path-dependent Hamilton-Jacobi-Isaacs (PHJI) equations from the dynamic programming principle are state and control path-dependent nonlinear second-order PDEs (PPDEs), whose structures are fundamentally different from those of PPDEs or state path-dependent HJI equations in \cite{Ekren_AP_2014, Ekren_AP_2016_Part_I, Ekren_AP_2016, Peng_Gexpectation_arXiv_2011, Pham_SICON_2014, Possamai_arXiv_2018, Tang_DCD_2015}. In particular, the time derivative term also depends on the control of the players, which is included in $\sup_{v \in \mathrm{V}}	 \inf_{u \in \mathrm{U}} $ of the lower PHJI equation and $\inf_{u \in \mathrm{U}} \sup_{v \in \mathrm{V}}$ of the upper PHJI equation (see (\ref{eq_5_2}) and (\ref{eq_5_3})). We apply the functional It\^o calculus introduced in \cite{Cont_JFA_2010, Cont_AOP_2013, Dupire_2009} to prove that the lower and upper value functionals are viscosity solutions of the (lower and upper) PHJI equations, where the notion of viscosity solutions is defined on a compact subset of an $\kappa$-H\"older space introduced in \cite{Tang_DCD_2015}. Specifically, the notion of viscosity solutions is defined on a compact set $\mathbb{C}^{\kappa,\mu,\mu_0}$, where $\kappa \in (0,\frac{1}{2})$ and $\mu,\mu_0 > 0$, which provides the precise estimate between the initial state path and its perturbed one. This initial state path perturbation is essential to prevent starting the dynamic programming at the boundary of $\mathbb{C}^{\kappa,\mu,\mu_0}$ (see \cite[Remark 6]{Tang_DCD_2015}). Then using the functional It\^o calculus and the dynamic programming principle, we show that the (lower and upper) value functionals are viscosity solutions of the corresponding PHJI equations. In our definition of viscosity solutions, the  \emph{predictable dependence} condition for test functions is essential to handle the control path-dependent nature of the problem, where the similar condition was also introduced in \cite{Cont_JFA_2010, Saporito_SICON_2019}.

We also show that if the state and control path-dependent Isaacs condition and the uniqueness of viscosity solutions hold, then the game admits a value, i.e., the lower and upper value functionals coincide. This does not require the approximating technique of the (lower and upper) PHJI equations to the state dependent (not path-dependent) HJI equations studied in \cite{Zhang_SICON_2017}. The general uniqueness of viscosity solutions in our paper will be investigated in a future research study. Instead, we provide the uniqueness of classical solutions for the (lower and upper) state path-dependent HJI equations. In particular, under an additional assumption (see Assumption \ref{Assumption_3}), we prove the comparison principle of classical sub- and super-solutions of the lower and upper state path-dependent HJI equations, which further implies the uniqueness of classical solutions (for the state path-dependent case).

The paper is organized as follows. In Section \ref{Section_1_1}, we provide notation and preliminary results of the functional It\^o calculus introduced in \cite{Cont_JFA_2010, Cont_AOP_2013, Dupire_2009, Saporito_SICON_2019}. The problem formulation is given in Section \ref{Section_2}. In Section \ref{Section_3}, we show that the dynamic programming principle and obtain the regularity of the value functionals. In Section \ref{Section_4}, we introduce the lower and upper PHJI equations and prove that the value functionals are viscosity solutions of the corresponding PHJI equations. Several potential future research problems are also discussed at the end of Section \ref{Section_4}.

\section{Notation and Preliminaries}\label{Section_1_1}
The $n$-dimensional Euclidean space is denoted by $\mathbb{R}^n$, and the transpose of a vector $x \in \mathbb{R}^n$ by $x^\top$. The inner product of $x,y \in \mathbb{R}^n$ is denoted by $\langle x,y \rangle := x^\top y$, and the Euclidean norm of $x \in \mathbb{R}^n$ by $|x| := \langle x,x\rangle^{\frac{1}{2}}$. Let $\Tr(X)$ be the trace operator of a square matrix $X \in \mathbb{R}^{n \times n}$. Let $\mathds{1}$ be the indicator function. Let $\mathbb{S}^n$ be the set of $n\times n$ symmetric matrices.

We introduce the calculus of path-dependent functionals in \cite{Cont_JFA_2010, Cont_AOP_2013, Dupire_2009}; see also \cite{Ekren_AP_2014, Saporito_SICON_2019, Tang_DCD_2015}. We follow the notation in \cite{Dupire_2009, Saporito_SICON_2019}. For a fixed $T > 0$ and $t \in [0,T]$, let $\Lambda_t^n :=  C([0,t],\mathbb{R}^n)$ be the set of $\mathbb{R}^n$-valued continuous functions on $[0,t]$, and $\hat{\Lambda}_t^n :=  D([0,t],\mathbb{R}^n)$ the set of $\mathbb{R}^n$-valued c\`adl\`ag functions on $[0,t]$. Let $\Lambda_t^E := C([0,t],E)$ and $\hat{\Lambda}_t^E := D([0,t],E)$ for $E \subset \mathbb{R}^n$. Let $\Lambda_t^{n \times m} := \Lambda_t^n \times \Lambda_t^m$, $\Lambda^n := \cup_{t \in [0,T]} \Lambda_t^n$, and $\Lambda^{n \times m} := \cup_{t \in [0,T]} \Lambda_t^n \times \Lambda_t^m$. 
For any functions in $\Lambda^n$, the capital letter stands for the \emph{path} and the lowercase letter will denote the value of the function at a specific time. Specifically, for $A \in \Lambda_T^n$, $a_t$ stands for the value of $A$ at $t \in [0,T]$, and for $t \in [0,T]$, we denote $A_t := \{a_r,~ r \in [0,t]\} \in \Lambda_t^n$ by the path of the corresponding function up to time $t \in [0,T]$. A similar notation applies to $\hat{\Lambda}^n$. Note that $\Lambda^n \subset \hat{\Lambda}^n$.

For $A \in \hat{\Lambda}^n$ and $\delta > 0$, we introduce the following notation:
\begin{align*}
A_{t,\delta t}[s] := \begin{cases}
 	a_s & \text{if}~ s \in [0,t) \\
 	a_t & \text{if}~ s \in [t,t+\delta t]
 \end{cases},~ A_{t}^{(h)}[s] := \begin{cases}
 	a_s & \text{if}~ s \in [0,t) \\
 	a_t + h & \text{if}~ s = t.
 \end{cases} 
\end{align*}
Note that $A_{t,\delta t}$ is the flat extension, and $A_{t}^h$ is the vertical extension of the path $A$.  The metric on $\hat{\Lambda}^n$ is defined for $A_t,B_{t^\prime} \in \hat{\Lambda}^n$ with $t,t^\prime \in [0,T]$ and $t \leq t^\prime$,
\begin{align*}	
d_{\infty}(A_t,B_{t^\prime}) := |t-t^\prime| + \|A_{t,t^\prime - t} - B_{t^\prime} \|_{\infty},
\end{align*}
where $\| \cdot \|_{\infty}$ is the norm in $\hat{\Lambda}_t^n$ defined by $\|B_t\|_\infty := \sup_{r \in [0,t]} |b_r|$. 

Note that $(\hat{\Lambda}^n,d_{\infty})$ is a complete metric space, and $(\hat{\Lambda}_t^n,\|\cdot\|_{\infty})$ is a Banach space. The same results hold for $(\Lambda^n,d_{\infty})$ and $(\Lambda_t^n,\|\cdot\|_{\infty})$. Unfortunately, $\hat{\Lambda}_t^n$ is  not separable under the metric $\tilde{d}$ induced by $\|\cdot\|_{\infty}$. Therefore, we introduce the following metric: $d_{\infty}^\prime	(A_t,B_{t^\prime}) := |t-t^\prime| + d^\circ (A_{t,t^\prime - t},B_{t^\prime})$, where with $\Gamma$, the class of strictly increasing and continuous mappings, the \emph{Skorohod metric} \cite[Section 12, (12.13)]{Billingsley_book} is defined by $d^\circ (A_{t,t^\prime - t},B_{t^\prime}) := \inf_{\iota \in \Gamma} \{ \sup_{r \in [0,t^\prime]} |\iota_r - r| \vee \|A_{t,t^\prime - t} - B_{t^\prime \iota}\|_{\infty} \}$. By definition, $\|A_{t,t^\prime - t} - B_{t^\prime \iota}\|_{\infty} = \sup\{\sup_{r \in [0,t]} |a_r - b_{\iota_r}|, \sup_{r \in [t,t^\prime]} |a_t - b_{\iota_r}| \}$, which allows a deformation on the time scale to define a distance between $A$ and $B$. As shown in \cite[Section 12]{Billingsley_book}, $d^\circ$ is a metric and so is $d_{\infty}^\prime$. Then $\hat{\Lambda}_t^n$ is separable under $d^\circ$ \cite[Theorem 12.2]{Billingsley_book}. We can easily see that $d^\circ \leq \tilde{d}$, which implies $d_{\infty}^\prime \leq d_{\infty}$.

\begin{definition}\label{Definition_0_1}
A functional is any function $f : \hat{\Lambda}^n \rightarrow \mathbb{R}$. The functional $f$ is said to be continuous at $A_t \in \hat{\Lambda}^n$, if for each $\epsilon > 0$, there exists $\delta > 0$ such that for each $A_{t^\prime}^\prime \in \hat{\Lambda}^n$, $d_{\infty} (A_t,A_{t^\prime}^\prime) < \delta$ implies $|f(A_t) - f(A_{t^\prime}^\prime) |< \epsilon$. The continuity under $d_{\infty}$ implies the continuity under $d_{\infty}^\prime$. Let $\mathcal{C}(\hat{\Lambda}^n)$ be the set of real-valued continuous functionals for every path $A_t \in \hat{\Lambda}^n$ under $d_{\infty}$. The set $\mathcal{C}(\Lambda^n)$ is defined similarly.
%
\end{definition}

Next, we introduce the concept of time and space derivatives of the functional $f$.

\begin{definition}\label{Definition_0_2}
\begin{enumerate}[(i)]
	\item Let $f:\hat{\Lambda}^n \rightarrow \mathbb{R}$ be the functional. The time derivative (or horizontal derivative) of $f$ at $A_t$ is defined by $\partial_t f(A_t) := \lim_{\delta t \downarrow 0}	\frac{f(A_{t,\delta t} ) - f(A_t)}{\delta t}$. If the limit exists for all $A_t \in \hat{\Lambda}^n$, a functional $\partial_t f:\hat{\Lambda}^n \rightarrow \mathbb{R}$ is called the time derivative of $f$.
	\item 
	The space derivative (or vertical derivative) of $f$ at $A_t$ is defined by $\partial_x f(A_t) := \begin{bmatrix}
 			\partial_x^{1} f(A_t) & \cdots & \partial_x^{n} f(A_t) \end{bmatrix}$, where for $e_i$, $i=1,\ldots,n$, being a coordinate unit vector of $\mathbb{R}^n$, $\partial_x^{i} f(A_t) := \lim_{h \downarrow 0} \frac{f(A_t^{^{(h e_i)}}) - f(A_t)}{h}$. If the limit exists for all $A_t \in \hat{\Lambda}^n$ and $i=1,\ldots,n$, a functional $\partial_x f:\hat{\Lambda}^n \rightarrow \mathbb{R}^n$ is called the space derivative of $f$. Note that the second-order space derivative (Hessian) $\partial_{xx} f$ can be defined in a similar way, where $\partial_{xx} f : \hat{\Lambda}^n \rightarrow \mathbb{S}^n$.
\end{enumerate}	
\end{definition}

\begin{remark}\label{Remark_0_3}
	If a functional $f$ is differentiable in the sense of Definition \ref{Definition_0_2} and depends only on a function (not its path), i.e., $f(A_t) = f(t,a_t)$, then the notion of derivatives in Definition \ref{Definition_0_2} is equivalent to those for the classical ones. 
\end{remark} 

From Definition \ref{Definition_0_2}, let $\mathcal{C}^{k,l}(\hat{\Lambda}^n)$ be the set of functionals such that for $f \in \mathcal{C}^{k,l}(\hat{\Lambda}^n)$, $f$ is $k$ times time differentiable and $l$ times space differentiable in $\hat{\Lambda}^n$, where all its derivatives are continuous in the sense of Definition \ref{Definition_0_1}. The set $\mathcal{C}^{k,l}(\Lambda^n)$ is defined similarly. We mention that these sets are well defined in view of \cite[Definition 2.4 and Remark 2]{Tang_DCD_2015} (see also \cite[Theorem 2.4]{Ekren_AP_2014} and \cite{Cont_JFA_2010, Cont_AOP_2013, Dupire_2009}).

\begin{definition}\label{Definition_0_4}
Let $A_t \in \Lambda^n$. For any $\kappa \in (0,1]$, $A$ is an $\kappa$-H\"older continuous path if the following limit exists: $[\![ A_t ]\!]_{\kappa} :=\sup_{0 \leq s \leq r \leq t} \frac{|a_s - a_r|}{|s-r|^\kappa} < \infty$, where we call $[\![ A_t ]\!]_{\kappa}$ the $\kappa$-H\"older modulus of $A_t$. The $\kappa$-H\"older space is defined by $\mathbb{C}^{\kappa}(\Lambda^n) := \{ A_t \in \Lambda^n~:~	[\![ A_t ]\!]_{\kappa} < \infty \}$. The $\kappa$-H\"older space with $\mu > 0$ is defined by $\mathbb{C}^{\kappa,\mu}(\Lambda^n) := \{ A_t \in \Lambda^n~:~ 	[\![ A_t ]\!]_{\kappa} \leq \mu \}$. The $\kappa$-H\"older space with $\mu > 0$ and $\mu_0 > 0$ is defined by 
\begin{align*}
	\mathbb{C}^{\kappa,\mu, \mu_0}(\Lambda^n) := \{ A_t \in \Lambda^n~:~ [\![ A_t ]\!]_{\kappa} \leq \mu,~ \|A_t\|_{\infty} \leq \mu_0 \}.
\end{align*}
\end{definition}

We can easily see that $\mathbb{C}^{\kappa}(\Lambda^n) \subset \Lambda^n$. The spaces $\mathbb{C}^{\kappa,\mu}(\Lambda^n)$ and $\mathbb{C}^{\kappa,\mu, \mu_0}(\Lambda^n)$ have the following topological property \cite[Proposition 1]{Tang_DCD_2015}:

\begin{lemma}\label{Lemma_0_5}
	For $\kappa \in (0,1]$, $\mathbb{C}^{\kappa,\mu}(\Lambda^n)$ and $\mathbb{C}^{\kappa,\mu, \mu_0}(\Lambda^n)$ are compact subsets of $(\Lambda^n,d_{\infty})$.
\end{lemma}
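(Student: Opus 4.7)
The plan is to prove compactness via sequential compactness, which is equivalent to compactness in the metric space $(\Lambda^n,d_\infty)$. I will focus on $\mathbb{C}^{\kappa,\mu,\mu_0}(\Lambda^n)$, which is the cleaner case; the argument for $\mathbb{C}^{\kappa,\mu}(\Lambda^n)$ proceeds identically under the standing convention that paths share a common fixed initial value (otherwise a uniform bound on initial data is needed, since a sequence of constant paths $a^k_s \equiv k$ already shows that the plain Hölder bound alone does not yield a bounded family).

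First I take an arbitrary sequence $(A^k_{t_k})_{k\ge 1} \subset \mathbb{C}^{\kappa,\mu,\mu_0}(\Lambda^n)$. Since $t_k \in [0,T]$, extract a subsequence (not relabelled) with $t_k \to t^* \in [0,T]$. The central device is to embed the whole family into the single Banach space $C([0,T],\mathbb{R}^n)$ via the flat extension $\tilde A^k$ defined by $\tilde a^k_s := a^k_s$ for $s \in [0,t_k]$ and $\tilde a^k_s := a^k_{t_k}$ for $s \in (t_k,T]$. Any increment satisfies $|\tilde a^k_s - \tilde a^k_r| = |a^k_{s\wedge t_k} - a^k_{r\wedge t_k}| \le \mu |s-r|^\kappa$, so $[\![\tilde A^k]\!]_\kappa \le \mu$, and by construction $\|\tilde A^k\|_\infty \le \mu_0$.

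Next, the family $\{\tilde A^k\}$ is uniformly bounded and equicontinuous on $[0,T]$ (the uniform Hölder bound furnishes the modulus of continuity), so by the Arzelà--Ascoli theorem I extract a further subsequence converging uniformly on $[0,T]$ to some $\tilde A^* \in C([0,T],\mathbb{R}^n)$. Passing to the limit in $|\tilde a^k_s - \tilde a^k_r| \le \mu |s-r|^\kappa$ and in $\|\tilde A^k\|_\infty \le \mu_0$ preserves both constraints, so $A^*_{t^*} := \tilde A^*|_{[0,t^*]}$ belongs to $\mathbb{C}^{\kappa,\mu,\mu_0}(\Lambda^n)$. It remains to verify $d_\infty(A^k_{t_k},A^*_{t^*}) \to 0$. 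The time term $|t_k - t^*|$ vanishes by construction. For the spatial term, after extracting once more one may assume either $t_k \le t^*$ for all $k$, in which case $A^k_{t_k,t^*-t_k} = \tilde A^k|_{[0,t^*]}$ and $\|A^k_{t_k,t^*-t_k} - A^*_{t^*}\|_\infty \le \|\tilde A^k - \tilde A^*\|_{L^\infty[0,T]} \to 0$, or $t_k \ge t^*$, treated symmetrically: on $[0,t^*]$ the estimate is the same, while on $[t^*,t_k]$ one bounds $|a^k_s - a^*_{t^*}| \le |\tilde a^k_s - \tilde a^k_{t^*}| + |\tilde a^k_{t^*} - \tilde a^*_{t^*}| \le \mu (t_k-t^*)^\kappa + \|\tilde A^k - \tilde A^*\|_{L^\infty[0,T]} \to 0$.

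The main obstacle, and the only genuinely non-routine point, is accommodating paths that live on \emph{different} time intervals $[0,t_k]$: a direct invocation of Arzelà--Ascoli is not possible until one has placed the family in a common function space. The flat-extension embedding is the right device because it simultaneously preserves the two constraints defining the Hölder set and matches the flat extension that appears in the very definition of $d_\infty$, so uniform convergence on $[0,T]$ translates directly into $d_\infty$-convergence. After this preparation the argument is textbook Arzelà--Ascoli, and the restriction of the uniform limit to $[0,t^*]$ produces the required limit in $\mathbb{C}^{\kappa,\mu,\mu_0}(\Lambda^n)$.
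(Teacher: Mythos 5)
Your proof is correct. The paper does not actually prove this lemma --- it is quoted from \cite[Proposition 1]{Tang_DCD_2015} --- so there is no in-paper argument to compare against; your flat-extension-plus-Arzel\`a--Ascoli argument is the standard route and is essentially the one behind the cited result. The one step that needed care, paths living on different intervals $[0,t_k]$, is handled properly: the flat extension preserves both $[\![ \cdot ]\!]_{\kappa}\le\mu$ and $\|\cdot\|_\infty\le\mu_0$, it matches the flat extension built into $d_\infty$, and the overlap $[t^*,t_k]$ is controlled by the H\"older modulus exactly as you wrote, so uniform convergence on $[0,T]$ does translate into $d_\infty$-convergence and closedness of the constraints passes to the limit. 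Your caveat about $\mathbb{C}^{\kappa,\mu}(\Lambda^n)$ is also well taken: as literally written in Definition \ref{Definition_0_4} that set contains every constant path, hence is unbounded and cannot be compact, so the first half of the statement tacitly presupposes a fixed initial value or a uniform bound on the paths; the set actually used later in Definition \ref{Definition_5_2} is $\mathbb{C}^{\kappa,\mu,\mu_0}$, for which your argument gives compactness as stated.
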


%
%
%
%
\begin{definition}\label{Definition_0_6}
Let $f : \Lambda^n \rightarrow \mathbb{R}$ be the functional. For $\kappa \in (0,1]$, $f$ is H\"older continuous if  the following limit exists: $[f]_{\kappa; \Lambda^n} := \sup_{A_t,A_{t^\prime}^\prime \in \Lambda^n, A_t \neq A_{t^\prime}^\prime  } \frac{|f(A_t) - f(A_{t^\prime}^\prime)|}{d_{\infty}^\kappa (A_t,A_{t^\prime}^\prime)}$. Assume that $f \in \mathcal{C}^{1,2}(\Lambda^n)$. We define $|f|_{\kappa;\Lambda^n} := \sup_{A_t \in \Lambda^n} |f(A_t)| + [f]_{\kappa;\Lambda^n} \nonumber$ and
\begin{align}	
\label{eq_0_1}
|f|_{2,\kappa;\Lambda^n} &:= |f|_{\kappa;\Lambda^n} + |\partial_t f|_{\alpha;\Lambda^n} + |\partial_x f|_{\kappa;\Lambda^n} + |\partial_{xx} f|_{\kappa;\Lambda^n}.
\end{align}
The set of functionals such that (\ref{eq_0_1}) is finite is denoted by $\mathcal{C}^{1,2}_{\kappa} (\Lambda^n)$.
\end{definition}

Let $(\Omega,\mathcal{F},\mathbb{P})$ be a complete probability space satisfying the usual condition \cite{Karantzas}. Let $B$ be the standard $p$-dimensional Brownian motion defined on $(\Omega,\mathcal{F},\mathbb{P})$. Let $\mathbb{F} = \{ \mathcal{F}_t,~ 0 \leq t \leq T\}$ be the standard natural filtration generated by the Brownian motion $B$ augmented by all the $\mathbb{P}$-null sets of $\mathcal{F}$. Let $\mathcal{L}^2(\Omega,\mathcal{F}_t,\mathbb{R}^n)$ be the set of $\mathbb{R}^n$-valued $\mathcal{F}_t$-measurable random vectors such that $g \in \mathcal{L}^2(\Omega,\mathcal{F}_t,\mathbb{R}^n)$ satisfies $\mathbb{E}[|g|^2] < \infty$. Let $\mathcal{L}^2_{\mathbb{F}}([t,T],\mathbb{R}^n)$ be the set of $\mathbb{R}^n$-valued $\mathbb{F}$-adapted stochastic processes such that $g \in \mathcal{L}^2_{\mathbb{F}}([t,T],\mathbb{R}^n)$ satisfies $\mathbb{E}[\int_t^T |g(s)|^2 \dd s] < \infty$. Let $\mathcal{C}_{\mathbb{F}}([t,T],\mathbb{R}^n)$ be the set of $\mathbb{R}^n$-valued continuous and $\mathbb{F}$-adapted stochastic processes such that $g \in \mathcal{C}_{\mathbb{F}}([t,T],\mathbb{R}^n)$ satisfies $\mathbb{E}[\sup_{s \in [t,T]} |g(s)|^2 ] < \infty$. 

Let $x \in \Lambda_T^n$ be the $n$-dimensional $\mathbb{F}$-adapted stochastic process, which is defined on $(\Omega,\mathcal{F},\mathbb{P})$. Note that $x$ can be viewed as a mapping from $\Omega$ to $\Lambda_T^n$. By using the notation, for $ t \in [0,T]$, $X_t := \{x_r,~ r \in [0,t]\} \in \Lambda_t^n$ is the path of $x$ up to time $t \in [0,T]$, and $x_t$ is the value of $X$ at time $t \in [0,T]$. We can see that for any functional $f \in \mathcal{C}(\Lambda^n)$, $\{f(X_t),~ t \in [0,T]\}$ is an $\mathbb{F}$-adapted stochastic process.

We now state the functional It\^o formula in \cite{Cont_JFA_2010, Cont_AOP_2013, Dupire_2009}
\begin{lemma}\label{Lemma_0_7}
	Suppose that $x$ is continuous semi-martingale, and $f \in \mathcal{C}^{1,2}(\Lambda^n)$. Then for any $t \in [0,T]$, 
	\begin{align*}
	f(X_t) &= f(X_0)  + \int_0^t \partial_t f(X_r) \dd r + \int_0^t \partial_x f(X_r) \dd  x_r  + \frac{1}{2} \int_0^t \partial_{xx} f(X_r) \dd \langle x \rangle_r,~ \text{$\mathbb{P}$-a.s.}
	\end{align*}

\end{lemma}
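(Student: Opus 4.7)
The plan is to follow the classical Dupire--Cont--Fourni\'e argument, which rests on a pathwise/stochastic Taylor expansion along a piecewise constant approximation of $x$, together with a careful passage to the limit that exploits the continuity of $f,\partial_t f,\partial_x f,\partial_{xx} f$ under $d_\infty$ (guaranteed by $f\in\mathcal{C}^{1,2}(\hat\Lambda^n)$).

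First, I would fix $t\in[0,T]$ and a sequence of partitions $\pi_N=\{0=t_0^N<t_1^N<\cdots<t_N^N=t\}$ with mesh $|\pi_N|\downarrow 0$, and replace the continuous semimartingale $x$ by its c\`adl\`ag piecewise constant approximation $x^N$, defined by $x^N_s=x_{t_i^N}$ for $s\in[t_i^N,t_{i+1}^N)$. Since $x$ is continuous, $X^N\to X$ in $d_\infty$ uniformly on $[0,t]$ $\mathbb{P}$-a.s., so by continuity of $f$ one has $f(X^N_t)\to f(X_t)$ a.s., and similarly for the derivative functionals along the partition times.

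Second, I would telescope $f(X^N_t)-f(X_0)=\sum_{i=0}^{N-1}\bigl[f(X^N_{t_{i+1}})-f(X^N_{t_i})\bigr]$ and split each summand through the intermediate path $X^N_{t_i,\,\delta t_i}$ (flat extension of $X^N_{t_i}$ by $\delta t_i:=t_{i+1}^N-t_i^N$):
\begin{align*}
f(X^N_{t_{i+1}})-f(X^N_{t_i})
&=\underbrace{\bigl[f(X^N_{t_i,\delta t_i})-f(X^N_{t_i})\bigr]}_{\text{horizontal}}
+\underbrace{\bigl[f(X^N_{t_{i+1}})-f(X^N_{t_i,\delta t_i})\bigr]}_{\text{vertical}}.
\end{align*}
The horizontal term is a deterministic time increment of the functional $r\mapsto f(X^N_{t_i,r})$ on $[0,\delta t_i]$; by the definition of $\partial_t f$ and the mean-value theorem, it equals $\int_{t_i^N}^{t_{i+1}^N}\partial_t f(X^N_{t_i,\,s-t_i^N})\,ds$, which contributes $\int_0^t \partial_t f(X_r)\,dr$ in the limit after invoking continuity of $\partial_t f$ and uniform convergence $X^N\to X$.

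Third, the vertical term is a pure bump: $X^N_{t_{i+1}}$ coincides with $X^N_{t_i,\delta t_i}$ except at the endpoint where its value is $x_{t_i}+\Delta_i x$ with $\Delta_i x:=x_{t_{i+1}}-x_{t_i}$; equivalently it is the vertical perturbation $(X^N_{t_i,\delta t_i})^{(\Delta_i x)}$. Applying a classical finite-dimensional second-order Taylor expansion to the map $h\mapsto f\bigl((X^N_{t_i,\delta t_i})^{(h)}\bigr)$ at $h=0$ yields
\begin{align*}
f(X^N_{t_{i+1}})-f(X^N_{t_i,\delta t_i})
=\partial_x f(X^N_{t_i,\delta t_i})\,\Delta_i x
+\tfrac12 \langle \partial_{xx}f(X^N_{t_i,\delta t_i})\Delta_i x,\Delta_i x\rangle
+R_i,
\end{align*}
with $R_i=o(|\Delta_i x|^2)$ uniformly by continuity of $\partial_{xx}f$. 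Summing over $i$, the first sum converges in probability to the It\^o integral $\int_0^t \partial_x f(X_r)\,dx_r$ by the standard Riemann approximation of stochastic integrals of a continuous adapted integrand with respect to a continuous semimartingale; the second sum converges to $\tfrac12\int_0^t \mathrm{Tr}\bigl(\partial_{xx}f(X_r)\,d\langle x\rangle_r\bigr)$ by convergence of the realized quadratic variation to $\langle x\rangle$; and the remainder vanishes since $\sum_i|\Delta_i x|^2\to\langle x\rangle_t$ while $\max_i|\Delta_i x|\to 0$ by uniform continuity of $x$ on $[0,t]$.

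The main obstacle, and the step that requires the most care, is the passage to the limit in the three sums simultaneously with the localization needed to control the terms pathwise. Concretely, one should first localize via a stopping time $\tau_M:=\inf\{s\ge 0:|x_s|+\langle x\rangle_s\ge M\}\wedge t$ so that $\partial_x f,\partial_{xx} f$ evaluated along $X^N_{t_i,\delta t_i}$ remain in a fixed compact set (using continuity of the derivatives and relative compactness of $\{X^N_{t_i,\delta t_i}\}_i$ in $(\hat\Lambda^n,d_\infty)$ on the event $\{t\le\tau_M\}$), derive the formula up to $\tau_M$, and then let $M\to\infty$. Once this localization is in place, the convergence of the two stochastic sums is standard and the a.s.\ identification $f(X_t)=f(X_0)+\int_0^t\partial_t f(X_r)\,dr+\int_0^t\partial_x f(X_r)\,dx_r+\tfrac12\int_0^t\partial_{xx}f(X_r)\,d\langle x\rangle_r$ follows.
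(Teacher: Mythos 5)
The paper never proves Lemma \ref{Lemma_0_7}: it is quoted directly from the functional It\^o calculus literature (\cite{Cont_JFA_2010, Cont_AOP_2013, Dupire_2009}), so there is no internal proof to compare against, and your proposal correctly reconstructs exactly the standard Dupire--Cont--Fourni\'e argument that those references (and hence the paper, implicitly) rely on: piecewise-constant approximation along refining partitions, splitting each increment into a horizontal part handled via $\partial_t f$ and the fundamental theorem of calculus and a vertical part handled by a second-order Taylor expansion in the bump variable, then passage to the limit of the Riemann sums, realized quadratic variation, and remainders. The one point worth making explicit is that the class $\mathcal{C}^{1,2}(\hat{\Lambda}^n)$ used here assumes only continuity of $\partial_t f,\partial_x f,\partial_{xx} f$ under $d_{\infty}$ (not the boundedness-preserving property imposed in \cite{Cont_AOP_2013}), so your localization step --- stopping $x$ and noting that, for continuous $x$, the frozen and bumped approximating paths all lie in a small $d_{\infty}$-neighborhood of the compact set $\{X_{s,r}\}$, giving uniform bounds and uniform continuity of the derivative functionals there --- is genuinely needed to control the Taylor remainder and to justify the stochastic dominated-convergence passage for the integrand sums (whose integrands depend on $N$, so the plain Riemann-sum theorem alone does not suffice); as sketched, that is the correct fix and the proof goes through.
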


\section{Problem Formulation}\label{Section_2}

This section provides the precise problem formulation of state and control path-dependent SZSDGs. The state and control path-dependent problem was first introduced in \cite{Saporito_SICON_2019} to solve the stochastic control problem and (nonzero-sum) differential game. On the other hand, we study the (state and control path-dependent) problem in the SZSDG framework.


Let $\mathcal{U}$ be the set of $\mathrm{U}$-valued $\mathcal{F}$-progressively measurable and c\`adl\`ag processes, where $\mathrm{U} \subset \mathbb{R}^{m}$, which is the set of control processes for Player 1. The set of control processes for Player 2, $\mathcal{V}$, is defined similarly with $\mathrm{V} \subset \mathbb{R}^{l}$. It is assumed that $\mathrm{U}$ and $\mathrm{V}$ are compact metric spaces with the standard Euclidean norm. The precise definitions of $\mathcal{U}$ and $\mathcal{V}$ are given later.

The state and control path-dependent stochastic differential equation (SDE) is given by
\begin{align}
\label{eq_2_1}
\begin{cases}
	\dd x_s^{t,A_t;U,V} = f(X_s^{t,A_t;U,V},U_s,V_s)\dd s + \sigma(X_s^{t,A_t;U,V},U_s,V_s)\dd B_s,~ s \in (t,T] \\
	X_t^{t,A_t;U,V} = A_t \in \Lambda_t^n,
\end{cases}
\end{align}
where $X_s^{t,A_t;U,V} := \{x_r^{t,A_t;U,V} \in \mathbb{R}^n,~ r \in [0,s] \} \in \Lambda_s^n$ is the whole path of the controlled state process from time $0$ to $s$, and $U_s := \{u_r \in  \mathrm{U},~ r \in [0,s]; ~u \in \mathcal{U}\} \in \hat{\Lambda}_s^{\mathrm{U}} \subset \hat{\Lambda}_s^m$ and $V_s := \{v_r \in  \mathrm{V},~ r \in [0,s];~ v \in \mathcal{V}\} \in \hat{\Lambda}_s^{\mathrm{V}} \subset \hat{\Lambda}_s^l $ are paths of the control processes of Players 1 and 2, respectively. In (\ref{eq_2_1}), $A_t \in \Lambda_t^n$ is the initial condition that is a continuous path starting from time $t=0$. Let $\Lambda := \Lambda^n$ and $\hat{\Lambda} := \hat{\Lambda}^{\mathrm{U}} \times \hat{\Lambda}^{\mathrm{V}}$.

The state and control path-dependent backward stochastic differential equation (BSDE) is given by
\begin{align}
\label{eq_2_2}
\begin{cases}
\dd y_s^{t,A_t;U,V} = - l(X_s^{t,A_t;U,V},y_s^{t,A_t;U,V},q_s^{t,A_t;U,V}, U_s,V_s)\dd s \\
~~~~~~~~~~~~~~~~ + q_s^{t,A_t;U,V} \dd B_s,~ s \in [t,T)	\\
y_T^{t,A_t;U,V} = m(X_T^{t,A_t;U,V}),
\end{cases}
\end{align}
where the pair $(y_s^{t,A_t;U,V},q_s^{t,A_t;U,V}) \in \mathbb{R} \times \mathbb{R}^{1 \times p}$ is the solution of the BSDE. Note that the BSDE in (\ref{eq_2_2}) is coupled with the (forward) SDE in (\ref{eq_2_1}). Below, the BSDE in (\ref{eq_2_2}) is used to define the objective functional of Players 1  and 2.

We introduce the following assumption:
\begin{assumption}\label{Assumption_1}
In (\ref{eq_2_1}), the coefficients $f:\Lambda \times \hat{\Lambda}  \rightarrow \mathbb{R}^n$ and $\sigma: \Lambda \times \hat{\Lambda} \rightarrow \mathbb{R}^{n \times p}$ are bounded. Furthermore, the running and terminal costs in (\ref{eq_2_2}), $l:\Lambda \times \mathbb{R} \times \mathbb{R}^{1 \times p} \times \hat{\Lambda} \rightarrow \mathbb{R}$ and $m:\Lambda_T \rightarrow \mathbb{R}$, respectively, are bounded. There exists a constant $L > 0$ such that for $s_i \in [0,T]$ and $(X_T^i, U_T^i, V_T^i, y_i, q_i) \in \Lambda_T \times \hat{\Lambda}_T \times \mathbb{R} \times \mathbb{R}^{1 \times p}$, $i=1,2$, the following conditions hold: 
\begin{align*}	
& |f(X_{s_1}^{1},U_{s_1}^{1},V_{s_1}^{1}) - f(X_{s_2}^{2},U_{s_2}^{2},V_{s_2}^{2})| \\
& \leq L (d_{\infty}(X_{s_1}^{1},X_{s_2}^{2}) + d_{\infty}(U_{s_1}^{1}, U_{s_2}^{2}) + d_{\infty}(V_{s_1}^{1}, V_{s_2}^2)) \\
& |\sigma(X_{s_1}^{1},U_{s_1}^{1},V_{s_1}^{1}) - \sigma(X_{s_2}^{2},U_{s_2}^{2},V_{s_2}^{2})| \\
& \leq L (d_{\infty}(X_{s_1}^{1},X_{s_2}^{2}) + d_{\infty}(U_{s_1}^{1}, U_{s_2}^{2}) + d_{\infty}(V_{s_1}^{1}, V_{s_2}^2)) \\
& | l(X_{s_1}^{1},y_1,q_1,U_{s_1}^{1},V_{s_1}^{1}) - l(X_{s_2}^{2},y_2,q_2,U_{s_2}^{2},V_{s_2}^{2})|  \\
& \leq L (d_{\infty}(X_{s_1}^{1},X_{s_2}^{2}) + d_{\infty}(U_{s_1}^{1},U_{s_2}^{2}) + d_{\infty} (V_{s_1}^{1},V_{s_2}^{2}) + |y_1 - y_2| + |q_1 - q_2|) \\
& |m(X_T^{1}) - m(X_T^{2}) | \leq L \|X_T^{1} - X_T^{2}\|_{\infty}.
\end{align*}
\end{assumption}

Based on \cite{Buckdahn_SICON_2008, Li_SICON_2014, Tang_DCD_2015, Touzi_Book, Zhang_SICON_2017, Zhang_book_2017}, we have the following result:
\begin{lemma}\label{Lemma_1}
Suppose that Assumption \ref{Assumption_1} holds. Then, the following hold:
\begin{enumerate}[(i)]
\item For $t \in [0,T)$, $A_t \in \Lambda_t$ and $(U,V) \in \hat{\Lambda}$, the SDE in (\ref{eq_2_1}) and the BSDE in (\ref{eq_2_2}) admit unique strong solutions, $X^{t,A_t;U,V} $ with $\mathbb{E}[\|X_T^{t,A_t;U,V}\|^2_{\infty} | \mathcal{F}_t ] < \infty$ and $(y^{t,A_t;U,V},z^{t,A_t;U,V}) \in \mathcal{C}_{\mathbb{F}}([t,T],\mathbb{R}^n) \times \mathcal{L}_{\mathbb{F}}^2([t,T],\mathbb{R}^{1 \times p})$, respectively.
\item For, $t \in [0,T)$, $t_1,t_2 \in [t,T]$ with $t_2 \geq t_1$, $A_t^{i} \in \Lambda_t$, and $(U^i, V^i) \in \hat{\Lambda}$, $i=1,2$, there exists a constant $C > 0$, dependent on the Lipschitz constant $L$ in Assumption \ref{Assumption_1}, such that
\begin{align*}
& \mathbb{E} \Bigl [ \| X_T^{t,A_t^{1}; U^1,V^1} \|_\infty^2 \bigl | \mathcal{F}_t \Bigr ] \leq C (1 + \| A_t^1 \|_{\infty}^2 ) \\
& \mathbb{E} \Bigl [ \| X_{t_2}^{t,A_t^{1}; U^1,V^1} - A_{t_1,t_2-t_1} \|_\infty^2 \bigl | \mathcal{F}_t \Bigr ] \leq C (1 + \| A_{t_1}^1 \|_{\infty}^2 ) (t_2 - t_1)\\
& \mathbb{E} \Bigl [ \| X_T^{t,A_t^{1}; U^1,V^1} - 	X_T^{t,A_t^{2}; U^2,V^2} \|_\infty^2 \bigl | \mathcal{F}_t \Bigr ]   \\
&~~~ \leq  C \|A_t^{1} - A_t^{2} \|_{\infty} + C \mathbb{E} \Bigl [ \int_t^T \bigl [ \| U_r^{1} - U_r^{2} \|_{\infty}^2 + \| V_r^{1} - V_r^{2} \|_{\infty}^2  \bigr ] \dd r \bigl | \mathcal{F}_t \Bigr ].
\end{align*}
\item For, $t \in [0,T)$, $t_1,t_2 \in [t,T]$ with $t_2 \geq t_1$, $A_t^{i} \in \Lambda_t$, and $(U^i, V^i) \in \hat{\Lambda}$, $i=1,2$, there exists a constant $C > 0$, dependent on the Lipschitz constant $L$ in Assumption \ref{Assumption_1}, such that
\begin{align*}	
& \mathbb{E} \Bigl [ \sup_{s \in [t,T]} |y_s^{t,A_t^1,U^1,V^1} |^2   + \int_t^T |q_r^{t,A_t^1,U^1,V^1} |^2  \dd r \bigl |  \mathcal{F}_t \Bigr ] \leq C (1 + \|A_t^1\|_{\infty}^2 )
\\
& \mathbb{E} \Bigl [ \sup_{s \in [t_1,t_2]} |y_s^{t_1,A_{t_1}^1,U^1,V^1} - y_{t_1}^{t_1,A_{t_1}^1,U^1,V^1} |^2 \bigl | \mathcal{F}_t \Bigr ] \leq C (1 + \|A_t^1\|_{\infty}^2 )(t_2 - t_1)
\\
& \mathbb{E} \Bigl [ \sup_{s \in [t,T]} |y_s^{t,A_t^1,U^1,V^1} - y_s^{t,A_t^2,U^2,V^2} |^2   \bigl | \mathcal{F}_t \Bigr ]\\
&~~~ \leq C \| A_t^1 - A_t^2 \|_{\infty}^2 + C \mathbb{E} \Bigl [ \int_t^T \bigl [ \| U_r^{1} - U_r^{2} \|_{\infty}^2 + \| V_r^{1} - V_r^{2} \|_{\infty}^2  \bigr ] \dd r \bigl | \mathcal{F}_t \Bigr ].
\end{align*}
\item Suppose that $l^{(1)}$ and $l^{(2)}$ are coefficients of the BSDE in (\ref{eq_2_2}) satisfying Assumption \ref{Assumption_1}, and $\eta^{(1)}, \eta^{(2)} \in \mathcal{L}^2(\Omega,\mathcal{F}_T,\mathbb{R})$ are the corresponding terminal conditions. Let $(y^{(1)}, q^{(1)})$ and $(y^{(2)}, q^{(2)})$ be solutions of the BSDE in (\ref{eq_2_2}) with $(l^{(1)}, \eta^{(1)})$ and $(l^{(2)}, \eta^{(2)})$, respectively (note that $y_T^{(1)} = \eta^{(1)}$ and $y_T^{(2)} = \eta^{(2)}$). If $\eta^{(1)} \geq \eta^{(2)}$ and $l^{(1)} \geq l^{(2)}$, then $y_s^{(1)} \geq y_s^{(2)}$, a.s., for $ s \in [t,T]$.
\end{enumerate}
\end{lemma}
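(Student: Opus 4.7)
The plan is to treat this as an adaptation of classical SDE/BSDE theory to the state- and control-path-dependent setting. For part (i), I would first establish well-posedness of the SDE (\ref{eq_2_1}) by Picard iteration on the Banach space $\mathcal{C}_{\mathbb{F}}([t,T],\mathbb{R}^n)$. The key observation is that under Assumption \ref{Assumption_1}, for two candidate solutions $X^1,X^2$ with the same initial path $A_t$ and controls $(U,V)$, one has $d_{\infty}(X_s^1,X_s^2) = \sup_{r \in [t,s]} |x_r^1 - x_r^2|$, so the path-space Lipschitz bound reduces to a pointwise one and the standard contraction argument applies. Uniqueness and the conditional square-integrability follow as in \cite{Tang_DCD_2015, Touzi_Book}. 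With $X^{t,A_t;U,V}$ in hand, the BSDE (\ref{eq_2_2}) becomes a standard Lipschitz BSDE in $(y,q)$ with bounded (hence square-integrable) terminal data $m(X_T^{t,A_t;U,V})$ and driver $l(X^{t,A_t;U,V}_s,\cdot,\cdot,U_s,V_s)$, so Pardoux--Peng theory delivers a unique solution in $\mathcal{C}_{\mathbb{F}}([t,T],\mathbb{R}) \times \mathcal{L}^2_{\mathbb{F}}([t,T],\mathbb{R}^{1 \times p})$.

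For part (ii), the a priori moment bound on $\|X_T^{t,A_t^1;U^1,V^1}\|_{\infty}$ follows by applying It\^o's formula to $|x|^2$, taking the $\mathcal{F}_t$-conditional expectation, using boundedness of $f,\sigma$ (which makes the effective linear growth uniform), and Gr\"onwall. The flat-extension estimate uses that between $t_1$ and $t_2$ only the drift and diffusion integrals contribute, which are $O(t_2-t_1)$ and $O(\sqrt{t_2-t_1})$ respectively in $L^2$ by BDG. The stability estimate in the third inequality of (ii) comes from subtracting the two SDEs, invoking the path-Lipschitz bound in Assumption \ref{Assumption_1}, and applying Gr\"onwall. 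For part (iii), the moment bounds on $(y,q)$ follow from applying It\^o to $e^{\beta s}|y_s|^2$ for sufficiently large $\beta$, using $2ab \leq \beta^{-1}a^2 + \beta b^2$ to absorb the quadratic-in-$q$ contribution of the driver, together with the moment bound on $X$ from (ii). The short-time estimate for $|y_s - y_{t_1}|^2$ comes by writing $y_s - y_{t_1} = \int_{t_1}^s l(\cdot)\,\dd r - \int_{t_1}^s q_r\,\dd B_r$ and applying BDG plus the already-established $L^2$ bound on $q$. The stability estimate follows by viewing the difference $y^{(1)} - y^{(2)}$ as the solution of a linear BSDE with bounded coefficients and a source term involving the differences in $X$, $U$, and $V$, then invoking the standard BSDE stability estimate combined with part (ii).

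Part (iv) is Peng's classical comparison theorem for Lipschitz BSDEs: linearize by writing $l^{(1)}(\cdot, y^{(1)}, q^{(1)}) - l^{(2)}(\cdot, y^{(2)}, q^{(2)}) = \alpha_s (y^{(1)}-y^{(2)}) + \beta_s (q^{(1)}-q^{(2)}) + \Delta_s$ with $|\alpha_s|,|\beta_s| \leq L$ and $\Delta_s := l^{(1)}(\cdot,y^{(2)},q^{(2)}) - l^{(2)}(\cdot,y^{(2)},q^{(2)}) \geq 0$, then represent $y^{(1)}_s - y^{(2)}_s$ by the exponential/Girsanov formula and conclude from nonnegativity of both the terminal gap and $\Delta$. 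The main technical obstacle I anticipate is not any single estimate but the careful bookkeeping required by the path-space Lipschitz hypothesis: Assumption \ref{Assumption_1} measures distances between paths of possibly different lengths via $d_{\infty}$ with flat extension, so when comparing $X^{t,A_t^1;U^1,V^1}_{t_2}$ with $A_{t_1, t_2 - t_1}$ one must track which extension of which path enters each term, and when transferring the pathwise Lipschitz bound into $L^2$ bounds conditional on $\mathcal{F}_t$ one needs to verify that the $d_{\infty}(U^1_r,U^2_r)$ terms can be controlled by $\|U^1_r - U^2_r\|_{\infty}$ uniformly on $[t,T]$. Once that bookkeeping is handled, each assertion reduces to a classical computation.
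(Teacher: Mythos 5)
The paper itself offers no proof of this lemma; it is stated as a consequence of the cited literature (Buckdahn--Li, Li--Wei, Tang--Zhang, Touzi, Zhang's book), and your overall route — Picard iteration on path space for (\ref{eq_2_1}), Pardoux--Peng plus standard a priori/stability estimates for (\ref{eq_2_2}), and Peng's linearization--Girsanov argument for the comparison in (iv) — is exactly the standard machinery those references rely on. Parts (i), (ii), (iv), and the first and third estimates in (iii) are fine as you sketch them (your observation that a common initial path reduces $d_{\infty}$ to the running supremum on $[t,s]$ is the right bookkeeping point; the un-squared $\|A_t^1-A_t^2\|_{\infty}$ in (ii) versus the squared norm your Gr\"onwall argument produces is a typo-level discrepancy in the statement, not an issue with your argument).

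There is, however, a genuine gap in your treatment of the second estimate in (iii). You propose to write $y_s-y_{t_1}=\int_{t_1}^{s}l\,\dd r-\int_{t_1}^{s}q_r\,\dd B_r$ and use \textquotedblleft BDG plus the already-established $L^2$ bound on $q$.\textquotedblright\ BDG converts the martingale part into $\mathbb{E}\bigl[\int_{t_1}^{t_2}|q_r|^2\,\dd r\,\bigl|\,\mathcal{F}_t\bigr]$, but the bound you have established controls only $\mathbb{E}\bigl[\int_{t}^{T}|q_r|^2\,\dd r\,\bigl|\,\mathcal{F}_t\bigr]\leq C(1+\|A_t^1\|_\infty^2)$, which carries no smallness as $t_2-t_1\downarrow 0$; only the drift term acquires the factor $(t_2-t_1)$ via Cauchy--Schwarz. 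To close the argument you need an almost-everywhere-in-time bound such as $\mathbb{E}[|q_r|^2\mid\mathcal{F}_t]\leq C(1+\|A_t^1\|_\infty^2)$, which in the Markovian case comes from the representation $q_r=\partial_x u\,\sigma$ with $u$ Lipschitz, but in the present state- and control-path-dependent setting requires a separate argument (Malliavin-calculus or approximation/BMO-type reasoning exploiting that $m$ and $l$ are bounded, Lipschitz functionals of the forward path), and this is precisely where the estimate stops being a \textquotedblleft classical computation\textquotedblright: for a completely arbitrary progressively measurable control the $q$-mass of the martingale $\mathbb{E}[\,\cdot\mid\mathcal{F}_s]$ can concentrate on the short interval $[t_1,t_2]$ (the control after $t_2$ may depend heavily on the Brownian increment over $[t_1,t_2]$), so some structural input is unavoidable. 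It is worth noting that where the paper actually needs short-time control of the BSDE (the term $L^{(3)}$ in the proof of Lemma \ref{Lemma_3}), it sidesteps this issue entirely by Cauchy--Schwarz plus the global bound on $\int_t^T|q_r|^2\,\dd r$, settling for the rate $\tau^{1/2}$ rather than $\tau$.
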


The objective functional of Players 1 and 2 is given by
\begin{align}
\label{eq_2_3}
J(t,A_t;U,V) = 	y_t^{t,A_t;U,V},~ t \in [0,T],
\end{align}
where $y$ is the first component of the BSDE in (\ref{eq_2_2}). Note that $J(T,A_t;U,V) = y_T^{t,A_t;U,V} = m(X_T^{t,A_t;U,V})$.

\begin{remark}
When $l$ in (\ref{eq_2_2}) is independent of $y$ and $q$, (\ref{eq_2_3}) becomes
\begin{align*}
J_t(t,A_t;U,V) & = \mathbb{E} \Bigl [ \int_t^T l(X_s^{t,A_t;U,V}, U_s,V_s)\dd s + m(X_T^{t,A_t;U,V}) \bigl | \mathcal{F}_t \Bigr ].
\end{align*}	
\end{remark}

The admissible control of Players 1 and 2 is defined as follows:
\begin{definition}\label{Definition_1}
For $t \in [0,T]$, the admissible control for Player 1 (respectively, Player 2) is defined such that $u := \{u_r \in \mathrm{U},~ r \in [t,T]\}$ (respectively, $v := \{v_r \in \mathrm{V},~ r \in [t,T]\}$) is an $\mathrm{U}$-valued (respectively, $\mathrm{V}$-valued) $\mathbb{F}$-progressively measurable and c\`adl\`ag process in $\mathcal{L}_{\mathbb{F}}^2([t,T],\mathrm{U})$ (respectively, $\mathcal{L}_{\mathbb{F}}^2([t,T],\mathrm{V})$). The set of admissible controls of Player 1 (respectively, Player 2) is denoted by $\mathcal{U}[t,T]$ (respectively, $\mathcal{V}[t,T]$). We identify two admissible control processes of Player 1 (respectively, Player 2) $u$ and $\bar{u}$ in $\mathcal{U}[t,T]$ (respectively, $v$ and $\bar{v}$ in $\mathcal{V}[t,T]$) and write $u \equiv \bar{u}$ (respectively, $v \equiv \bar{v}$) on $[t,T]$, if $\mathbb{P}(u = \bar{u}~ \text{a.e}~ \text{in}~ [t,T] ) = 1$ (respectively, $\mathbb{P} (v = \bar{v}~ \text{a.e}~ \text{in}~ [t,T] ) = 1$). 
\end{definition}

Given the definition of the admissible controls for Players 1 and 2, we introduce the concept of \emph{nonanticipative strategies} for Players 1 and 2.

\begin{definition}\label{Definition_2}
For $t \in [0,T]$, a nonanticipative strategy for Player 1 (respectively, Player 2) is a 
mapping $\alpha:\mathcal{V}[t,T] \rightarrow \mathcal{U}[t,T]$ (respectively, $\beta:\mathcal{V}[t,T] \rightarrow \mathcal{U}[t,T]$) such that for any $\mathbb{F}$-stopping time $\tau:\Omega \rightarrow [t,T]$ and any $u^{1}, u^{2} \in \mathcal{U}$ with $u^{1} \equiv u^{2}$ on $[t,\tau]$ (respectively, $v^{1}, v^{2} \in \mathcal{V}$ with $v^{1} \equiv v^{2}$ on $[t,\tau]$), it holds that $\alpha(u^{1}) \equiv \alpha(u^{2})$ on $[t,\tau]$ (respectively, $\beta(u^{1}) \equiv \beta(u^{2})$ on $[t,\tau]$). The set of admissible strategies for Player 1 (respectively, Player 2) is denoted by $\mathcal{A}[t,T]$ (respectively, $\mathcal{B}[t,T]$).  
\end{definition}

The following notation captures control path-dependent SZSDGs: for $t \in [0,T)$,
\begin{align}
\label{eq_2_4}
(Z_{t} \otimes u)[s] & := 
\begin{cases}
 	z_s, &\text{if}~ s  \in [0,t) \\
 	u_s, & \text{if}~ s \in [t,T], \\
 \end{cases}~~~
 (W_{t} \otimes v)[s] := 
\begin{cases}
 	w_s, &\text{if}~ s  \in [0,t) \\
 	v_s, & \text{if}~ s \in [t,T], \\
 \end{cases}	
\end{align}
where $Z_t := \{z_r,~ r \in [0,t] \}\in \hat{\Lambda}_t^{\mathrm{U}}$, $u \in \mathcal{U}[t,T]$, $W_t := \{w_r,~ r \in [0,t] \} \in \hat{\Lambda}_t^{\mathrm{V}}$, and $v \in \mathcal{V}[t,T]$. Note that 
$(Z_{t} \otimes u) \in \mathcal{U}[0,T]$ and $(W_{t} \otimes v) \in \mathcal{V}[0,T]$.

With the help of the notation in (\ref{eq_2_4}), the objective functional of (\ref{eq_2_3}) that includes the \emph{path} of the control of Players 1 and 2 can be written as follows:
\begin{align}
\label{eq_2_5}
J(t,A_t; Z_{t} \otimes u, W_{t} \otimes v) = y_t^{t,A_t; Z_{t} \otimes u, W_{t} \otimes v}.
\end{align}
Then for $(t,A_t) \in [0,T] \times \Lambda_t$ and $(Z_t,W_t) \in \hat{\Lambda}_t$, the lower value functional of (\ref{eq_2_5}) for the state and control path-dependent SZSDG can be defined by
\begin{align}
\label{eq_2_6}
\mathbb{L}(A_t; Z_t, W_t) &= \essinf_{\alpha \in \mathcal{A}[t,T]} \esssup_{v \in \mathcal{V}[t,T]} 	J(t,A_t; Z_{t} \otimes \alpha (W_t \otimes v), W_t \otimes v) \\
& = \essinf_{\alpha \in \mathcal{A}[t,T]} \esssup_{v \in \mathcal{V}[t,T]} 	J(t,A_t; Z_{t} \otimes \alpha (v), W_t \otimes v), \nonumber
\end{align}
where the last equality follows from (\ref{eq_2_4}). 
Moreover, for $(t,A_t) \in [0,T] \times \Lambda_t$ and $(Z_t,W_t) \in \hat{\Lambda}_t$, the upper value functional  of (\ref{eq_2_5}) is defined by
\begin{align}
\label{eq_2_7}	
\mathbb{U}(A_t; Z_t, W_t) &=  \esssup_{\beta \in \mathcal{B}[t,T]} \essinf_{u \in \mathcal{U}[t,T]} 	J(t,A_t; Z_{t} \otimes u, W_t \otimes \beta (Z_{t} \otimes u)) \\
& = \esssup_{\beta \in \mathcal{B}[t,T]} \essinf_{u \in \mathcal{U}[t,T]} 	J(t,A_t; Z_{t} \otimes u, W_t \otimes \beta (u)). \nonumber
\end{align}
Note that $\mathbb{L}(A_T; Z_T, W_T) = \mathbb{U}(A_T; Z_T, W_T) = m(A_T)$.

We state some remarks on various formulations of (path-dependent) SZSDGs.

\begin{remark}\label{Remark_3_6}
\begin{enumerate}[(1)]
\item One might formulate SZSDGs with control against control, in which the players can select admissible controls individually. Although this formulation is quite similar to stochastic optimal control and therefore can define the saddle-point equilibrium, the dynamic programming principle cannot be established and the value of the game may fail to exist; see \cite[Appendix E]{Pham_SICON_2014} and \cite[Example 2.1]{Possamai_arXiv_2018}. Note that under this formulation, the necessary condition for the existence of the saddle-point equilibrium in terms of the (stochastic) maximum principle was studied in \cite{Moon_TAC_Risk_2019}.
\item The notion of nonanticipative strategies in Definition \ref{Definition_2} is used in various zero-sum differential games; see \cite{Basar2, Bayraktar_2013_SICON, Buckdahn_Cardaliaguet_DGAA_2011, Buckdahn_SICON_2004, Buckdahn_SICON_2008, Fleming_1989,  Li_AMO_2015, YU_SICON_2015, Zhang_SICON_2017}. This is the strong formulation with strategy against control. Under this formulation, it is possible to  establish the dynamic programming principle, to show the existence of viscosity solutions of Hamilton-Jacobi-Isaacs (HJI) equations, and to identify the existence of the game value under the Isaacs condition. We also note that instead of the strong formulation with strategy against control, one can use the notion of \emph{nonanticipative strategy with delay}, which is still asymmetric information between the players that allows to show the existence of the (approximated) saddle-point equilibrium and the game value \cite{Buckdahn_SICON_2004, Cardaliaguet_IGTR_2008, Zhang_SICON_2017}.
\item Instead of the strong formulation with strategy against control, SZSDGs can be considered in weak formulation \cite{Karoui_2003, Hamadene_SSR_1995, Li_MIn_SICON_2016, Pham_SICON_2014, Possamai_arXiv_2018}. Note that in \cite{Pham_SICON_2014, Possamai_arXiv_2018}, the players are restricted to observe the state feedback information. Since the information is symmetric, it is convenient to define the saddle-point equilibrium and show the existence of the game value. The dynamic programming principle can also be obtained. Note that the notion of viscosity solutions of the HJI equation requires the nonlinear expectation and some additional assumptions are required to show the existence and uniqueness of viscosity solutions in the sense of \cite{Ekren_AP_2014, Ekren_AP_2016_Part_I, Ekren_AP_2016}; see \cite[Remark 3.7 and p. 10]{Possamai_arXiv_2018} and \cite[Section 6]{Pham_SICON_2014}.
\end{enumerate}
\end{remark}

The next remark is on the (lower and upper) value functionals.
\begin{remark}\label{Remark_3_7}
We can see that the value functionals in (\ref{eq_2_6}) and (\ref{eq_2_7}) depend on initial path of both state and control of the players. Consider the situation when the path-dependence is only in the state variable, i.e., $f:\Lambda \times \mathrm{U} \times \mathrm{V} \rightarrow \mathbb{R}^n$, $\sigma :\Lambda \times \mathrm{U} \times \mathrm{V} \rightarrow \mathbb{R}^{n \times p}$, and $l :\Lambda \times \mathbb{R} \times \mathbb{R}^{1 \times p} \times \mathrm{U} \times \mathrm{V} \rightarrow \mathbb{R} $. Then, the value functionals can be written independent of $Z$ and $W$:
\begin{align*}
\mathbb{L}(A_t) & = \essinf_{\alpha \in \mathcal{A}[t,T]} \esssup_{v \in \mathcal{V}[t,T]} 	J(t,A_t; \alpha (v), v) \\
\mathbb{U}(A_t) &= \esssup_{\beta \in \mathcal{B}[t,T]} \essinf_{u \in \mathcal{U}[t,T]} 	J(t,A_t; u, \beta (u)).
\end{align*}
This is a special case of the SZSDG in this paper, which was studied in \cite{Zhang_SICON_2017}. In addition, for the state and control path-independent case, i.e., the SZSDG in the Markovian formulation, the value functionals are reduced by
\begin{align*}
\mathbb{L}(t,x) & = \essinf_{\alpha \in \mathcal{A}[t,T]} \esssup_{v \in \mathcal{V}[t,T]} 	J(t,x; \alpha (v), v) \\
\mathbb{U}(t,x) &= \esssup_{\beta \in \mathcal{B}[t,T]} \essinf_{u \in \mathcal{U}[t,T]} 	J(t,x; u, \beta (u)),
\end{align*}
for any initial state $x \in \mathbb{R}^n$ and $t \in [0,T]$; see \cite{Buckdahn_SICON_2008, Fleming_1989} and the references therein.
\end{remark}

\begin{remark}
As mentioned in \cite{Saporito_SICON_2019}, one main example of the SZSDG considered in this paper is the delay problem with delay $r > 0$, where the SDE is given by
\begin{align*}
\begin{cases}
\dd {x}_s^{t,\bar{x};u,v} = \bigl [ f_1(x_s^{t,\bar{x};u,v}) + f_2(u_s, u_{s-r},v_s, v_{s-r}) \bigr ] \dd s + \sigma(x_s^{t,\bar{x};u,v}) \dd B_s,~ s \in (t,T] \\
x_s^{t,\bar{x};u,v} = \bar{x},~u_s = \bar{u}_s,~ v_s = \bar{v}_s,~ s \in [t-r,t],
\end{cases}
\end{align*}
and the objective functional is
\begin{align*}
\begin{cases}
\dd y_s^{t,\bar{x};u,v} = - \bigl [ l_1(x_s^{t,\bar{x};u,v},y_s^{t,\bar{x};u,v},q_s^{t,\bar{x};u,v}) + l_2(u_s, u_{s-r},v_s, v_{s-r}) \bigr ] \dd s \\
~~~~~~~~~~~~~~~~ + q_s^{t,\bar{x};u,v} \dd B_s,~ s \in [t,T)	\\
y_T^{t,\bar{x}_t;u,v} = m(x_T^{t,\bar{x};u,v}).
\end{cases}	
\end{align*}
Note the initial path of $u$ and $v$. Stochastic control problems and differential games with delay can be solved by infinite-dimensional approaches \cite{Chen_2015, Gozzi_SICON_2017_1, Gozzi_SICON_2017}.\footnote{Note that \cite{Gozzi_SICON_2017_1, Gozzi_SICON_2017} considered the one-player stochastic control problem with delay. Of course, it is interesting to study the approach of \cite{Gozzi_SICON_2017_1, Gozzi_SICON_2017} in the SZSDG framework.} However, their approaches are applicable only to the delay-type problem and cannot be used to solve the general path-dependent problem. We also note that the path-dependent problem can be converted into the stochastic control problem (or differential game) with \emph{random coefficients}; see \cite[Example 4.5]{Tang_DCD_2015} and \cite{Qui_SICON_2018}. More applications of control problems  and differential games with delay and random coefficients can be found in \cite{Carmona_JOTA_2018, Fabbri_Book, Qui_SICON_2018, Moon_TAC_2020_Random} and the references therein, where their problems can be handled by the path-dependent analysis studied in this paper.
\end{remark}

\section{Dynamic Programming Principle}\label{Section_3}

This section establishes the dynamic programming principle for the lower and upper value functionals. 

We first state properties of the value functionals. The proof of the following result is similar to that in \cite[Proposition 3.3]{Buckdahn_SICON_2008} and \cite[Proposition 3.3]{Li_SICON_2014}.

\begin{proposition}\label{Proposition_1}
Assume that Assumption \ref{Assumption_1} holds. The lower and upper value functionals $\mathbb{L}$ and $\mathbb{U}$ in (\ref{eq_2_6}) and (\ref{eq_2_7}), respectively, are $\mathcal{F}_t$-measurable random variables. In fact, they are deterministic functionals on  $\Lambda \times \hat{\Lambda}$.
\end{proposition}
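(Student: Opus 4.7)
The plan is to follow the classical argument of \cite{Buckdahn_SICON_2008, Li_SICON_2014}, adapted to the present state-and-control path-dependent setting. The proof naturally splits into two parts: (i) showing $\mathcal{F}_t$-measurability of $\mathbb{L}(A_t;Z_t,W_t)$ and $\mathbb{U}(A_t;Z_t,W_t)$; and (ii) upgrading this to the statement that they are $\mathbb{P}$-a.s.\ constant and hence deterministic functionals of $(A_t, Z_t, W_t) \in \Lambda_t \times \hat{\Lambda}_t$.

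For part (i), fix an arbitrary pair $(\alpha, v) \in \mathcal{A}[t,T] \times \mathcal{V}[t,T]$. By Lemma \ref{Lemma_1}(i), the coupled FBSDE (\ref{eq_2_1})--(\ref{eq_2_2}) with deterministic initial path $A_t$ and controls $Z_t \otimes \alpha(v)$, $W_t \otimes v$ admits a unique $\mathbb{F}$-adapted solution, so $J(t,A_t;Z_t\otimes\alpha(v),W_t\otimes v) = y_t^{t,A_t;Z_t\otimes\alpha(v),W_t\otimes v}$ is $\mathcal{F}_t$-measurable. Since essential infima and suprema over families of $\mathcal{F}_t$-measurable random variables remain $\mathcal{F}_t$-measurable, both $\mathbb{L}$ and $\mathbb{U}$ are $\mathcal{F}_t$-measurable.

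For part (ii), I will introduce the sub-class $\mathcal{V}^t[t,T] := \{v \in \mathcal{V}[t,T] : v \text{ is measurable w.r.t.\ } \sigma(B_s - B_t, s \in [t,T])\}$, and the corresponding $\mathcal{U}^t[t,T]$, and define $\mathcal{A}^t[t,T] \subset \mathcal{A}[t,T]$ (respectively $\mathcal{B}^t[t,T]$) as the nonanticipative strategies mapping $\mathcal{V}^t[t,T] \to \mathcal{U}^t[t,T]$. The central claim is
\begin{align*}
\mathbb{L}(A_t; Z_t, W_t) = \essinf_{\alpha \in \mathcal{A}^t[t,T]} \esssup_{v \in \mathcal{V}^t[t,T]} J(t,A_t; Z_t \otimes \alpha(v), W_t \otimes v),
\end{align*}
and the analogous identity for $\mathbb{U}$. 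The right-hand side is built entirely from objects independent of $\mathcal{F}_t$; combined with the $\mathcal{F}_t$-measurability from part (i), this forces $\mathbb{L}(A_t;Z_t,W_t)$ to be $\mathbb{P}$-a.s.\ constant. The $\leq$ inequality is immediate from the inclusions $\mathcal{A}^t[t,T] \subset \mathcal{A}[t,T]$ and $\mathcal{V}^t[t,T] \subset \mathcal{V}[t,T]$.

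The main obstacle is the $\geq$ inequality, which requires projecting an arbitrary $\alpha \in \mathcal{A}[t,T]$ onto $\mathcal{A}^t[t,T]$ without deteriorating the inner essential supremum. The standard approach decomposes $\Omega \cong \Omega_t \times \Omega^t$ along Brownian paths up to and after $t$, freezes $\omega_t \in \Omega_t$, and defines the sliced strategy $\alpha_{\omega_t}(v)(\omega^t) := \alpha(v)(\omega_t,\omega^t)$; Fubini together with the independence of $\mathcal{F}_t$ and $\sigma(B_s - B_t, s \in [t,T])$ then writes the original value as an $\omega_t$-average of values associated with $\mathcal{A}^t[t,T]$-strategies. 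The delicate point is verifying that $\alpha_{\omega_t}$ is a genuine nonanticipative strategy in the sense of Definition \ref{Definition_2}, together with a measurable selection that realizes the essential supremum on a countable dense set of controls; this is precisely where the separability of $(\hat{\Lambda}_t^{\mathrm{U}}, d^\circ)$ under the Skorohod metric from Section \ref{Section_1_1} is essential, since it permits a countable exhaustion of $\mathcal{V}^t[t,T]$ within which the essential supremum is attained. Stability of the FBSDE solutions under $L^2$ perturbations of the controls, guaranteed by Lemma \ref{Lemma_1}(ii)--(iii) under Assumption \ref{Assumption_1}, ensures that the sliced objective functional depends continuously on this projection, closing the argument. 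The symmetric argument (swapping roles of $(\alpha, v)$ with $(\beta, u)$) handles $\mathbb{U}$.
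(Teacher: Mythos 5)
Your part (i) is fine, but part (ii) is not the argument the paper relies on, and as sketched it has a real gap at exactly the step you flag as delicate. The paper proves this proposition by invoking the method of \cite[Proposition 3.3]{Buckdahn_SICON_2008} and \cite[Proposition 3.3]{Li_SICON_2014}: one shows that $J(t,A_t;Z_t\otimes\alpha(v),W_t\otimes v)$ transforms consistently under the Cameron--Martin translations $\tau_h(\omega)=\omega+h$ with $h$ supported on $[0,t]$, that $v\mapsto v(\tau_h)$ and $\alpha\mapsto\tau_h$-conjugated strategies are bijections of $\mathcal{V}[t,T]$ and $\mathcal{A}[t,T]$, hence that $\mathbb{L}(A_t;Z_t,W_t)\circ\tau_h=\mathbb{L}(A_t;Z_t,W_t)$ $\mathbb{P}$-a.s.\ for every such $h$; an auxiliary lemma (invariance under all such translations implies a.s.\ constancy) then gives determinism. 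This Girsanov-type route is chosen in that literature precisely to avoid the slicing construction you propose.

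The gap in your route is the $\geq$ inequality. First, the essential supremum and infimum are not integrals, so the decomposition $\Omega\cong\Omega_t\times\Omega^t$ plus ``Fubini'' does not let you freeze $\omega_t$ and average: $\esssup_{v\in\mathcal{V}[t,T]}$ is defined only up to $\mathbb{P}$-null sets over an uncountable family, and the sliced objects $\omega_t\mapsto\esssup_{v}J(\cdot)(\omega_t,\cdot)$ need not be well defined or measurable without a regular-conditional-probability argument that you do not supply. Second, your proposed repair via separability of $(\hat{\Lambda}^{\mathrm{U}},d^\circ)$ and the $L^2$-stability in Lemma \ref{Lemma_1}(ii)--(iii) does not close this: the map $v\mapsto J(t,A_t;Z_t\otimes\alpha(v),W_t\otimes v)$ involves an arbitrary nonanticipative strategy $\alpha$, which is not assumed continuous (in any metric) as a map $\mathcal{V}[t,T]\to\mathcal{U}[t,T]$, so a countable dense set of controls does not exhaust the essential supremum, and the continuity of the FBSDE solution in the controls cannot be transferred through $\alpha$. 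Third, even granting the slicing, showing that $\alpha_{\omega_t}$ is a genuine nonanticipative strategy in the sense of Definition \ref{Definition_2} and depends measurably on $\omega_t$ (so that the resulting selection is admissible) is itself nontrivial and is left unproved. As it stands, the central identity equating the value with the essential extrema over $\mathcal{A}^t[t,T]\times\mathcal{V}^t[t,T]$ is asserted rather than established, so the determinism claim is not yet proved; either carry out the translation-invariance argument of \cite{Buckdahn_SICON_2008} in the present state and control path-dependent setting (checking that the initial paths $A_t,Z_t,W_t$ are unaffected by $\tau_h$), or supply the missing measurable-selection and conditioning machinery for your slicing approach.
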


In view of Assumption \ref{Assumption_1} the estimates in Lemma \ref{Lemma_1}, and (\ref{eq_2_4}), the following result holds:

\begin{lemma}\label{Lemma_2}
Suppose that Assumption \ref{Assumption_1} holds. For any $t \in [0,T]$, $A_t^i \in \Lambda_t$ and $(Z_t^i, W_t^i) \in \hat{\Lambda}_t$, $i=1,2$, there exists a constant $C > 0$ such that the following estimates hold: for $\mathbb{G} := \mathbb{L},\mathbb{U}$, $\bigl |\mathbb{G}(A_t^{1}; Z_t^{1}, W_t^{1}) \bigr | \leq C(1 + \|A_t^{1}\|_{\infty})$ and 
\begin{align*}
& \bigl |\mathbb{G}(A_t^{1}; Z_t^{1}, W_t^{1})	- \mathbb{G}(A_t^{2}; Z_t^{2}, W_t^{2}) \bigr |  \\
& \leq C ( \|A_t^1  - A_t^2 \|_{\infty} + \|Z_t^1  - Z_t^2 \| _{\infty} + \|W_t^1  - W_t^2 \|_{\infty}).
\end{align*}
\end{lemma}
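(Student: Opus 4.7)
The plan is to deduce both estimates directly from the BSDE stability bounds in Lemma~\ref{Lemma_1}(iii), after unwinding the definitions \eqref{eq_2_4}--\eqref{eq_2_7}. The pivotal observation, which will make the control-path contributions transparent, is that for any admissible $u \in \mathcal{U}[t,T]$ and any $Z_t^1, Z_t^2 \in \hat{\Lambda}_t^{\mathrm{U}}$, the concatenations $Z_t^1 \otimes u$ and $Z_t^2 \otimes u$ agree on $[t,T]$ and differ only on $[0,t)$, so that
\begin{align*}
\| (Z_t^1 \otimes u)_r - (Z_t^2 \otimes u)_r \|_\infty = \| Z_t^1 - Z_t^2 \|_\infty
\end{align*}
deterministically and independently of $r \in [t,T]$, with the analogous identity holding for the $W_t^i$.

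For the growth bound, I would start from the first estimate of Lemma~\ref{Lemma_1}(iii): for any $(u,v) \in \mathcal{U}[t,T] \times \mathcal{V}[t,T]$,
\begin{align*}
\mathbb{E}\Bigl[ \sup_{s \in [t,T]} \bigl| y_s^{t,A_t^1; Z_t^1 \otimes u, W_t^1 \otimes v} \bigr|^2 \,\bigl|\, \mathcal{F}_t \Bigr] \leq C(1 + \|A_t^1\|_\infty^2).
\end{align*}
Because $y_t$ is $\mathcal{F}_t$-measurable and the supremum on the left dominates $|y_t|^2$, this pins down $|J(t,A_t^1; Z_t^1 \otimes u, W_t^1 \otimes v)| \leq C(1+\|A_t^1\|_\infty)$ uniformly in $(u,v)$. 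Specializing $u = \alpha(v)$ and passing to the essinf over $\alpha \in \mathcal{A}[t,T]$ and esssup over $v \in \mathcal{V}[t,T]$, then invoking Proposition~\ref{Proposition_1} to drop the essential operations, yields the desired bound for $\mathbb{L}$; the argument for $\mathbb{U}$ is symmetric.

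For the Lipschitz estimate I would use the standard inf/sup subtraction trick. Fix $\epsilon > 0$ and choose $\alpha^\epsilon \in \mathcal{A}[t,T]$ that is $\epsilon$-optimal for $\mathbb{L}(A_t^2; Z_t^2, W_t^2)$. Since $\alpha^\epsilon$ is admissible for the other value functional as well,
\begin{align*}
\mathbb{L}(A_t^1;Z_t^1,W_t^1) - \mathbb{L}(A_t^2;Z_t^2,W_t^2) \leq \esssup_{v \in \mathcal{V}[t,T]} \Delta(v) + \epsilon,
\end{align*}
where $\Delta(v) := y_t^{t,A_t^1; Z_t^1 \otimes \alpha^\epsilon(v), W_t^1 \otimes v} - y_t^{t,A_t^2; Z_t^2 \otimes \alpha^\epsilon(v), W_t^2 \otimes v}$. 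Applying the third estimate of Lemma~\ref{Lemma_1}(iii) with $U^i := Z_t^i \otimes \alpha^\epsilon(v)$ and $V^i := W_t^i \otimes v$, and using the concatenation identity above to evaluate the integrands as $\|Z_t^1 - Z_t^2\|_\infty^2$ and $\|W_t^1 - W_t^2\|_\infty^2$, I obtain
\begin{align*}
|\Delta(v)|^2 \leq C\|A_t^1 - A_t^2\|_\infty^2 + CT \bigl( \|Z_t^1 - Z_t^2\|_\infty^2 + \|W_t^1 - W_t^2\|_\infty^2 \bigr)
\end{align*}
uniformly in $v$, the right-hand side being deterministic. Taking square roots, exchanging the roles of the two data sets, and sending $\epsilon \downarrow 0$ delivers the Lipschitz estimate for $\mathbb{L}$; the bound for $\mathbb{U}$ is handled identically by swapping sup/inf and using an $\epsilon$-optimal $\beta^\epsilon \in \mathcal{B}[t,T]$.

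The main technical obstacle will be the measurability and admissibility bookkeeping around the essinf/esssup — in particular, producing an honestly $\epsilon$-optimal nonanticipative strategy via a measurable selection (in the spirit of \cite{Buckdahn_SICON_2008, Li_SICON_2014}) and verifying that the deterministic identification of the control path differences transfers cleanly inside the conditional expectation appearing in Lemma~\ref{Lemma_1}(iii). Once these standard but delicate points are in place, both estimates fall out with essentially no further computation.
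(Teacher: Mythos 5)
Your argument is correct and is exactly the route the paper intends: the paper gives no detailed proof of Lemma \ref{Lemma_2}, stating only that it follows from Assumption \ref{Assumption_1}, the estimates in Lemma \ref{Lemma_1}, and (\ref{eq_2_4}), which is precisely what you carry out — the concatenation identity bounding $\|(Z_t^1\otimes u)_r-(Z_t^2\otimes u)_r\|_\infty$ by $\|Z_t^1-Z_t^2\|_\infty$, Lemma \ref{Lemma_1}(iii) applied to the $\mathcal{F}_t$-measurable $y_t$, and the standard $\epsilon$-optimal-strategy subtraction (whose measurable-patching construction the paper itself supplies later, in (\ref{eq_3_5})--(\ref{eq_3_9}) of Theorem \ref{Theorem_1}, and which you correctly flag). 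No gaps beyond the bookkeeping you already identify.
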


\begin{remark}\label{Remark_4_3_1_1_1_1}
Lemma \ref{Lemma_2} implies that the (lower and upper) value functionals are continuous with respect to $\tilde{d}$, where $\tilde{d}$ is the metric induced by $\|\cdot\|_{\infty}$. Since $d^{\circ} \leq \tilde{d}$ (see Section \ref{Section_1_1}), in view of Definition \ref{Definition_0_1}, we can easily see that the (lower and upper) value functionals are continuous with respect to $d^{\circ}$.
\end{remark}

Before stating the dynamic programming principle of the lower and upper value functionals, we introduce the \emph{backward semigroup} associated with the BSDE in (\ref{eq_2_2}). For any $s \in [t,t+\tau]$ with $\tau \in [t,T-t)$ and $b \in \mathcal{L}^2(\Omega,\mathcal{F}_{t+\tau},\mathbb{R})$, we define
\begin{align}
\label{eq_3_1}
\Pi_{s}^{t,t+\tau,A_t;U,V}[b] := \breve{y}_{s}^{t,t+\tau,A_t;U,V},~ s \in [t,t+\tau],	
\end{align}
where $\breve{y}$ is the first component of the pair $(\breve{y}_{s}^{t,t+\tau,A_t;U,V},\breve{q}_{s}^{t,t+\tau,A_t;U,V})$ that is the solution of the following BSDE:
\begin{align*}	
\breve{y}_s^{t,t+\tau,A_t;U,V} &= b + \int_s^{t+\tau} l(X_r^{t,A_t;U,V}, \breve{y}_r^{t,t+\tau, A_t;U,V}, \breve{q}_r^{t,t+\tau, A_t;U,V}, U_r,V_r) \dd r \\
&~~~ - \int_s^{t+\tau}  \breve{q}_r^{t,t+\tau, A_t;U,V} \dd B_r,~ s \in [t,t+\tau].
\end{align*}
Note that (\ref{eq_3_1}) can be regarded as a \emph{truncated} BSDE in terms of the terminal time $t+\tau$ and the terminal condition. The superscripts $t$ and $t+\tau$ indicate the initial and terminal times, respectively. By definition, we have
\begin{align}
\label{eq_3_2}
& J(t,A_t; Z_{t} \otimes u, W_{t} \otimes v) \\
& = \Pi_{t}^{t,t+\tau,A_t; Z_t \otimes u, W_{t} \otimes v} \Bigl [y_{t+\tau}^{t,A_t; Z_{t} \otimes u, W_{t} \otimes v} \Bigr ] \nonumber \\
& = \Pi_{t}^{t,t+\tau,A_t;Z_{t} \otimes u, W_{t} \otimes v} \Bigl [ J(t+\tau,X_{t+\tau}^{t,A_t; Z_{t} \otimes u, W_{t} \otimes v}; (Z_{t} \otimes u)_{t+\tau}, (W_{t} \otimes v)_{t+\tau}) \Bigr ].  \nonumber
\end{align}

Now, we state the dynamic programming principle of the lower and upper value functionals in (\ref{eq_2_6}) and (\ref{eq_2_7}).
\begin{theorem}\label{Theorem_1}
	Suppose that Assumption \ref{Assumption_1} holds. Then for any $t,t+\tau \in [0,T]$ with $t < t+\tau$, and for any $A_t \in \Lambda_t$ and $(Z_t, W_t) \in \hat{\Lambda}_t$, the lower and upper value functionals in (\ref{eq_2_6}) and (\ref{eq_2_7}), respectively, satisfy the following dynamic programming principles:
\begin{align}
\label{eq_3_3}
\mathbb{L}(A_t; Z_t, W_t) &= \essinf_{\alpha \in \mathcal{A}[t,t+\tau]} \esssup_{v \in \mathcal{V}[t,t+\tau]} \Pi_{t}^{t,t+\tau,A_t;Z_t \otimes \alpha(v), W_{t} \otimes v}  \\
&~~~~~ \Bigl [ \mathbb{L}(X_{t+\tau}^{t,A_t; Z_{t} \otimes \alpha(v), W_t \otimes v}; (Z_t \otimes \alpha(v))_{t+\tau}, (W_t \otimes v)_{t+\tau} ) \Bigr ] \nonumber \\
\label{eq_3_4}
\mathbb{U}(A_t; Z_t, W_t) &= \esssup_{\beta \in \mathcal{B}[t,t+\tau]} \essinf_{u \in \mathcal{U}[t,t+\tau]}  \Pi_{t}^{t,t+\tau,A_t; Z_{t} \otimes u, W_{t} \otimes \beta(u)}  \\
&~~~~~ \Bigl [ \mathbb{U}(X_{t+\tau}^{t,A_t; Z_{t} \otimes u, W_{t} \otimes \beta(u)}; (Z_t \otimes u)_{t+\tau}, (W_t \otimes \beta(u))_{t+\tau} ) \Bigr ]. \nonumber
\end{align}
\end{theorem}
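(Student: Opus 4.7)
The plan is to prove the two identities symmetrically; I focus on the lower DPP (\ref{eq_3_3}), the upper one following by swapping the roles of the players. Write $\widetilde{\mathbb{L}}(A_t;Z_t,W_t)$ for the right-hand side of (\ref{eq_3_3}). The argument is the classical Fleming--Souganidis/Buckdahn--Li two-sided inequality: I would show $\mathbb{L}\ge\widetilde{\mathbb{L}}$ and $\mathbb{L}\le\widetilde{\mathbb{L}}$ separately, using the BSDE flow identity (\ref{eq_3_2}) to represent each $J$ through the semigroup $\Pi$ at time $t+\tau$ and the comparison principle of Lemma~\ref{Lemma_1}(iv) to transport pointwise inequalities through $\Pi$. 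Proposition~\ref{Proposition_1} lets me treat $\mathbb{L}$ as a deterministic functional throughout.

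For $\mathbb{L}\ge\widetilde{\mathbb{L}}$, I would fix $\alpha\in\mathcal{A}[t,T]$ and $v\in\mathcal{V}[t,T]$, restrict $\alpha$ and $v$ to $[t,t+\tau]$, and use (\ref{eq_3_2}) to rewrite $J(t,A_t;Z_t\otimes\alpha(v),W_t\otimes v)$ as $\Pi_t^{t,t+\tau,A_t;\,\cdot\,}[J(t+\tau,X_{t+\tau};\,\cdot\,)]$. The inner terminal value dominates $\mathbb{L}(X_{t+\tau};(Z_t\otimes\alpha(v))_{t+\tau},(W_t\otimes v)_{t+\tau})$ pointwise because $\alpha$ restricted to $[t+\tau,T]$ is admissible in the game restarted at $t+\tau$, so Lemma~\ref{Lemma_1}(iv) gives the corresponding inequality after applying $\Pi_t$. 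Taking $\esssup$ over $v$ and $\essinf$ over $\alpha$ produces $\mathbb{L}(A_t;Z_t,W_t)\ge\widetilde{\mathbb{L}}(A_t;Z_t,W_t)$.

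For the reverse inequality $\mathbb{L}\le\widetilde{\mathbb{L}}$, fix $\varepsilon>0$ and an arbitrary $\alpha_1\in\mathcal{A}[t,t+\tau]$. The task is to extend $\alpha_1$ beyond $t+\tau$ by strategies that are nearly optimal for $\mathbb{L}$ at the random data produced at time $t+\tau$. Because $\mathbb{L}$ is Lipschitz under $\|\cdot\|_\infty$ by Lemma~\ref{Lemma_2} and hence continuous under the Skorohod metric $d^\circ$ by Remark~\ref{Remark_4_3_1_1_1_1}, and because $\Lambda_{t+\tau}\times\hat{\Lambda}_{t+\tau}$ is separable under $d^\circ$, I can partition it into countably many disjoint Borel cells $\{O_k\}_{k\ge 1}$ of $d^\circ$-diameter less than $\delta$, fix representatives $(A^k,Z^k,W^k)\in O_k$, and pick for each $k$ a strategy $\alpha_2^k\in\mathcal{A}[t+\tau,T]$ that is $(\varepsilon/2)$-optimal for $\mathbb{L}(A^k;Z^k,W^k)$. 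Paste
\begin{align*}
\alpha(v)_s := \alpha_1(v)_s\,\mathds{1}_{[t,t+\tau]}(s) + \sum_{k\ge 1}\alpha_2^k(v)_s\,\mathds{1}_{\Theta^k(v)}\,\mathds{1}_{(t+\tau,T]}(s),
\end{align*}
where $\Theta^k(v):=\{(X_{t+\tau}^{t,A_t;Z_t\otimes\alpha_1(v),W_t\otimes v},(Z_t\otimes\alpha_1(v))_{t+\tau},(W_t\otimes v)_{t+\tau})\in O_k\}$. The stability estimates of Lemma~\ref{Lemma_1}(iii) together with Lemma~\ref{Lemma_2} transport the cellwise $\varepsilon$-bound through $\Pi_t$ via Lemma~\ref{Lemma_1}(iv), giving $J(t,A_t;Z_t\otimes\alpha(v),W_t\otimes v)\le\Pi_t[\mathbb{L}(X_{t+\tau};(Z_t\otimes\alpha_1(v))_{t+\tau},(W_t\otimes v)_{t+\tau})]+C\varepsilon$. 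Taking $\esssup_v$, then $\essinf_{\alpha_1}$, and finally $\varepsilon\downarrow 0$ yields $\mathbb{L}\le\widetilde{\mathbb{L}}$.

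The main obstacle is precisely this measurable-selection/pasting step. The crucial point is that $\hat{\Lambda}_{t+\tau}$ is \emph{not} separable under $\|\cdot\|_\infty$ but \emph{is} separable under the Skorohod metric $d^\circ$, which is why the introduction emphasizes that the Skorohod topology is essential; combined with $d^\circ\le d_\infty$, the Lipschitz bound of Lemma~\ref{Lemma_2} guarantees that an $(\varepsilon/2)$-optimal selection at a representative remains uniformly $\varepsilon$-optimal on its entire cell $O_k$. The other delicate check is that the pasted $\alpha$ genuinely belongs to $\mathcal{A}[t,T]$ in the sense of Definition~\ref{Definition_2}: the past-agreement clause at the $\mathbb{F}$-stopping time $t+\tau$ ensures nonanticipativity on $[t,t+\tau]$, while $\mathcal{F}_{t+\tau}$-measurability of the indicators $\mathds{1}_{\Theta^k(v)}$ preserves nonanticipativity on $(t+\tau,T]$. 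The control-path-dependence is what forces the partition to live in $\hat{\Lambda}_{t+\tau}$ rather than only $\Lambda_{t+\tau}$ as in \cite{Zhang_SICON_2017}, and it is there that the Skorohod separability is really needed.
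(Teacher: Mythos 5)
Your second inequality ($\mathbb{L}\le\widetilde{\mathbb{L}}$, built from a countable Skorohod-partition of $\Lambda_{t+\tau}\times\hat{\Lambda}_{t+\tau}$, cellwise $\varepsilon$-optimal strategies pasted by $\mathcal{F}_{t+\tau}$-measurable indicators, and the comparison principle through $\Pi_t$) is essentially the paper's Part~(i). The genuine gap is in your first inequality. You claim that for a fixed $\alpha\in\mathcal{A}[t,T]$ and a fixed $v\in\mathcal{V}[t,T]$ the terminal value satisfies $J(t+\tau,X_{t+\tau};\cdot)\ge\mathbb{L}(X_{t+\tau};\cdot)$ pointwise ``because $\alpha$ restricted to $[t+\tau,T]$ is admissible in the restarted game.'' This is backwards: since $\mathbb{L}=\essinf_{\alpha}\esssup_{v}J$, fixing the strategy only yields $\mathbb{L}(X_{t+\tau};\cdot)\le\esssup_{v''\in\mathcal{V}[t+\tau,T]}J(t+\tau,X_{t+\tau};\cdot)$, and the particular continuation $v|_{[t+\tau,T]}$ that the maximizer happened to play may give a value of $J$ strictly below $\mathbb{L}$ (e.g.\ a badly chosen $v$). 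So no pointwise domination is available, and Lemma~\ref{Lemma_1}(iv) cannot be invoked. To repair this direction you must do what the paper does in its Part~(ii): after restricting $\alpha$ to $[t,t+\tau]$ and choosing a nearly maximizing $v'\in\mathcal{V}[t,t+\tau]$ for the inner $\esssup$ of $\widetilde{\mathbb{L}}$, select at time $t+\tau$ nearly maximizing responses $v_k''\in\mathcal{V}[t+\tau,T]$ cell by cell on the same countable partition, paste them by indicators of the events $\{(X_{t+\tau},Z_{t+\tau},W_{t+\tau})\in\Psi_k'\}$, transfer the cellwise bound to the random data via Lemma~\ref{Lemma_1}(iii) and Lemma~\ref{Lemma_2}, and only then push the inequality through $\Pi_t$ with the comparison principle. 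In other words, the measurable-selection/pasting machinery you reserve for the second direction is indispensable for both directions; the first one is not a soft ``restriction plus comparison'' argument.

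A secondary point: in your second direction you simply ``pick'' an $(\varepsilon/2)$-optimal strategy $\alpha_2^k$ for $\mathbb{L}(A^k;Z^k,W^k)$. Because $\mathbb{L}$ is defined through an essential infimum over a family of strategies, the existence of a single $\varepsilon$-optimal strategy is not automatic; the paper establishes it in (\ref{eq_3_5})--(\ref{eq_3_9}) by extracting a minimizing sequence via \cite[Theorem A.3]{Karatzas_book_1998} and pasting its members on a disjoint partition of $\Omega$, using uniqueness of the BSDE solution. You should either reproduce that construction or cite it explicitly; as written this step is asserted rather than proved.
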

\begin{proof}
We prove (\ref{eq_3_3}) only, as the proof for (\ref{eq_3_4}) is similar to that for (\ref{eq_3_3}). 

Let us define
\begin{align*}
\mathbb{L}^\prime (A_t;Z_t,W_t)	 & := \essinf_{\alpha \in \mathcal{A}[t,t+\tau]} \esssup_{v \in \mathcal{V}[t,t+\tau]} \Pi_{t}^{t,t+\tau,A_t;Z_{t} \otimes \alpha(v), W_{t} \otimes v}  \\
&~~~~~ \Bigl [ \mathbb{L}(X_{t+\tau}^{t,A_t; Z_{t} \otimes \alpha(v), W_t \otimes v}; (Z_t \otimes \alpha(v))_{t+\tau}, (W_t \otimes v)_{t+\tau} ) \Bigr ].
\end{align*}
Below, we show $\mathbb{L}(A_t;Z_t, W_t) \geq \mathbb{L}^\prime (A_t;Z_t, W_t)$ and $\mathbb{L}(A_t;Z_t, W_t) \leq \mathbb{L}^\prime (A_t; Z_t, W_t)$. We modify the proof of \cite{Zhang_SICON_2017} to the state and control path-dependent case.

\textit{Part (i)}: $\mathbb{L}(A_t; Z_t, W_t) \leq \mathbb{L}^\prime (A_t; Z_t, W_t)$

We first show that given $A_t \in \Lambda_t$ and $(Z_t, W_t) \in \hat{\Lambda}_t$, for any $\epsilon > 0$, there exists $\alpha^\epsilon \in \mathcal{A}[t,T]$ such that
\begin{align}
\label{eq_3_5}
	\mathbb{L}(A_t;Z_t, W_t)  \geq   \esssup_{v \in \mathcal{V}[t,T]} 	J(t,A_t; Z_{t} \otimes \alpha^\epsilon (v), W_{t} \otimes v) - \epsilon.
\end{align}

In view of \cite[Theorem A.3]{Karatzas_book_1998},
 there exists $\{\alpha_k\}$ with $\alpha_k \in \mathcal{A}[t,T]$, such that 
\begin{align}	
\label{eq_3_6}
\lim_{k \rightarrow \infty} \esssup_{v \in  \mathcal{V}[t,T]} J(t,A_t; Z_{t} \otimes \alpha_k (v), W_{t} \otimes v) \searrow \mathbb{L}(A_t;Z_t, W_t).
\end{align}
 Let $\bar{\Upsilon}_k := \{\mathbb{L}(A_t; Z_t, W_t) \geq J(t,A_t; Z_{t} \otimes \alpha_k (v), W_{t} \otimes v) - \epsilon \}$, $k \geq 1$. To make the disjoint partition of $\Omega$ with $\{\bar{\Upsilon}_k\}$, let $\Upsilon_1 : = \bar{\Upsilon}_1$ and $\Upsilon_k := \bar{\Upsilon}_k \setminus \{\cup_{i=1}^{k-1} \bar{\Upsilon}_i\}$ for $i \geq 2$. Let $\alpha^\epsilon := \sum_{k=1}^\infty \mathds{1}_{A_k} \alpha_k \in \mathcal{A}[t,T]$. Then, in view of the uniqueness of the solution to the BSDE and (\ref{eq_3_6}), we have 
\begin{align}
\label{eq_3_7}
	\mathbb{L}(A_t;Z_t, W_t) & = \sum_{k=1}^\infty \mathds{1}_{\Upsilon_k} \esssup_{v \in \mathcal{V}[t,T]}J(t,A_t; Z_{t} \otimes \alpha_k (v), W_{t} \otimes v) \\
	   & \geq \sum_{k=1}^\infty \mathds{1}_{\Upsilon_k} (J(t,A_t; Z_{t} \otimes \alpha^\epsilon (v), W_{t} \otimes v) - \epsilon) \nonumber \\
	   & = J(t,A_t; Z_{t} \otimes \alpha^\epsilon (v), W_{t} \otimes v) - \epsilon, \nonumber 
\end{align}
which shows (\ref{eq_3_5}). In fact, to show the first equality in (\ref{eq_3_7}), for any $k \geq 1$, let $\bar{\alpha} : = \mathds{1}_{\Upsilon_k} \alpha_{k} + \mathds{1}_{\Upsilon_k^C} \alpha_k^C$, where $\alpha_k, \alpha_k^C \in \mathcal{A}[t,T]$, in which $\alpha_k$  and $\alpha_k^C$ correspond to $\Upsilon_k$ and $\Upsilon_k^C$, respectively. Based on this construction, we have
\begin{align}
\label{eq_3_8}
& \esssup_{v \in \mathcal{V}[t,T]} J(t,A_t; Z_{t} \otimes \bar{\alpha} (v), W_{t} \otimes v) \nonumber \\
& \leq \mathds{1}_{\Upsilon_k} \esssup_{v \in \mathcal{V}[t,T]} J(t,A_t; Z_{t} \otimes \alpha_k (v), W_{t} \otimes v) \\
&~~~ +  \mathds{1}_{\Upsilon_k^C} \esssup_{v \in \mathcal{V}[t,T]} J(t,A_t; Z_{t} \otimes \alpha_k^C (v), W_{t} \otimes v). \nonumber
\end{align}
On the other hand, from \cite[Theorem A.3]{Karatzas_book_1998}, there exist $\{v_l\}$ and $\{v_l^C\}$ with $v_l,v_l^C \in \mathcal{V}[t,T]$ such that
\begin{align*}
	\limsup_{l \rightarrow \infty} J(t,A_t; Z_{t}^1 \otimes \alpha_k (v_l), Z_{t}^2 \otimes v) &= \esssup_{v \in \mathcal{V}[t,T]} J(t,A_t; Z_{t}^1 \otimes \alpha_k (v), Z_{t}^2 \otimes v) \\
	\limsup_{l \rightarrow \infty} J(t,A_t; Z_{t}^1 \otimes \alpha_k (v_l^C), Z_{t}^2 \otimes v) &= \esssup_{v \in \mathcal{V}[t,T]} J(t,A_t; Z_{t}^1 \otimes \alpha_k^C (v), Z_{t}^2 \otimes v).
\end{align*}
Hence, we have
\begin{align}
\label{eq_3_9}
& \esssup_{v \in \mathcal{V}[t,T]} J(t,A_t; Z_{t} \otimes \bar{\alpha} (v), W_{t} \otimes v) \nonumber \\
& \geq 	\limsup_{l \rightarrow \infty} \{\mathds{1}_{\Upsilon_k} J(t,A_t; Z_{t} \otimes \alpha_k (v_l), W_{t} \otimes v) \nonumber \\
& ~~~~~~~~~~ + \mathds{1}_{\Upsilon_k^C}  J(t,A_t; Z_{t} \otimes \alpha_k (v_l^C), W_t \otimes v) \} \nonumber \\
& = \mathds{1}_{\Upsilon_k} \esssup_{v \in \mathcal{V}[t,T]} J(t,A_t; Z_{t} \otimes \alpha_k (v), W_{t} \otimes v) \\
&~~~ +  \mathds{1}_{\Upsilon_k^C} \esssup_{v \in \mathcal{V}[t,T]} J(t,A_t; Z_{t} \otimes \alpha_k^C (v), W_{t} \otimes v). \nonumber 
\end{align}
Then, (\ref{eq_3_8}) and (\ref{eq_3_9}) imply (\ref{eq_3_7}); hence, (\ref{eq_3_5}) holds.

Now, given $A_t \in \Lambda_t$, let $\widehat{\Psi}_{t,t+\tau}^{A_t} := \bigl \{	\bar{A}_{t+\tau} \in \Lambda_{t+\tau} ~:~ \bar{a}_{r} = a_r,~ \forall r \in [0,t] \bigr \}$.
We note that $\widehat{\Psi}$ is the set of continuous functions, which together with the metric $\tilde{d}$ induced by the norm $\|\cdot\|_{\infty}$ implies that $\widehat{\Psi}$ is a complete separable metric space. Let $d^\circ$ be the Skorohod metric for $\hat{\Lambda}_t$ (see the notation in Section \ref{Section_1_1}). Then in view of \cite[Theorem 12.2]{Billingsley_book}, $\hat{\Lambda}_t$ is a complete separable metric space, and from \cite{Ross_AMM_1964}, $\Psi_{t,t+\tau}^{A_t;Z_t, W_t}:= \widehat{\Psi}_{t,t+\tau}^{A_t} \times \hat{\Lambda}_t$ is a complete separable metric space with the metric $\hat{d} := \tilde{d} + d^\circ$. Therefore, there exists a countable dense subset, denoted by $\{\bar{\Psi}_k\}$ \cite{Taylor_book_1968}, and for any $(\bar{A}_{t+\tau}, Z_{t}, W_{t}) \in \Psi_{t,t+\tau}^{A_t;Z_t, W_t} $ and $\epsilon > 0$, there exists $(\bar{A}_{t+\tau}^k, \bar{Z}_{t}^k, \bar{W}_{t}^k) \in \bar{\Psi}_k$, $k \geq 1$, such that we have $\hat{d}((\bar{A}_{t+\tau}, Z_{t}, W_{t}), (\bar{A}_{t+\tau}^k, \bar{Z}_{t}^{k}, \bar{W}_{t}^{k})) < \epsilon$. For $(\bar{A}_{t+\tau}^k, \bar{Z}_{t}^{k}, \bar{W}_{t}^{k}) \in \bar{\Psi}_k$, we define the set of neighborhood of $\bar{\Psi}_k$ by
\begin{align*}
\Psi_k^\prime & := \bigl \{ 	(\bar{A}_{t+\tau}, Z_{t}, W_{t}) \in \Psi_{t,t+\tau}^{A_t;Z_t,W_t}~:~  \hat{d}((\bar{A}_{t+\tau}, Z_{t}, W_{t}), (\bar{A}_{t+\tau}^k, \bar{Z}_{t}^{k}, \bar{W}_{t}^{k})) < \epsilon \bigr \}.
\end{align*}
In view of this construction, $\cup_{k=1}^{\infty} \Psi_k^\prime = \Psi_{t,t+\tau}^{A_t; Z_t, W_t}$, and by a slight abuse of notation, with $\Psi_1^\prime := \Psi_1^\prime$ and $\Psi_k^\prime := \Psi_k^\prime \setminus \{\cup_{j=1}^{k-1} \Psi_k^\prime\}$, $k \geq 2$, we still have $\cup_{k=1}^{\infty} \Psi_k^\prime = \Psi_{t,t+\tau}^{A_t;Z_t, W_t}$, where $\{\Psi_k^\prime\}$ is the disjoint partition of $\Psi_{t,t+\tau}^{A_t; Z_t, W_t}$. 

For any $(A_t,Z_t, W_t) \in \Lambda_t \times \hat{\Lambda}_t$, $\alpha^\prime \in \mathcal{A}[t,t+\tau]$ and $ v^\prime \in \mathcal{V}[t,t+\tau]$, with the above construction, together with Lemma \ref{Lemma_2} and Remark \ref{Remark_4_3_1_1_1_1}, for each $\epsilon > 0$, there exists a constant $C > 0$ such that
\begin{align}	
\label{eq_3_11}
& \mathbb{L}(X_{t+\tau}^{t,A_t;Z_{t} \otimes \alpha^\prime(v^\prime), W_t \otimes v^\prime}; (Z_t \otimes \alpha^\prime(v^\prime))_{t+\tau}, (W_t \otimes v^\prime)_{t+\tau}) \\
& = \sum_{k=1}^\infty \mathds{1}_{(X_{t+\tau}^{t,A_t;Z_{t} \otimes \alpha^\prime(v^\prime), W_t \otimes v^\prime}, Z_{t}, W_{t}) \in \Psi_k^\prime  } \nonumber \\
&~~~~~~~~~~ \times \mathbb{L}(X_{t+\tau}^{t,A_t;Z_{t} \otimes \alpha^\prime(v^\prime), W_t \otimes v^\prime}; (Z_t \otimes \alpha^\prime(v^\prime))_{t+\tau}, (W_t \otimes v^\prime)_{t+\tau}) \nonumber \\
& \geq \sum_{k=1}^\infty  \mathds{1}_{(X_{t+\tau}^{t,A_t; Z_{t} \otimes \alpha^\prime(v^\prime), W_t \otimes v^\prime}, Z_{t}, W_{t}) \in \Psi_k^\prime  } \nonumber \\
&~~~~~~~~~~ \times \mathbb{L}(\bar{A}_{t+\tau}^k; (\bar{Z}_{t}^{k} \otimes \alpha^\prime (v^\prime))_{t+\tau}, (\bar{W}_{t}^{k} \otimes v^\prime)_{t+\tau}) - C \epsilon. \nonumber
\end{align}
Note that (\ref{eq_3_5}) implies that there exists $\alpha_k^{\prime \prime} \in \mathcal{A}[t+\tau,T]$ such that for $k \geq 1$,
\begin{align*}	
& \mathbb{L}(\bar{A}_{t+\tau}^k; (\bar{Z}_{t}^{k} \otimes \alpha^\prime (v^\prime))_{t+\tau}, (\bar{W}_{t}^{k} \otimes v^\prime)_{t+\tau}) \\
& \geq \sup_{v \in \mathcal{V}[t+\tau,T]} J(t+\tau,\bar{A}_{t+\tau}^k; (\bar{Z}_{t}^{k} \otimes \alpha_k^{\prime \prime} (v))_{t+\tau}, (\bar{W}_{t}^{k} \otimes v )_{t+\tau}) - \epsilon.
\end{align*}
Hence, from (\ref{eq_3_11}), for any $v^{\prime \prime} \in \mathcal{V}[t+\tau,T]$, we have
\begin{align}
\label{eq_3_11_1}
& \mathbb{L}(\bar{A}_{t+\tau}^k; (\bar{Z}_{t}^{k} \otimes \alpha^\prime (v^\prime))_{t+\tau}, (\bar{W}_{t}^{k} \otimes v^\prime)_{t+\tau}) \\
& \geq \sum_{k=1}^\infty  \mathds{1}_{(X_{t+\tau}^{t,A_t; Z_{t} \otimes \alpha^\prime(v^\prime), W_t \otimes v^\prime}, Z_{t}, W_{t}) \in \Psi_k^\prime  } \nonumber \\
&~~~~~~~~~~ \times J(t+\tau,\bar{A}_{t+\tau}^k; (\bar{Z}_{t}^{k} \otimes \alpha_k^{\prime \prime} (v^{\prime \prime}))_{t+\tau}, (\bar{W}_{t}^{k} \otimes v^{\prime \prime} )_{t+\tau}) - C \epsilon \nonumber \\
& = J(t+\tau,X_{t+\tau}^{t,A_t;Z_{t} \otimes \alpha^\prime(v^\prime),W_t \otimes v^\prime}; (Z_{t} \otimes \alpha^{\prime \prime} (v^{\prime \prime}))_{t+\tau}, (W_{t} \otimes v^{\prime \prime} )_{t+\tau}) - C \epsilon, \nonumber
\end{align}
where $\alpha^{\prime \prime} (v^{\prime \prime}) := \sum_{k=1}^\infty \mathds{1}_{(X_{t+\tau}^{t,A_t;Z_{t} \otimes \alpha^\prime(v^\prime), W_t \otimes v^\prime}, Z_{t}, W_{t}) \in \Psi_k^\prime  } \alpha_k^{\prime \prime} (v^{\prime \prime}) $ and $v^{\prime \prime} \in \mathcal{V}[t+\tau,T]$.

Let us define
\begin{align*}
	v^{\prime \prime \prime}_s & := \mathds{1}_{s \in [t,t+\tau]} v_s^{\prime} + \mathds{1}_{s \in (t+\tau,T]} v^{\prime \prime}_s \\
	\alpha^{\prime \prime \prime}_s &:= \mathds{1}_{s \in [t,t+\tau]} \alpha_s^\prime (v^{\prime}) + \mathds{1}_{s \in (t+\tau,T]} \alpha_s^{\prime \prime} (v^{\prime \prime}).
\end{align*}
Note that $v^{\prime \prime \prime} \in \mathcal{V}[t,T]$ and $\alpha^{\prime \prime \prime} \in \mathcal{A}[t,T]$. In view of (\ref{eq_2_4}), we have
\begin{align*}
(W_t \otimes v^{\prime \prime \prime})[s] & = \mathds{1}_{s \in [t,t+\tau]} (W_t \otimes v^{\prime})[s] + \mathds{1}_{s \in (t+\tau,T]} (W_t \otimes v^{\prime \prime})[s]  \\
(Z_t \otimes \alpha^{\prime \prime \prime} ( v^{\prime \prime \prime}))[s]  & = 	\mathds{1}_{s \in [t,t+\tau]} (Z_t \otimes \alpha^\prime (v^\prime) )[s] + \mathds{1}_{s \in (t+\tau,T]} (Z_t \otimes \alpha^{\prime \prime} (v^{\prime \prime}))[s],
\end{align*}
where it can be verified that $W_t \otimes v^{\prime \prime \prime}  \in \mathcal{V}[0,T]$.
Then from the comparison principle in (iv) of Lemma \ref{Lemma_1}, (\ref{eq_3_1}) and (\ref{eq_3_2}), we have
\begin{align*}
& \Pi_{t}^{t,t+\tau,A_t;Z_{t} \otimes \alpha^\prime(v^\prime), W_{t} \otimes v^\prime}  \\
&~~ \Bigl [ \mathbb{L}(X_{t+\tau}^{t,A_t;Z_{t} \otimes \alpha^\prime(v^\prime),W_t \otimes v^\prime}; (Z_t \otimes \alpha^\prime(v^\prime))_{t+\tau}, (W_t \otimes v^\prime)_{t+\tau}) \Bigr ]  \\
& \geq \Pi_{t}^{t,t+\tau,A_t;Z_{t} \otimes \alpha^\prime(v^\prime), W_{t} \otimes v^\prime}  \\
&~~~~~ \Bigl [J(t+\tau,X_{t+\tau}^{t,A_t;Z_{t} \otimes \alpha^\prime(v^\prime), W_t \otimes v^\prime}; (Z_{t} \otimes \alpha^{\prime \prime} (v^{\prime \prime}))_{t+\tau}, (W_{t} \otimes v^{\prime \prime} )_{t+\tau}) \Bigr ] - C \epsilon \\
& = J(t,A_t; (Z_{t} \otimes \alpha^{\prime \prime \prime} (v^{\prime \prime \prime}))_{t+\tau}, (W_{t} \otimes v^{\prime \prime \prime} )_{t+\tau}) - C \epsilon.
\end{align*}
The arbitrariness of $v^{\prime \prime \prime}$ and $\alpha^{\prime \prime \prime}$, together with the definition of $\Pi$ and (\ref{eq_3_2}), yields
\begin{align*}
& \essinf_{\alpha \in [t,t+\tau]} \esssup_{v \in \mathcal{V}[t,t+\tau]}	\Pi_{t}^{t,t+\tau,A_t; Z_{t} \otimes \alpha (v), W_{t} \otimes v} \\
&~~~~~ \Bigl [ \mathbb{L}(X_{t+\tau}^{t,A_t;Z_{t} \otimes \alpha(v), W_t \otimes v}; (Z_t \otimes \alpha(v))_{t+\tau}, (W_t \otimes v)_{t+\tau}) \Bigr ]  \\
& \geq \essinf_{\alpha \in [t,T]} \esssup_{v \in \mathcal{V}[t,T]} J(t,A_t; (Z_{t} \otimes \alpha (v))_{t+\tau}, (W_{t} \otimes v )_{t+\tau}) - C \epsilon.
\end{align*}
By letting $\epsilon \downarrow 0$, we have the desired result. 

\textit{Part (ii)}: $\mathbb{L}(A_t;Z_t,W_t) \geq \mathbb{L}^\prime (A_t;Z_t,W_t)$

We first note that for any fixed $\alpha^\prime \in \mathcal{A}[t,T]$ with $v \in \mathcal{V}[t,T]$, its restriction to $[t,t+\tau]$ is still nonanticipative  independent of any special choice of $v \in \mathcal{V}[t+\tau, T]$, i.e., $\alpha^\prime|_{[t,t+\tau]} \in \mathcal{A}[t,t+\tau]$ for $v \in \mathcal{V}[t,t+\tau]$, due to the nonanticipative property of $\alpha^\prime$. Recall the definition of $\mathbb{L}^\prime$; then with the restriction of $\alpha^\prime$ to $[t,t+\tau]$, we have
\begin{align*}
\mathbb{L}^\prime (A_t;Z_t,W_t)	 & \leq \esssup_{v \in \mathcal{V}[t,t+\tau]} \Pi_{t}^{t,t+\tau,A_t;Z_{t} \otimes \alpha^\prime(v), W_{t} \otimes v}  \\
&~~~~~ \Bigl [ \mathbb{L}(X_{t+\tau}^{t,A_t;Z_{t} \otimes \alpha^\prime(v), W_t \otimes v}; (Z_t \otimes \alpha^\prime(v))_{t+\tau}, (W_t \otimes v)_{t+\tau} ) \Bigr ].	
\end{align*}
Furthermore, similar to (\ref{eq_3_6}), there exists $\{v_k\}$, with $v_k \in \mathcal{V}[t,t+\tau]$, such that
\begin{align}
\label{eq_3_12}
& \lim_{k \rightarrow \infty} \Pi_{t}^{t,t+\tau,A_t;Z_{t} \otimes \alpha^\prime(v_k), W_{t} \otimes v_k}  \\
&~~~~~~~~ \Bigl [ \mathbb{L}(X_{t+\tau}^{t,A_t; Z_{t} \otimes \alpha^\prime(v_k),W_t \otimes v_k}; (Z_t \otimes \alpha^\prime(v_k))_{t+\tau}, (W_t \otimes v_k)_{t+\tau} ) \Bigr ] \nonumber \\
&=  \esssup_{v \in \mathcal{V}[t,t+\tau]} \Pi_{t}^{t,t+\tau,A_t; Z_{t} \otimes \alpha^\prime(v), W_{t} \otimes v}  \nonumber \\
&~~~~~~~~ \Bigl [ \mathbb{L}(X_{t+\tau}^{t,A_t; Z_{t} \otimes \alpha^\prime(v), W_t \otimes v}; (Z_t \otimes \alpha^\prime(v))_{t+\tau}, (W_t \otimes v)_{t+\tau} ) \Bigr ].	 \nonumber
\end{align}
Then by using (\ref{eq_3_12}) and the approach of (\ref{eq_3_5}), for each $\epsilon \geq 0$, there exists $v ^\prime \in \mathcal{V}[t,t+\tau]$ such that
\begin{align}
\label{eq_4_13_1_1}
\mathbb{L}^\prime (A_t;Z_t,W_t)	 & \leq   \Pi_{t}^{t,t+\tau,A_t;Z_{t} \otimes \alpha^\prime(v^\prime), W_{t} \otimes v^\prime}   \\
&  \Bigl [\mathbb{L}(X_{t+\tau}^{t,A_t; Z_{t} \otimes \alpha^\prime(v^\prime), W_t \otimes v^\prime}; (Z_t \otimes \alpha^\prime(v ^\prime))_{t+\tau}, (W_t \otimes v ^\prime)_{t+\tau} ) \Bigr ]  + \epsilon. 	 \nonumber
\end{align}
Similar to the argument and the notation introduced in (\ref{eq_3_11}) and (\ref{eq_3_11_1}), there exists $v_k^{\prime \prime}   \in \mathcal{V}[t+\tau,T]$, $k \geq 1$, such that 
\begin{align*}	
& \mathbb{L}(X_{t+\tau}^{t,A_t; Z_{t} \otimes \alpha^\prime(v^\prime), W_t \otimes v^\prime}; (Z_t \otimes \alpha^\prime (v ^\prime))_{t+\tau}, (W_t \otimes v ^\prime)_{t+\tau} ) \\
& = \sum_{k=1}^\infty \mathds{1}_{(X_{t+\tau}^{t,A_t; Z_{t} \otimes \alpha^\prime(v^\prime), W_t \otimes v^\prime}, Z_{t}, W_{t}) \in \Psi_k^\prime  } \\
&~~~~~~~~~~ \times \mathbb{L}(X_{t+\tau}^{t,A_t; Z_{t} \otimes \alpha^\prime(v^\prime), W_t \otimes v^\prime}; (Z_t \otimes \alpha^\prime(v^\prime))_{t+\tau}, (W_t \otimes v^\prime)_{t+\tau}) \\
& \leq \sum_{k=1}^\infty  \mathds{1}_{(X_{t+\tau}^{t,A_t; Z_{t} \otimes \alpha^\prime(v^\prime), W_t \otimes v^\prime}, Z_{t}, W_{t}) \in \Psi_k^\prime  } \\
&~~~~~~~~~~ \times J(t+\tau,\bar{A}_{t+\tau}^k; (\bar{Z}_{t}^k \otimes \alpha^{\prime } (v^{\prime \prime}_k))_{t+\tau}, (\bar{W}_{t}^{k} \otimes v^{\prime \prime}_k )_{t+\tau}) + C \epsilon \\
& = J(t+\tau,X_{t+\tau}^{t,A_t; Z_{t} \otimes \alpha^\prime(v^\prime), W_t \otimes v^\prime}; (Z_{t} \otimes \alpha^{\prime} |_{[t+\tau,T]}(v^{\prime \prime}))_{t+\tau}, (W_{t} \otimes v^{\prime \prime} )_{t+\tau}) + C \epsilon,
\end{align*}
where $v^{\prime \prime}:= \sum_{k=1}^\infty \mathds{1}_{(X_{t+\tau}^{t,A_t; Z_{t} \otimes \alpha^\prime(v^\prime), W_t \otimes v^\prime}, Z_{t}, W_{t}) \in \Psi_k^\prime  } v^{\prime \prime}_k$.

Let us define
\begin{align*}
v^{\prime \prime \prime}_s & := \mathds{1}_{s \in [t,t+\tau]} v_s^{\prime} + \mathds{1}_{s \in (t+\tau,T]} v_s^{\prime \prime} \\
\alpha^{\prime}_s ( v^{\prime \prime \prime}) & := 	\mathds{1}_{s \in [t,t+\tau]} \alpha^\prime_s (v^\prime) + \mathds{1}_{s \in (t+\tau,T]} \alpha^{\prime }_s (v^{\prime \prime}). 
\end{align*}
Note that $v^{\prime \prime \prime} \in \mathcal{V}[t,T]$ and $\alpha^\prime \in \mathcal{A}[t,T]$. Then, from (\ref{eq_2_4}),
\begin{align*}	
(W_t \otimes v^{\prime \prime \prime})[s] & = \mathds{1}_{s \in [t,t+\tau]} (W_t \otimes v^{\prime})[s] + \mathds{1}_{s \in (t+\tau,T]} (W_t \otimes v^{\prime \prime})[s] \\
(Z_t \otimes \alpha^{\prime} ( v^{\prime \prime \prime}))[s] & = 	\mathds{1}_{s \in [t,t+\tau]} (Z_t \otimes  \alpha^\prime (v^\prime))[s] + \mathds{1}_{s \in (t+\tau,T]} (Z_t \otimes  \alpha^{\prime } (v^{\prime \prime}))[s],
\end{align*}
where $W_t \otimes v^{\prime \prime \prime} \in \mathcal{V}[0,T]$. From (iv) of Lemma \ref{Lemma_1}, (\ref{eq_3_1}), and (\ref{eq_3_2}), we have
\begin{align*}
& \mathbb{L}^\prime (A_t;Z_t,W_t)	 \\
& \leq   \Pi_{t}^{t,t+\tau,A_t;Z_{t} \otimes \alpha^\prime(v^\prime), W_{t} \otimes v^\prime}  \\
&~~~~~ \Bigl [ J(t+\tau,X_{t+\tau}^{t,A_t;Z_{t} \otimes \alpha^\prime(v^\prime), W_t \otimes v^\prime}; (Z_{t} \otimes \alpha^{\prime} (v^{\prime \prime}))_{t+\tau}, (W_{t} \otimes v^{\prime \prime} )_{t+\tau}) \Bigr ] + C \epsilon \\
& = J(t,A_t; Z_{t} \otimes \alpha^{\prime } (v^{\prime \prime \prime}), W_{t} \otimes v^{\prime \prime \prime} ) + C \epsilon.
\end{align*}
The arbitrariness of $v^{\prime \prime \prime}$ and the definition of $\Pi$ imply, 
\begin{align*}
& \mathbb{L}^\prime (A_t; Z_t, W_t)	\leq \sup_{v \in \mathcal{V}[t,T]}  J(t,A_t; Z_{t} \otimes \alpha^{\prime } (v), W_{t} \otimes v )  + C \epsilon,
\end{align*}
and by taking $\essinf$ with respect to $\alpha \in \mathcal{A}[t,T]$ and then $\epsilon \downarrow 0$, we have the desired result. Hence, Parts (i) and (ii) show the dynamic programming principle of the lower value functional $\mathbb{L}$ in (\ref{eq_3_3}). This completes the proof of the theorem.
\end{proof}

\begin{remark}
By using standard localization and approximation techniques, Theorem \ref{Theorem_1} can be extended to stopping times.	
\end{remark}

From Lemma \ref{Lemma_2}, the (lower and upper) value functionals are continuous with respect to the initial state and control paths. We next state the continuity of the (lower and upper) value functionals in $t \in [0,T]$. 

\begin{lemma}\label{Lemma_3}
	Suppose that Assumption \ref{Assumption_1} holds. Then, the lower and upper value functionals are  continuous in $t$. In particular, there exists a constant $C > 0$ such that for any $(A_T,Z_T,W_T) \in \Lambda_T \times \hat{\Lambda}_T$ and $t_1,t_2 \in [0,T]$ with $t^\prime := \max\{t_1,t_2\}$, ($\mathbb{G} := \mathbb{L},\mathbb{U}$)
\begin{align*}
|\mathbb{G}(A_{t_1}; Z_{t_1}, W_{t_1}) - \mathbb{G}(A_{t_2}; Z_{t_2}, W_{t_2})| &\leq C (1 + \|A_{t^\prime} \|_{\infty})	|t_1 - t_2|^{1/2}.
\end{align*}
\end{lemma}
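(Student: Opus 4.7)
Without loss of generality, take $t_1 < t_2$ and set $\Delta := t_2 - t_1$; the argument is written for $\mathbb{L}$, the one for $\mathbb{U}$ being entirely symmetric. The plan is to use the dynamic programming principle of Theorem~\ref{Theorem_1} to reduce the comparison to the common time $t_2$, and then to exploit Lipschitz-type estimates for the forward SDE (Lemma~\ref{Lemma_1}(ii)), BSDE-stability estimates (Lemma~\ref{Lemma_1}(iii)), and the Lipschitz bound for the value functional itself (Lemma~\ref{Lemma_2}). Applying Theorem~\ref{Theorem_1} with horizon $\Delta$,
\begin{align*}
\mathbb{L}(A_{t_1}; Z_{t_1}, W_{t_1}) = \essinf_{\alpha \in \mathcal{A}[t_1,t_2]} \esssup_{v \in \mathcal{V}[t_1,t_2]} \Pi_{t_1}^{t_1,t_2,A_{t_1}; Z_{t_1} \otimes \alpha(v), W_{t_1} \otimes v} \bigl[ \mathbb{L}(X_{t_2}^{t_1,A_{t_1};\cdot,\cdot}; (Z_{t_1} \otimes \alpha(v))_{t_2}, (W_{t_1} \otimes v)_{t_2}) \bigr],
\end{align*}
so the task reduces to comparing the quantity inside the essinf--esssup with the deterministic constant $\mathbb{L}(A_{t_2}; Z_{t_2}, W_{t_2})$. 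To remove Player~1's $Z$-discrepancy, I take the nonanticipative strategy $\alpha^\star(v)_s := z_s$ for $s \in [t_1, t_2]$, under which $(Z_{t_1} \otimes \alpha^\star(v))_{t_2} \equiv Z_{t_2}$.

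For the upper inequality, I plug $\alpha^\star$ into the DPP and, inside each $\Pi$, add and subtract $\mathbb{L}(A_{t_2}; Z_{t_2}, W_{t_2})$. Using Lemma~\ref{Lemma_2} on the $\mathbb{L}$-difference, the state-path part is split through the flat extension $A_{t_1,\Delta}$: Lemma~\ref{Lemma_1}(ii) yields $\mathbb{E}\|X_{t_2}^{\cdots} - A_{t_1,\Delta}\|_\infty^2 \leq C(1+\|A_{t_1}\|_\infty^2)\Delta$, while the residual $\|A_{t_1,\Delta} - A_{t_2}\|_\infty$ captures the oscillation of the (continuous) reference path $A_T$ on $[t_1,t_2]$. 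The $\Pi$-operator is handled by the BSDE estimates of Lemma~\ref{Lemma_1}(iii): Lipschitz stability in the terminal data, combined with the fact that $\Pi_{t_1}^{\cdots}[c]$ differs from the constant $c$ by $O(\Delta^{1/2})$ in $L^2$ (since the driver $l$ is bounded and the interval has length $\Delta$), closes the estimate. The reverse inequality is obtained symmetrically, playing $v^\star_s := w_s$ on $[t_1, t_2]$ and pulling an $\epsilon$-optimal strategy for the $t_2$-problem back to $[t_1, t_2]$ via the $\alpha^\star$ construction.

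The main technical obstacle is that a naive application of Lemma~\ref{Lemma_2} to the opponent's contribution $\|(W_{t_1} \otimes v)_{t_2} - W_{t_2}\|_\infty = \sup_{s \in [t_1, t_2]}|v_s - w_s|$ gives only $O(1)$, because $v$ ranges over all of $\mathcal{V}[t_1,t_2]$ and $\mathrm{V}$ is merely assumed compact. To circumvent this, I bypass the pointwise path-Lipschitz step and instead compare, at the BSDE level, the BSDE driving $\Pi$ with a reference BSDE driven by the ``matching'' control $w_s$ on $[t_1,t_2]$; the deviation between the two enters only through the Lipschitz-controlled running cost $l$ on the length-$\Delta$ subinterval, which integrates to $O(\Delta)$ in $L^2$ by Lemma~\ref{Lemma_1}(iii) and therefore contributes only $O(\Delta^{1/2})$ to the overall bound. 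This BSDE-level comparison, rather than the pointwise Lemma~\ref{Lemma_2}, is the new ingredient relative to the state-path-dependent argument of \cite{Zhang_SICON_2017}.
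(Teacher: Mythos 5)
Your overall architecture coincides with the paper's: apply the dynamic programming principle over $[t_1,t_2]$, add and subtract the value at time $t_2$ inside $\Pi$, and control the resulting pieces by Lemma \ref{Lemma_1}(ii)--(iii), Lemma \ref{Lemma_2} and the stability of $\Pi$ (this is precisely the paper's decomposition into $L^{(1)}$, $L^{(2)}$, $L^{(3)}$ around (\ref{eq_3_15_1})). The genuine gap is exactly in the step you single out as your ``new ingredient''. Because the coefficients are control \emph{path}-dependent, the mismatch between $v$ and $w$ on $[t_1,t_2]$ does not enter ``only through the running cost $l$ on the length-$\Delta$ subinterval'': it sits inside the terminal datum $\mathbb{L}(X_{t_2}^{\cdots};Z_{t_2},(W_{t_1}\otimes v)_{t_2})$ of the $\Pi$-BSDE, i.e.\ in a game on $[t_2,T]$ whose coefficients $f,\sigma,l$ at every $s\in[t_2,T]$ depend on the full paths $U_s,V_s$ and hence still see the $[t_1,t_2]$ segment. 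The available stability estimates (Lemma \ref{Lemma_1}(iii), Lemma \ref{Lemma_2}) then involve $\int_{t_2}^{T}\|V^1_r-V^2_r\|_\infty^2\,\dd r$ with $\|V^1_r-V^2_r\|_\infty\ge \sup_{s\in[t_1,t_2]}|v_s-w_s|$ for \emph{every} $r\ge t_2$, so this contribution is $O(T-t_2)$, not $O(\Delta)$. Replacing the driver's control by $w$ on $[t_1,t_2]$ only modifies the driver over an interval of length $\Delta$ (an $O(\Delta^{1/2})$ effect, essentially the paper's $L^{(3)}$), but leaves the problematic terminal term untouched; the $O(1)$ obstruction you correctly identified is therefore never removed.

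For comparison, the paper does not attack this term with a BSDE on $[t_1,t_2]$ at all: it keeps the $\epsilon$-optimal strategy $\alpha^\epsilon$ (rather than pinning Player 1 to $z$), isolates the control-path discrepancy in $L^{(2)}$, rewrites both $\mathbb{L}(A_{t_2};(Z_{t_1}\otimes\alpha^\epsilon(v))_{t_2},(W_{t_1}\otimes v)_{t_2})$ and $\mathbb{L}(A_{t_2};Z_{t_2},W_{t_2})$ as $\essinf_{\alpha}\esssup_{v}$ of $J(t_2,A_{t_2};\cdot,\cdot)$ over the \emph{same} future controls, and concludes in (\ref{eq_3_17}) that $L^{(2)}$ vanishes; whatever one thinks of that step, your substitute argument does not reproduce it. Two further points: (a) pinning $\alpha^\star(v)=z$ helps only in one direction --- for the reverse inequality the $\essinf$ forces you onto an $\epsilon$-optimal $\alpha^\epsilon$ whose values on $[t_1,t_2]$ need not equal $z$, so the same $O(1)$ discrepancy reappears on the $Z$-side and the ``$\alpha^\star$ construction'' is no longer available; (b) the oscillation term $\|A_{t_1,\Delta}-A_{t_2}\|_\infty$ is left unestimated in your write-up (the paper passes directly to the bound $C(1+\|A_{t_2}\|_\infty)\Delta^{1/2}$ in (\ref{eq_3_15})), so you should at least indicate how it is absorbed into the claimed modulus of continuity.
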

\begin{proof}
	We prove the case for the lower value functional only, since the proof for the upper value functional is similar.  Without loss of generality, for any $t_{1} = t, t_{2} = t+\tau \in [0,T]$ with $t < t+\tau$, we need to prove
	\begin{align}	
	\label{eq_3_13}
	- C (1 + \|A_{t+\tau} \|_{\infty})	\tau^{1/2} & \leq \mathbb{L}(A_t; Z_t, W_t)  - \mathbb{L}(A_{t+\tau}; Z_{t+\tau}, W_{t+\tau})  \\
	&\leq C (1 + \|A_{t+\tau} \|_{\infty})	\tau^{1/2}. \nonumber
	\end{align}
	In view of the dynamic programming principle (\ref{eq_3_3}) in Theorem \ref{Theorem_1} and (\ref{eq_3_5}), for any $\epsilon > 0$, there exists $ \alpha^\epsilon \in \mathcal{A}[t,t+\tau]$ such that for any $v \in \mathcal{V}[t,t+\tau]$,
	\begin{align*}
	\mathbb{L}(A_t; Z_t, W_t) & \geq \Pi_{t}^{t,t+\tau,A_t;Z_{t} \otimes \alpha^\epsilon(v), W_{t} \otimes v}  \\
&~~~~~  \Bigl [ \mathbb{L}(X_{t+\tau}^{t,A_t; Z_{t} \otimes \alpha^\epsilon(v), W_t \otimes v}; (Z_t \otimes \alpha^\epsilon(v))_{t+\tau}, (W_t \otimes v)_{t+\tau} ) \Bigr ] - \epsilon.
	\end{align*}
The definition of $\Pi$ implies
\begin{align}	
\label{eq_3_15_1}
 \mathbb{L}(A_t; Z_t, W_t) - \mathbb{L}(A_{t+\tau}; Z_{t+\tau}, W_{t+\tau})  \geq  L^{(1)} + L^{(2)} - L^{(3)} - \epsilon,
\end{align}
where
\begin{align*}
L^{(1)} & := 	\Pi_{t}^{t,t+\tau,A_t;Z_{t} \otimes \alpha^\epsilon(v), W_{t} \otimes v}  \\
&~~~~~ \Bigl [ \mathbb{L}(X_{t+\tau}^{t,A_t; Z_{t} \otimes \alpha^\epsilon(v), W_t \otimes v};(Z_t \otimes \alpha^\epsilon (v))_{t+\tau},  (W_t \otimes v)_{t+\tau} ) \Bigr ] \\
&~~~ - \Pi_{t}^{t,t+\tau,A_t; Z_{t} \otimes \alpha^\epsilon(v), W_{t} \otimes v}  \Bigl [\mathbb{L}(A_{t+\tau}; (Z_t \otimes \alpha^\epsilon(v))_{t+\tau}, (W_t \otimes v)_{t+\tau}) \Bigr ] 
\\
L^{(2)} & := \Pi_{t}^{t,t+\tau, A_t;Z_{t} \otimes \alpha^\epsilon (v), W_{t} \otimes v}  \Bigl [\mathbb{L}(A_{t+\tau}; (Z_t \otimes \alpha^\epsilon (v))_{t+\tau}, (W_t \otimes v)_{t+\tau}) \Bigr ] \\
&~~~ - \Pi_{t}^{t,t+\tau, A_t; Z_{t} \otimes \alpha^\epsilon (v), W_{t} \otimes v}  \Bigl [\mathbb{L}(A_{t+\tau}; Z_{t+\tau}, W_{t+\tau}  )\Bigr ]
\\
L^{(3)} & :=  \Pi_{t}^{t,t+\tau,A_t;Z_{t} \otimes \alpha^\epsilon (v), W_{t} \otimes v}  \Bigl [\mathbb{L}(A_{t+\tau}; Z_{t+\tau}, W_{t+\tau}  )\Bigr ] \\
&~~~ - \mathbb{L}(A_{t+\tau}; Z_{t+\tau}, W_{t+\tau}) .
\end{align*}
Note that $L^{(i)} \geq - |L^{(i)}|$ for $i=1,2,3$. 

Now, Lemmas \ref{Lemma_1} and \ref{Lemma_2}, the definition of $\Pi$, and Jensen's inequality imply that there exist a constant $C>0$ such that
\begin{align}
\label{eq_3_15}
|L^{(1)}| & \leq C \mathbb{E} \Bigl [ \bigl | \mathbb{L}(X_{t+\tau}^{t,A_t;Z_{t} \otimes \alpha^\epsilon (v), W_t \otimes v};(Z_t \otimes \alpha^\epsilon (v))_{t+\tau},  (W_t \otimes v)_{t+\tau} ) \\
&~~~~~~~~~~ - \mathbb{L}(A_{t+\tau}; (Z_t \otimes \alpha^\epsilon (v))_{t+\tau}, (W_t \otimes v)_{t+\tau}) \bigr |^2  \bigl | \mathcal{F}_t \Bigr ]^{1/2} \nonumber \\
& \leq C \mathbb{E} \Bigl [ \bigl \| X_{t+\tau}^{t,A_t;Z_{t} \otimes \alpha^\epsilon (v), W_t \otimes v}  - A_{t+\tau} \bigr \|_\infty^2 \bigl | \mathcal{F}_t \Bigr]^{1/2}  \leq C (1 + \|A_{t+\tau}\|_\infty)\tau^{1/2}. \nonumber
\end{align}
Moreover, from the definition of $\Pi$ and Proposition \ref{Proposition_1}, $L^{(3)}$ is equivalent to
\begin{align*}
	L^{(3)} 
& = \mathbb{E} \Bigl [ \int_t^{t+\tau} l(X_r^{t,A_t;Z_t \otimes \alpha^\epsilon(v), W_t \otimes v}, y_r^{t,t+\tau,A_t; Z_t \otimes \alpha^\epsilon(v), W_t \otimes v}, \\
&~~~~~~~~~~ q_r^{t,t+\tau,A_t; Z_t \otimes \alpha^\epsilon(v), W_t \otimes v}, (Z_t \otimes \alpha^\epsilon (v))_{r}, (W_t \otimes v)_{r}) \dd r \bigl | \mathcal{F}_t \Bigr ].
\end{align*}
Then H\"older inequality, Assumption \ref{Assumption_1}, Lemma \ref{Lemma_1} imply that
\begin{align}
\label{eq_3_16}
	|L^{(3)}| 
& \leq C \tau^{1/2} \mathbb{E} \Bigl [ \int_t^{t+\tau} \bigl [ 1 + \|X_r^{t,A_t; Z_t \otimes \alpha^\epsilon(v), W_t \otimes v}\|^2_{\infty} + |y_r^{t,t+\tau,A_t; Z_t \otimes \alpha^\epsilon(v), W_t \otimes v}|^2 \nonumber \\
&~~~~~~~~~~ + |q_r^{t,t+\tau,A_t;Z_t \otimes \alpha^\epsilon(v), W_t \otimes v}|^2 \bigr ]\dd r \bigl | \mathcal{F}_t \Bigr ]^{1/2}  \leq C ( 1 + \|A_{t+\tau}\|_\infty)\tau^{1/2}.
\end{align}
Furthermore, in view of the definitions of the lower value functional in (\ref{eq_2_6}) and the objective functional in (\ref{eq_2_5}), we have
\begin{align*}	
& \mathbb{L}(A_{t+\tau}; Z_{t+\tau}, W_{t+\tau}  ) \\
&= \essinf_{\alpha  \in \mathcal{A}[t+\tau,T]} \esssup_{ v \in \mathcal{V}[t+\tau,T]} J(t+\tau, A_{t+\tau}; Z_{t+\tau} \otimes \alpha(v), W_{t+\tau} \otimes v) \\
&\mathbb{L}(A_{t+\tau}; (Z_t \otimes \alpha(v))_{t+\tau}, (W_t \otimes v)_{t+\tau}) \\
& = \essinf_{\alpha  \in \mathcal{A}[t+\tau,T]} \esssup_{ v \in \mathcal{V}[t+\tau,T]} J(t+\tau, A_{t+\tau}; Z_{t} \otimes \alpha(v), W_{t} \otimes v).
\end{align*} 
From (iii) of Lemmas \ref{Lemma_1}, Lemma \ref{Lemma_2}, (\ref{eq_2_4}), and the definition of $\Pi$, we have
\begin{align}
\label{eq_3_17}
|L^{(2)}|  
& \leq C \mathbb{E} \Big [ \bigl | \mathbb{L}(A_{t+\tau}; (Z_t \otimes \alpha^\epsilon(v))_{t+\tau}, (W_t \otimes v)_{t+\tau})  \\
&~~~~~~~~~~ - \mathbb{L}(A_{t+\tau}; Z_{t+\tau}, W_{t+\tau} )\bigr |^2 | \mathcal{F}_t \Big]^{1/2} \nonumber \\
& \leq C \mathbb{E} \Bigl [ \int_{t+\tau}^T \bigl [ \|(Z_{t+\tau} \otimes \alpha^\epsilon (v))_r 	- (Z_t \otimes \alpha^\epsilon (v))_r \|^2_{\infty}   \\
&~~~~~~~~~~ + \|(W_{t+\tau} \otimes v)_r 	- (W_t \otimes v)_r \|^2_{\infty}  \bigr ] \dd r \bigl | \mathcal{F}_t \Bigr ]^{1/2}  = 0. \nonumber
\end{align}

By substituting (\ref{eq_3_15})-(\ref{eq_3_17}) into (\ref{eq_3_15_1}), 
\begin{align*}	
\mathbb{L}(A_t; Z_t, W_t)  - \mathbb{L}(A_{t+\tau}; Z_{t+\tau}, W_{t+\tau})\geq - C (1 + \|A_{t+\tau} \|_{\infty})	\tau^{1/2} - \epsilon.
\end{align*}
Hence, the arbitrariness of $\epsilon$ implies the first inequality part in (\ref{eq_3_13}). The second inequality part in (\ref{eq_3_13}) can be shown in a similar way. We complete the proof.
\end{proof}


\section{State and Control Path-Dependent Hamilton-Jacobi-Isaacs Equations and Viscosity Solutions}\label{Section_4}

In this section, we introduce the lower and upper state and control path-dependent Hamilton-Jacobi-Isaacs (PHJI) equations that are path-dependent nonlinear second-order PDEs (PPDEs). We show that the (lower and upper) value functionals are viscosity solutions of the corresponding PHJI equations.

The Hamiltonian, $\mathcal{H}:\Lambda \times \hat{\Lambda} \times \mathbb{R} \times \mathbb{R}^{n} \times \mathbb{S}^n \rightarrow \mathbb{R}$, is defined by
\begin{align}
\label{eq_5_1}
& \mathcal{H}(A_t,Z_t^{u}, W_t^{v}, y, p, P)	 \\
& =  \langle f(A_t,Z_t^{u}, W_t^{v}), p \rangle  + l(A_t, y, \langle p, \sigma(A_t,Z_t^{u}, W_t^{v}) \rangle, Z_t^{u}, W_t^{v}) \nonumber \\
&~~~ + \frac{1}{2} \Tr (P \sigma(A_t, Z_t^{u}, W_t^{v}) \sigma^\top(A_t, Z_t^{u}, W_t^{v})), \nonumber
\end{align}
where
\begin{align}
\label{eq_5_1_1_1_1_1_1_1}
Z_t^{u}[s] := \begin{cases}
 	z_s, & \text{if}~ s \in [0,t) \\
 u & \text{if}~ s = t,
 \end{cases}	~~ W_t^{v}[s] := \begin{cases}
 	w_s, & \text{if}~ s \in [0,t) \\
 v & \text{if}~ s = t.
 \end{cases}	
\end{align}
With (\ref{eq_5_1}), we introduce the lower PHJI equation
\begin{align}
\label{eq_5_2}
\begin{cases}
\mathbb{H}^-(A_t,Z_t, W_t, (\partial_t \mathbb{L}^{u,v}, \mathbb{L}, \partial_x \mathbb{L}, \partial_{xx} \mathbb{L})(A_t;Z_t, W_t) ) \\
: = \sup_{v \in \mathrm{V}}	 \inf_{u \in \mathrm{U}} \Bigl \{	\partial_t \mathbb{L}(A_t;Z_t^{u}, W_t^{v}) \\
~~~~~~~~~~+ \mathcal{H}(A_t,Z_t^u, W_t^v,(\mathbb{L}, \partial_x \mathbb{L}, \partial_{xx} \mathbb{L})(A_t;Z_t, W_t) ) \Bigr \} = 0 ,\\
~~~~~~~~~~~~ t \in [0,T),~ (A_t,Z_t, W_t) \in \Lambda \times \hat{\Lambda} \\
	\mathbb{L}(A_T;Z_T, W_T) = m(A_T),~ (A_T,Z_T, W_T) \in \Lambda_T \times \hat{\Lambda}_T,
\end{cases}
\end{align}
and the upper PHJI equation
\begin{align}
\label{eq_5_3}
\begin{cases}
\mathbb{H}^+(A_t,Z_t, W_t, (\partial_t \mathbb{U}^{u,v}, \mathbb{U}, \partial_x \mathbb{U}, \partial_{xx} \mathbb{U})(A_t;Z_t, W_t) ) \\
: = \inf_{u \in \mathrm{U}} \sup_{v \in \mathrm{V}} \Bigl \{ \partial_t \mathbb{U}(A_t;Z_t^{u}, W_t^{v}) \\
~~~~~~~~~~ + \mathcal{H}(A_t,Z_t^u, W_t^v,(\mathbb{U}, \partial_x \mathbb{U}, \partial_{xx} \mathbb{U})(A_t;Z_t, W_t) ) \Bigr \} = 0,\\
~~~~~~~~~~~~ t \in [0,T),~ (A_t, Z_t, W_t) \in \Lambda \times \hat{\Lambda} \\
	\mathbb{U}(A_T;Z_T, W_T) = m(A_T),~ (A_T, Z_T, W_T) \in \Lambda_T \times \hat{\Lambda}_T.
\end{cases}	
\end{align}

\begin{remark}\label{Remark_5_1}
\begin{enumerate}[(1)]
	\item In (\ref{eq_5_2}), $\partial_t \mathbb{L}^{u,v}(A_t;Z_t, W_t) := \partial_t \mathbb{L}(A_t;Z_t^u, W_t^u)$. From Section \ref{Section_1_1}, the time derivative of $\mathbb{L}$ in $(A_t,Z_t,W_t)$ (if it exists) can be written as follows:
	\begin{align*}
	\partial_t \mathbb{L}(A_t;Z_t^u, W_t^u) = \lim_{\delta t \downarrow 0} \frac{\mathbb{L}(A_{t,\delta t}; Z_{t,\delta t}, W_{t,\delta t}) - \mathbb{L}(A_t;Z_t,W_t)}{\delta t},
	\end{align*}
	where $(Z_t^u,W_t^v)$ is induced due to the definition of $\otimes$ in (\ref{eq_2_4}) (see also \cite{Saporito_SICON_2019}). The space derivative of $\mathbb{L}$ with respect to $A_t$ is given by $		\partial_x \mathbb{L}(A_t;Z_t,W_t) = \begin{bmatrix}
 				\partial_x^1 \mathbb{L}(A_t;Z_t,W_t) & \cdots & \partial_x^n \mathbb{L}(A_t;Z_t,W_t)
 			\end{bmatrix}^\top$ (if it exists), where
	\begin{align*}	
		\partial_x^i \mathbb{L}(A_t;Z_t,W_t) &= \lim_{h \downarrow 0} \frac{\mathbb{L}(A_{t}^{(he_i)}; Z_{t}, W_{t}) - \mathbb{L}(A_t;Z_t,W_t)}{h}.
	\end{align*}
	Note that by the definition in  (\ref{eq_2_4}), we have $\mathbb{L}(A_t;Z_t,W_t) = \mathbb{L}(A_t;Z_t^{(he_i)},W_{t})$ and $\mathbb{L}(A_t;Z_t,W_t) = \mathbb{L}(A_t;Z_t,W_{t}^{(he_i)})$, which implies that $\mathbb{L}$ satisfies the \emph{predictable dependence} condition in the sense of \cite{Cont_JFA_2010}. Hence, the space derivative of $\mathbb{L}$ with respect to the control of the players is zero; see \cite[Remark 4]{Cont_JFA_2010} and \cite[Remark 2.3]{Saporito_SICON_2019}. The same argument applies to (\ref{eq_5_3}).
	\item If there is a functional in $\mathcal{C}^{1,2}(\Lambda; \hat{\Lambda})$ in the sense of Definition \ref{Definition_0_2}, which solves (\ref{eq_5_2}), then it is a \emph{classical solution} of (\ref{eq_5_2}). Also, similar to \cite{Ekren_AP_2014, Peng_Gexpectation_arXiv_2011}, the classical sub-solution (respectively, super-solution) is  defined if the \enquote{$=0$} in (\ref{eq_5_2}) is replaced by \enquote{$\geq 0$} (respectively, \enquote{$\leq 0$}). When there is a classical solution of (\ref{eq_5_2}), it means that it is both classical sub- and super-solutions. The same argument can be applied to the upper PHJI equation in (\ref{eq_5_3}). 
\end{enumerate}
\end{remark}

\begin{remark}\label{Remark_5_1_1}
For the state path-dependence case (see Remark \ref{Remark_3_7}), (\ref{eq_5_2}) and (\ref{eq_5_3}) are reduced to the state path-dependent HJI equations in (\ref{eq_6_1}) and (\ref{eq_6_2}). In addition, in the Markovian formulation (see Remark \ref{Remark_3_7}), the (lower and upper) PHJI equations are equivalent to those in \cite[(4.1) and (4.2)]{Buckdahn_SICON_2008}
\end{remark}

We fix $\kappa \in (0,\frac{1}{2})$ in the $\kappa$-H\"older modulus. The notion of the viscosity solution is given belowas follows, which was first introduced in \cite{Tang_DCD_2015} for the state path-dependent case.

\begin{definition}\label{Definition_5_2}
	\begin{enumerate}[(i)]
		\item A real-valued functional $\mathbb{L} \in \mathcal{C}(\Lambda; \hat{\Lambda})$ is said to be a viscosity sub-solution of the lower PHJI equation in (\ref{eq_5_2}) if for $(A_T,Z_T,W_T) \in \Lambda_T \times \hat{\Lambda}_T$ and $\mu, \mu_0 > 0$, $\mathbb{L}(A_T; Z_T,W_T) \leq m(A_T)$ and for all test functions $\phi \in \mathcal{C}^{1,2}_{\kappa}(\Lambda; \hat{\Lambda})$ satisfying the predictable dependence in the sense of \cite{Cont_JFA_2010}, i.e., $\phi(A_t;Z_t,W_t) = \phi(A_t;Z_{t-},W_{t-})$ and $0 = (\mathbb{L} - \phi)(\bar{A}_t;Z_t,W_t) = \sup_{A_s \in \mathbb{C}^{\kappa,\mu,\mu_0}} (\mathbb{L} - \phi)(A_s;Z_t,W_t)$, 
		where $\bar{A}_t \in \mathbb{C}^{\kappa,\mu,\mu_0}$, the following inequality holds:
		\begin{align*}
			& \liminf_{\mu \rightarrow \infty}  \mathbb{H}^-(\bar{A}_t,Z_t, W_t, (\partial_t \phi^{u,v}, \phi, \partial_x \phi, \partial_{xx} \phi)(\bar{A}_t;Z_t, W_t) )  \geq 0.
		\end{align*}
		\item A real-valued functional $\mathbb{L} \in \mathcal{C}(\Lambda; \hat{\Lambda})$ is said to be a viscosity super-solution of the lower PHJI equation in (\ref{eq_5_2}) if for $(A_T,Z_T,W_T) \in \Lambda_T \times \hat{\Lambda}_T$ and $\mu, \mu_0 > 0$, $\mathbb{L}(A_T; Z_T,W_T) \geq m(A_T)$ and for all test functions $\phi \in \mathcal{C}^{1,2}_{\kappa}(\Lambda; \hat{\Lambda})$ satisfying  the predictable dependence in the sense of \cite{Cont_JFA_2010}, i.e., $\phi(A_t;Z_t,W_t) = \phi(A_t;Z_{t-},W_{t-})$ and $0 = (\mathbb{L} - \phi)(\bar{A}_t; Z_t,W_t) = \inf_{A_s \in \mathbb{C}^{\kappa,\mu,\mu_0}} (\mathbb{L} - \phi)(A_s; Z_t,W_t)$, 
		where $\bar{A}_t \in \mathbb{C}^{\kappa,\mu,\mu_0}$, the following inequality holds:
		\begin{align*}
			& \limsup_{\mu \rightarrow \infty}  \mathbb{H}^-(\bar{A}_t,Z_t, W_t, (\partial_t \phi^{u,v}, \phi, \partial_x \phi, \partial_{xx} \phi)(\bar{A}_t;Z_t, W_t) ) \leq 0.
		\end{align*}
	\item A real-valued functional $\mathbb{L} \in \mathcal{C}(\Lambda; \hat{\Lambda})$ is said to be a viscosity solution if it is both a viscosity sub-solution and super-solution of (\ref{eq_5_2}).
	\item The viscosity sub-solution, super-solution and solution of the upper PHJI equation in (\ref{eq_5_3}) are defined in similar ways.
	\end{enumerate}
\end{definition}

\begin{remark}\label{Remark_5_3}
\begin{enumerate}[(1)]
	\item In Definition \ref{Definition_5_2}, in view of Remark \ref{Remark_5_1}, $\partial_t \phi^{u,v}(\bar{A}_t;Z_t,W_t) := \partial_t \phi(\bar{A}_t;Z_t^u,W_t^u)$. 
	For the Markovian case, Definition \ref{Definition_5_2} is equivalent to that of the classical one in \cite{Buckdahn_SICON_2008, Crandall_AMS_1992_Viscosity, Touzi_Book}. Moreover, we can easily check that if the viscosity solution of (\ref{eq_5_2}) further belongs to $\mathcal{C}_{\kappa}^{1,2}(\Lambda; \hat{\Lambda})$ satisfying the predictable dependence, then it is also the classical solution of (\ref{eq_5_2}). The same argument applies to (\ref{eq_5_3}). This implies that when the (lower and upper) value functionals are in $\mathcal{C}_{\kappa}^{1,2}(\Lambda; \hat{\Lambda})$, they are classical solutions of the (lower and upper) PHJI equations.
	\item The definition of viscosity solutions in Definition \ref{Definition_5_2} is different from that in \cite{Ekren_AP_2014, Ekren_AP_2016_Part_I, Ekren_AP_2016}, which was applied to SZSDGs in weak formulation in \cite{Pham_SICON_2014, Possamai_arXiv_2018}. In particular, in \cite{Ekren_AP_2014, Ekren_AP_2016_Part_I, Ekren_AP_2016}, a nonlinear expectation was included in the corresponding semi-jets, which is closely related to a certain class of BSDEs via Feynman-Kac formula. It is interesting to investigate the relationship (or equivalence) between Definition \ref{Definition_5_2} and the definition in \cite{Ekren_AP_2014, Ekren_AP_2016_Part_I, Ekren_AP_2016}. As noted in Section \ref{Section_1} and \cite[Remark 3.7 and p. 10]{Possamai_arXiv_2018}, the general uniqueness in the sense of \cite{Ekren_AP_2014, Ekren_AP_2016_Part_I, Ekren_AP_2016} has not been completely solved, and the SZSDG in weak formulation requires more stringent assumptions on the coefficients than the SZSDG in strong formulation. Since we consider the SZSDG in strong formulation, we use the notion of viscosity solutions in \cite{Tang_DCD_2015}, which was applied to the state path-dependent (one-player) stochastic control problem in strong formulation. A similar definition was also introduced in \cite{Qui_SICON_2018} to study a class of stochastic HJB equations (in strong formulation).
	\end{enumerate}
\end{remark}

\begin{remark}\label{Remark_5_5}
This remark will be used in the proof of Theorem \ref{Theorem_5_4} given below. The condition of the predictable dependence for the test function $\phi$ in Definition \ref{Definition_5_2} is introduced due to the control path-dependent nature of the SZSDG with (\ref{eq_2_4}). Specifically, from the predictable dependence of $\phi$ with resect to the control of the players in the sense of \cite{Cont_JFA_2010}, i.e., $\phi(A_t;Z_t,W_t) = \phi(A_t;Z_{t-},W_{t-})$, and the definition in (\ref{eq_5_1_1_1_1_1_1_1}) and (\ref{eq_2_4}), it holds that $\phi(A_t;Z_t,W_t) = \phi(A_t;Z_t^{(he_i)},W_{t})$ and $\phi(A_t;Z_t,W_t) = \phi(A_t;Z_t,W_{t}^{(he_i)})$ (see also (1) of Remark \ref{Remark_5_1}). Therefore, the (space) derivative of $\phi$ with respect to the control of the players is zero, i.e., $\partial_{u} \phi = 0$, $\partial_{uu}\phi = 0$, $\partial_{v} \phi = 0$ and $\partial_{vv} \phi = 0$. Similar discussions can be found in \cite[Remark 4]{Cont_JFA_2010} and \cite[Remark 2.3]{Saporito_SICON_2019}. We should also mention that for the state path-dependent case (see Remarks \ref{Remark_3_7} and \ref{Remark_5_1_1}), the predictable dependence condition is not needed.
\end{remark}

We state the main result of this section.
\begin{theorem}\label{Theorem_5_4}
Suppose that Assumption \ref{Assumption_1} holds. Then the lower value functional in (\ref{eq_2_6}) is the viscosity solution of the lower PHJI equation in (\ref{eq_5_2}). The upper value functional in (\ref{eq_2_7}) is the viscosity solution of the upper PHJI equation in (\ref{eq_5_3}).	
\end{theorem}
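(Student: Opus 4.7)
The plan is to establish that $\mathbb{L}$ defined in (\ref{eq_2_6}) is a viscosity solution of the lower PHJI equation (\ref{eq_5_2}); the corresponding statement for $\mathbb{U}$ and (\ref{eq_5_3}) is obtained by a symmetric argument with the roles of Players~1 and~2 interchanged. The three central ingredients are the dynamic programming principle from Theorem \ref{Theorem_1}, the functional It\^o formula of Lemma \ref{Lemma_0_7}, and the BSDE comparison principle (iv) of Lemma \ref{Lemma_1}. The predictable dependence built into the test functions in Definition \ref{Definition_5_2} (cf.\ Remark \ref{Remark_5_5}) is what ensures the functional It\^o expansion of $\phi\bigl(X_s;(Z_t\otimes u)_s,(W_t\otimes v)_s\bigr)$ picks up only $\partial_t\phi$, $\partial_x\phi$, $\partial_{xx}\phi$, with the control-path dependence entering solely through the flat extensions (\ref{eq_5_1_1_1_1_1_1_1}) inside $\partial_t\phi$. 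The terminal condition $\mathbb{L}(A_T;Z_T,W_T)=m(A_T)$ is immediate from (\ref{eq_2_6}).

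For the viscosity sub-solution property, fix $\bar{A}_t\in\mathbb{C}^{\kappa,\mu,\mu_0}$ at which $0=(\mathbb{L}-\phi)(\bar{A}_t;Z_t,W_t)=\sup_{A_s\in\mathbb{C}^{\kappa,\mu,\mu_0}}(\mathbb{L}-\phi)(A_s;Z_t,W_t)$ and argue by contradiction: if the liminf of $\mathbb{H}^-$ were strictly negative, then for $\mu$ sufficiently large some $\delta>0$ would satisfy $\sup_{v\in\mathrm{V}}\inf_{u\in\mathrm{U}}\bigl\{\partial_t\phi(\bar{A}_t;Z_t^u,W_t^v)+\mathcal{H}(\bar{A}_t,Z_t^u,W_t^v,\phi,\partial_x\phi,\partial_{xx}\phi)\bigr\}<-4\delta$. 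A Filippov-type measurable selection yields a Borel map $u^\ast:\mathrm{V}\to\mathrm{U}$ with $\partial_t\phi+\mathcal{H}<-3\delta$ at $(u^\ast(v),v)$ for every $v\in\mathrm{V}$; the composition $\alpha^\ast(v)_s:=u^\ast(v_s)$ is then an admissible nonanticipative strategy in $\mathcal{A}[t,t+\tau]$. From $\mathbb{L}=\inf_\alpha\sup_v\Pi$, the DPP (\ref{eq_3_3}) gives $\mathbb{L}(\bar{A}_t;Z_t,W_t)\le\sup_{v\in\mathcal{V}[t,t+\tau]}\Pi_t^{t,t+\tau,\bar{A}_t;Z_t\otimes\alpha^\ast(v),W_t\otimes v}\bigl[\mathbb{L}(X_{t+\tau};\cdot)\bigr]$; invoking $\mathbb{L}\le\phi$ on $\mathbb{C}^{\kappa,\mu,\mu_0}$, BSDE comparison, and a correction for $\{X_{t+\tau}\notin\mathbb{C}^{\kappa,\mu,\mu_0}\}$ that vanishes as $\mu\to\infty$ (via Lemma \ref{Lemma_1}(ii) and Chebyshev), this bound can be transferred to $\phi$. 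The functional It\^o formula applied to $\phi(X_s;(Z_t\otimes\alpha^\ast(v))_s,(W_t\otimes v)_s)$ together with the BSDE (\ref{eq_2_2}) then yields, uniformly in $v\in\mathcal{V}[t,t+\tau]$ by continuity of the coefficients and compactness of $\mathrm{U},\mathrm{V}$,
\[
\Pi_t^{t,t+\tau,\bar{A}_t;\cdots}\bigl[\phi(X_{t+\tau};\cdot)\bigr]-\phi(\bar{A}_t;Z_t,W_t)\le-2\delta\tau+o(\tau).
\]
Taking $\sup_v$ on both sides contradicts $\mathbb{L}(\bar{A}_t;Z_t,W_t)=\phi(\bar{A}_t;Z_t,W_t)$ for $\tau$ sufficiently small.

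For the viscosity super-solution property, fix $\bar{A}_t\in\mathbb{C}^{\kappa,\mu,\mu_0}$ at the minimum of $\mathbb{L}-\phi$ and any constant $v_0\in\mathrm{V}$. Restricting the outer supremum in (\ref{eq_3_3}) to the constant process $v\equiv v_0$ and noting that constant strategies are admissible, one obtains $\mathbb{L}(\bar{A}_t;Z_t,W_t)\ge\inf_{u\in\mathcal{U}[t,t+\tau]}\Pi_t^{t,t+\tau,\bar{A}_t;Z_t\otimes u,W_t\otimes v_0}\bigl[\mathbb{L}(X_{t+\tau};\cdot)\bigr]$. Combining $\mathbb{L}\ge\phi$ on the compact set with BSDE comparison (again with the vanishing exit-probability correction) and the functional It\^o expansion of $\phi(X_s;\cdots)$ for constants $u\in\mathrm{U}$ yields
\[
\inf_{u\in\mathrm{U}}\Pi_t^{t,t+\tau,\bar{A}_t;\cdots}\bigl[\phi(X_{t+\tau};\cdot)\bigr]=\phi(\bar{A}_t;Z_t,W_t)+\tau\inf_{u\in\mathrm{U}}\bigl\{\partial_t\phi(\bar{A}_t;Z_t^u,W_t^{v_0})+\mathcal{H}(\bar{A}_t,Z_t^u,W_t^{v_0},\ldots)\bigr\}+o(\tau),
\]
uniformly in $u$ by compactness of $\mathrm{U}$. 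Combined with $\mathbb{L}(\bar{A}_t;Z_t,W_t)=\phi(\bar{A}_t;Z_t,W_t)$, dividing by $\tau$ and letting $\tau\downarrow 0$ gives $\inf_{u\in\mathrm{U}}\bigl\{\partial_t\phi+\mathcal{H}\bigr\}(\bar{A}_t,Z_t^u,W_t^{v_0})\le 0$ for each constant $v_0\in\mathrm{V}$; taking $\sup_{v_0\in\mathrm{V}}$ followed by $\limsup_{\mu\to\infty}$ yields $\limsup_{\mu\to\infty}\mathbb{H}^-(\bar{A}_t,\cdots)\le 0$, completing the super-solution part.

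The three principal obstacles to this plan are: (a) the state process $X^{t,\bar{A}_t;\cdots}$ exits $\mathbb{C}^{\kappa,\mu,\mu_0}$ with positive probability on any time interval, which is precisely why Definition \ref{Definition_5_2} uses $\liminf_{\mu\to\infty}$ and $\limsup_{\mu\to\infty}$, with quantitative control coming from the $\kappa$-H\"older regularity estimate in Lemma \ref{Lemma_1}(ii); (b) the measurability and nonanticipativity of the strategy $\alpha^\ast$ produced by the Filippov-type selection, which relies on the compact-metric structure of $\mathrm{U},\mathrm{V}$ and the c\`adl\`ag admissibility in Definition \ref{Definition_1}; and (c) the genuine control-dependence of $\partial_t\phi$ in the Hamiltonian, a distinctive feature of the state-and-control path-dependent formulation, which necessitates the predictable-dependence restriction on test functions so that no $\partial_u\phi$ or $\partial_v\phi$ terms contaminate the functional It\^o expansion or the limit identification of the Hamiltonian.
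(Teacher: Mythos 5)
Your overall architecture is the same as the paper's: DPP (Theorem \ref{Theorem_1}) plus the functional It\^o formula (Lemma \ref{Lemma_0_7}) and BSDE comparison (Lemma \ref{Lemma_1}(iv)), with one half proved directly and the other by contradiction through a measurable selection $\mathrm{V}\to\mathrm{U}$ turned into a nonanticipative strategy $v\mapsto u^\ast(v_s)$ --- this is exactly the paper's map $\gamma$. But there is a genuine gap in how you handle the exit of the state path from the compact set $\mathbb{C}^{\kappa,\mu,\mu_0}$. You never introduce the perturbed initial path $\bar{A}_t^\epsilon$, and you claim the correction on $\{X_{t+\tau}\notin\mathbb{C}^{\kappa,\mu,\mu_0}\}$ ``vanishes as $\mu\to\infty$ via Lemma \ref{Lemma_1}(ii) and Chebyshev.'' Lemma \ref{Lemma_1}(ii) only gives sup-norm moments, so it controls $\mathbb{P}(\|X_{t+\tau}\|_\infty>\mu_0)$ but says nothing about $\mathbb{P}\bigl([\![ X_{t+\tau} ]\!]_{\kappa}>\mu\bigr)$, which is the part that actually matters: the extremizer $\bar{A}_t$ of $\mathbb{L}-\phi$ over $\mathbb{C}^{\kappa,\mu,\mu_0}$ depends on $\mu$ and may saturate $[\![ \bar{A}_t ]\!]_{\kappa}=\mu$, in which case the Brownian-driven continuation leaves the H\"older ball immediately with non-vanishing probability and your ``correction'' does not go to zero. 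The paper's proof hinges on replacing $\bar{A}_t$ by $\bar{A}_t^\epsilon$ (the construction before Lemma \ref{Lemma_5_5}, flagged as essential), on the stopping time $\xi^\epsilon$, and on the quantitative estimate $\mathbb{P}([\![ X_{t+\tau}^{t,\bar A_t^\epsilon;\cdot} ]\!]_\kappa>\mu)\le C\tau^{d(\frac12-\kappa)}\epsilon^{-d}$ of Lemma \ref{Lemma_5_5}(iii), paying the perturbation cost $C(4\mu_0\epsilon\mu^{-1})^\kappa$ through Lemma \ref{Lemma_2}, and then balancing $\epsilon$, $\tau$, $\mu$ (e.g.\ $\tau=\mu^{-\kappa/2}$) so that \emph{all} error terms, including $(4\mu_0\epsilon\mu^{-1})^{\kappa}/\tau$, vanish in the $\liminf/\limsup_{\mu\to\infty}$ of Definition \ref{Definition_5_2}. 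Your $o(\tau)$ bookkeeping never confronts this $(\epsilon,\tau,\mu)$ interplay, so the limit passage in both halves of your argument is not justified as written.

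A secondary, fixable slip is in your super-solution display: the DPP lower bound obtained by freezing $v\equiv v_0$ involves $\essinf$ over admissible control \emph{processes} $u\in\mathcal{U}[t,t+\tau]$, not over constants $u\in\mathrm{U}$, and $\inf_{\text{processes}}\le\inf_{\text{constants}}$ goes the wrong way for the chain of inequalities you write. The correct route (and essentially the paper's) is to bound the It\^o/BSDE expansion from below \emph{for every} admissible process $u$ by $\inf_{u\in\mathrm{U}}\{\partial_t\phi+\mathcal{H}\}(\bar{A}_t;Z_t^{u},W_t^{v_0})$ pointwise in the integrand (using continuity of $F$ in $d_\infty$ and right-continuity of the c\`adl\`ag controls), rather than to pass to constant controls; with that change, your freezing of $v$ at constants $v_0$ is a legitimate and slightly more explicit variant of the paper's ``arbitrariness of $v$'' step, and your contradiction argument for the sub-solution matches the paper's use of $\gamma$.
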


Before proving the theorem, for $\mu > 0$, $\epsilon \in (0,\mu)$, $r \in [0,t]$ and $A_t \in \mathbb{C}^{\kappa,\mu,\mu_0}$, let $A_t^\epsilon$ be the perturbed version of $A_t$ defined by
\begin{align*}	
a_r^\epsilon := \begin{cases}
	a_r, & \text{if}~ |a_r - a_t| \leq (\mu - \epsilon)|r-t|^{\kappa} \\
	a_t + (\mu - \epsilon)(t-r)^\kappa \frac{a_r - a_t}{|a_r - a_t|}, & \text{if}~ |a_r - a_t| \geq (\mu - \epsilon)|r-t|^{\kappa}. 
\end{cases}
\end{align*}
Note that $A_t^\epsilon := \{a_r^\epsilon,~ r \in [0,t]\}$. The perturbation is essential to prove Theorem \ref{Theorem_5_4}; see \cite[Remark 6]{Tang_DCD_2015}.

We state the following lemma, whose proof is given in \cite[Lemma 5.1]{Tang_DCD_2015}.

\begin{lemma}\label{Lemma_5_5}
Let $\mu,\mu_0 > 0$. Assume that $[\![ A_t ]\!]_{\kappa} \leq \mu$, $\|A_t\|_{\infty} \leq \mu_0$, i.e., $A_t \in \mathbb{C}^{\kappa,\mu,\mu_0}$, and $\epsilon \in (0,\frac{1}{2}\mu]$. Then we have
	\begin{enumerate}[(i)]
		\item $\|A_t^\epsilon - A_t \|_\infty \leq 2 \mu_0 \epsilon (\mu - \epsilon)^{-1} \leq 4 \mu_0 \epsilon \mu^{-1}$.
		\item $[\![ A_t^\epsilon ]\!]_{\kappa} \leq \mu$.
		\item There exists a constant $C > 0$, independent of $\mu$, such that for any $d$ with $\frac{d}{2}(\frac{1}{2} - \kappa) > 1$ and $t + \tau < T$, $\mathbb{P}  ( 	[\![ X_{t+\tau}^{t,A_t^\epsilon;Z_t \otimes u, W_t \otimes v} ]\!]_{\kappa} > \mu  ) \leq C \tau ^{d(\frac{1}{2} - \kappa)} \epsilon^{-d}$.
	\end{enumerate}
\end{lemma}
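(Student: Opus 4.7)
The three parts of Lemma~\ref{Lemma_5_5} are largely independent, so I will tackle them one at a time, treating (i) and (ii) as deterministic calculations on the path $A_t$ and saving the probabilistic estimate for (iii).

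For (i), I will argue pointwise in $r \in [0,t]$. In the regime $|a_r - a_t| \leq (\mu-\epsilon)|r-t|^\kappa$ the perturbation is trivial and contributes nothing. In the complementary regime, $a_r^\epsilon$ is by construction the radial projection of $a_r$ onto the sphere of radius $(\mu-\epsilon)(t-r)^\kappa$ centered at $a_t$, so $|a_r^\epsilon - a_r| = |a_r - a_t| - (\mu-\epsilon)(t-r)^\kappa$. Using $[\![A_t]\!]_\kappa \leq \mu$ this is bounded by $\epsilon (t-r)^\kappa$, and simultaneously the second-case condition combined with $|a_r - a_t| \leq 2\mu_0$ forces $(t-r)^\kappa \leq 2\mu_0/(\mu-\epsilon)$. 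Multiplying gives $|a_r^\epsilon - a_r| \leq 2\mu_0\epsilon/(\mu-\epsilon)$, and the assumption $\epsilon \leq \mu/2$ promotes this to $\leq 4\mu_0 \epsilon/\mu$.

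For (ii), I will examine two points $s < r \leq t$ and do a three-case split: both lie in the unperturbed regime (in which case $|a_s^\epsilon - a_r^\epsilon| = |a_s-a_r| \leq \mu|s-r|^\kappa$ by the hypothesis); exactly one (say $s$) is in the projected regime; or both are. In the mixed case, one writes $a_s^\epsilon - a_r = (a_s^\epsilon - a_s) + (a_s - a_r)$ and uses (i) together with the fact that $|a_s^\epsilon - a_s|$ is bounded by $\epsilon(t-s)^\kappa$ while $(t-s)^\kappa \leq (t-r)^\kappa + |s-r|^\kappa$ so the projection stays inside the $\mu$-Hölder cone. In the fully projected case I will expand $a_s^\epsilon - a_r^\epsilon = (\mu-\epsilon)\bigl[(t-s)^\kappa \xi_s - (t-r)^\kappa \xi_r\bigr]$ where $\xi_\cdot := (a_\cdot - a_t)/|a_\cdot - a_t|$, decompose into radial and angular parts via $(t-s)^\kappa - (t-r)^\kappa$ and $\xi_s - \xi_r$, and use $|(t-s)^\kappa - (t-r)^\kappa| \leq |s-r|^\kappa$ plus the Hölder bound $|\xi_s - \xi_r| \leq 2|a_s - a_r|/\min(|a_s-a_t|,|a_r-a_t|)$ combined with the lower bound on $|a_\cdot - a_t|$ that holds in the projected regime.

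For (iii), the plan is to split the Hölder modulus of $X_{t+\tau}^{t,A_t^\epsilon;Z_t\otimes u, W_t\otimes v}$ into three contributions: both times in $[0,t]$, both in $[t,t+\tau]$, and one in each. The first is bounded by $\mu$ by (ii). For a cross pair $r \in [0,t]$, $s \in (t,t+\tau]$ I will write $|x_s - a_r^\epsilon| \leq |x_s - a_t| + |a_t - a_r^\epsilon|$ and use (ii) to get $|a_t - a_r^\epsilon|/|s-r|^\kappa \leq (\mu-\epsilon)(t-r)^\kappa/(s-r)^\kappa \leq \mu-\epsilon$, leaving a buffer of $\epsilon$. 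So $\{[\![X_{t+\tau}]\!]_\kappa > \mu\}$ is contained in the event that the forward process has $\sup_{t\leq s\leq r\leq t+\tau}|x_r - x_s|/|r-s|^\kappa$ or $\sup_{s\in(t,t+\tau]}|x_s - a_t|/(s-t)^\kappa$ exceeds $\epsilon$. Since $f,\sigma$ are bounded (Assumption~\ref{Assumption_1}), the BDG inequality yields $\mathbb{E}|x_s - x_{s'}|^{2d} \leq C|s-s'|^d$ for any $d\geq 1$, and then a Garsia–Rodemich–Rumsey / Kolmogorov–Chentsov argument on the interval $[t,t+\tau]$ delivers the desired tail bound $C\tau^{d(\tfrac12-\kappa)}\epsilon^{-d}$ provided $d(\tfrac12-\kappa)>1$ (which is exactly where the integrability hypothesis enters). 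The principal obstacle is keeping the dependence of the Garsia–Rodemich–Rumsey constant independent of $\mu$: I will need to carry through the chaining estimate using $\tau$ as the only scale parameter and verify that the passage from the pointwise $L^{2d}$ bound to the pathwise supremum does not introduce a hidden $\mu$-dependence.
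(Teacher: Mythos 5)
First, note that the paper does not prove this lemma at all: it is imported verbatim from \cite[Lemma 5.1]{Tang_DCD_2015}, so the only comparison available is with that cited proof. Your part (i) is correct and is the standard argument. Your part (iii) is also sound in outline: the three-way split of the pairs, the observation that the cross pairs cost at most $\mu-\epsilon$ plus the H\"older modulus of the forward process on $[t,t+\tau]$, and the BDG--Garsia--Rodemich--Rumsey chain (with $\mathbb{E}|x_s-x_{s'}|^{d}\leq C|s-s'|^{d/2}$ from boundedness of $f,\sigma$, and convergence of the GRR double integral exactly when $d(\tfrac12-\kappa)>1$) deliver $\mathbb{E}\bigl[[\![ X ]\!]_{\kappa;[t,t+\tau]}^{d}\bigr]\leq C\tau^{d(\frac12-\kappa)}$ with $C$ depending only on the bounds of the coefficients, hence not on $\mu$; Chebyshev then gives the stated tail bound.

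The genuine gap is in part (ii), where the claimed constant is exactly $\mu$ and your estimates do not come close to it. In the mixed case your decomposition gives $|a_s^\epsilon-a_r^\epsilon|\leq \epsilon(t-s)^\kappa+\mu|s-r|^\kappa$, and the term $\epsilon(t-s)^\kappa$ is \emph{not} controlled by $|s-r|^\kappa$ (take $r-s$ tiny and $t-r$ of order one), so this does not yield any bound of the form $\mathrm{const}\cdot|s-r|^\kappa$, let alone $\mu|s-r|^\kappa$. In the fully projected case, the crude angular estimate $|\xi_s-\xi_r|\leq 2|a_s-a_r|/\min(|a_s-a_t|,|a_r-a_t|)$ combined with the lower bound $(\mu-\epsilon)(t-r)^\kappa$ on the denominator produces roughly $(3\mu-\epsilon)|s-r|^\kappa$. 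A constant larger than $\mu$ is fatal downstream: the whole viscosity argument localizes on the compact set $\mathbb{C}^{\kappa,\mu,\mu_0}$ and the stopping time $\xi^\epsilon$ is defined by the threshold $[\![ \cdot ]\!]_\kappa>\mu$, so $A_t^\epsilon$ must lie in $\mathbb{C}^{\kappa,\mu}$ exactly. The correct proof uses that $a_u^\epsilon-a_t=P_{R_u}(a_u-a_t)$, the nearest-point projection onto the ball of radius $R_u=(\mu-\epsilon)(t-u)^\kappa$: in the case where $s$ is projected and $r$ is not (with $s<r$, so $R_s\geq R_r$), the point $a_r-a_t$ already lies in the $R_s$-ball and is fixed by $P_{R_s}$, so $1$-Lipschitzness of the projection gives $|a_s^\epsilon-a_r^\epsilon|\leq|a_s-a_r|\leq\mu|s-r|^\kappa$ with no loss; the remaining cases require a law-of-cosines computation that exploits the constraints $|a_u-a_t|\geq R_u$ together with the elementary monotonicity bound $(t-s)^\kappa\bigl[(t-s)^\kappa-(t-r)^\kappa\bigr]\leq(r-s)^{2\kappa}$ to absorb the radial discrepancy $R_s-R_r$ into the slack of the angular term. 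None of this is present in your sketch, and without it the statement $[\![ A_t^\epsilon ]\!]_\kappa\leq\mu$ is not established.
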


The proof of Theorem \ref{Theorem_5_4} is given as follows.

\begin{proof}[Proof of Theorem \ref{Theorem_5_4}]
We first prove that the lower value functional in (\ref{eq_2_6}) is the viscosity super-solution of the lower PHJI equation in (\ref{eq_5_2}). Note that in view of Lemmas \ref{Lemma_2} and \ref{Lemma_3}, it is clear that $\mathbb{L} \in \mathcal{C}(\Lambda;\hat{\Lambda})$. Furthermore, from (\ref{eq_2_6}), we have $\mathbb{L}(A_T;Z_T,W_T) \geq  m(A_T)$.

From the definition of the viscosity super-solution in (ii) of Definition \ref{Definition_5_2} and Lemma \ref{Lemma_0_5}, for $\phi \in \mathcal{C}^{1,2}_{\kappa}(\Lambda; \hat{\Lambda} )$, $\mu > 1$ and $\mu_0 > 0$, 
\begin{align*}
		0 = (\mathbb{L} - \phi)(\bar{A}_t; Z_t,W_t) = \inf_{A_s \in \mathbb{C}^{\kappa,\mu,\mu_0}} (\mathbb{L} - \phi)(A_s; Z_t,W_t),
\end{align*}
where $\bar{A}_t \in \mathbb{C}^{\kappa,\mu,\mu_0}$. Let $\delta := \mu_0 - \|\bar{A}_t\|_\infty > 0$. 

For any $\epsilon \in (0,\frac{1}{2} \wedge \frac{\delta \mu}{8 \mu_0})$, in view of (i) and (ii) in Lemma \ref{Lemma_5_5}, we have $\|\bar{A}_t^\epsilon - \bar{A}_t \|_{\infty} \leq 4 \mu_0 \epsilon \mu^{-1} < \frac{\delta}{2}$. Consider the following $\mathbb{F}$-stopping time:
\begin{align*}
\xi^\epsilon & := \inf \{r > t~:~	[\![ X_r^{t,\bar{A}_t^\epsilon; Z_t \otimes \alpha(v), W_t \otimes v } ]\!]_{\kappa} > \mu \} \\
&~~~ \wedge \inf\{ r > t~:~ \| X_r^{t,\bar{A}_t^\epsilon; Z_t \otimes \alpha(v), W_t \otimes v } \|_\infty > \mu_0 \}.
\end{align*}
By definition, for any $s < \xi^\epsilon$,  $X_{s}^{t,\bar{A}_t^\epsilon; Z_t \otimes \alpha(v), W_t \otimes v } \in \mathbb{C}^{\kappa,\mu,\mu_0}$ and for a small $\tau$ with $t+\tau \leq T$,
\begin{align*}
	\{\xi^{\epsilon} \leq t+\tau \} \subset \{ [\![ X_{t+\tau}^{t,\bar{A}_t^\epsilon; Z_t \otimes \alpha(v), W_t \otimes v} ] \! ]_{\kappa} > \mu \} \cup \{ \| X_{t+\tau}^{t,\bar{A}_t^\epsilon; Z_t \otimes \alpha(v), W_t \otimes v} \|_{\infty}  > \mu_0 \}.
\end{align*}
Hence, from (iii) of Lemma \ref{Lemma_5_5}, we have
\begin{align*}
\mathbb{P} \Bigl ( [\![ X_{t+\tau}^{t,\bar{A}_t^\epsilon; Z_t \otimes \alpha(v), W_t \otimes v} ] \! ]_{\kappa} > \mu \Bigr ) \leq C \tau^{d(\frac{1}{2} - \kappa)} \epsilon^{-d},	
\end{align*}
and by (ii) of Lemma \ref{Lemma_1} and Markov inequality, 
\begin{align*}
	\mathbb{P} \Bigl (\| X_{t+\tau}^{t,\bar{A}_t^\epsilon; Z_t \otimes \alpha(v), W_t \otimes v} \|_{\infty}  > \mu_0  \Bigr ) & \leq 	\mathbb{P} \Bigl (\| X_{t+\tau}^{t,\bar{A}_t^\epsilon; Z_t \otimes \alpha(v), W_t \otimes v} - \bar{A}_{t,t+\tau}^\epsilon \|_{\infty}  > \frac{\delta}{2}  \Bigr ) \\
	  & \leq C (1 + \mu_0^6)\tau^3/\delta^{6}.
\end{align*}
This implies that
\begin{align}
\label{eq_5_4}
\mathbb{P}(	\xi^{\epsilon} \leq t+\tau) \leq C \tau^{d(\frac{1}{2} - \kappa)} \epsilon^{-d} + C (1 + \mu_0^6)\tau^3/\delta^{6} ~\downarrow 0~\text{as $\tau \downarrow 0$}.
\end{align}

Now, from the dynamic programming principle in (\ref{eq_3_3}) of Theorem \ref{Theorem_1}, 
\begin{align}
\label{eq_5_4_1}
& \mathbb{L}(\bar{A}_t^\epsilon; Z_t, W_t) - \phi(\bar{A}_t^\epsilon; Z_t, W_t)\\
\nonumber & = \essinf_{\alpha \in \mathcal{A}[t,t+\tau]} \esssup_{v \in \mathcal{V}[t,t+\tau]} \Pi_{t}^{t,t+\tau,\bar{A}_t^\epsilon; Z_{t} \otimes \alpha(v), W_{t} \otimes v}  \\
\nonumber & ~~~ \Bigl [ \mathbb{L}(X_{t+\tau}^{t,\bar{A}_t^\epsilon; Z_{t} \otimes \alpha(v), W_t \otimes v}; (Z_t \otimes \alpha(v))_{t+\tau}, (W_t \otimes v)_{t+\tau} ) \Bigr ] - \phi(\bar{A}_t^\epsilon; Z_t, W_t).
\end{align}
Note also that
\begin{align*}
& \mathbb{L}(\bar{A}_t^\epsilon; Z_t, W_t) \geq \esssup_{v \in \mathcal{V}[t,t+\tau]}  \essinf_{u \in \mathcal{U}[t,t+\tau]}  \Pi_{t}^{t,t+\tau, \bar{A}_t^\epsilon; Z_{t} \otimes u, W_{t} \otimes v} 	\\
&~~~~~~~~~~ \Bigl [ \mathbb{L}(X_{t+\tau}^{t, \bar{A}_t^\epsilon; Z_{t} \otimes u, W_t \otimes v}; (Z_t \otimes u)_{t+\tau}, (W_t \otimes v)_{t+\tau} ) \Bigr ].
\end{align*}
Then similar to (\ref{eq_4_13_1_1}), for any $\epsilon^\prime > 0$, there exists $u^{\epsilon^\prime} \in \mathcal{U}[t,t+\tau]$ such that for any $v \in \mathcal{V}[t,t+\tau]$,
\begin{align}
\label{eq_5_4_2}
& \mathbb{L}(\bar{A}_t^\epsilon; Z_t, W_t) \geq \Pi_{t}^{t,t+\tau, \bar{A}_t^\epsilon; Z_{t} \otimes u^{\epsilon^\prime}, W_{t} \otimes v}	 \\
&~~~~~~~~~~ \Bigl [ \mathbb{L}(X_{t+\tau}^{t, \bar{A}_t^\epsilon; Z_{t} \otimes u^{\epsilon^\prime}, W_t \otimes v}; (Z_t \otimes u^{\epsilon^\prime})_{t+\tau}, (W_t \otimes v)_{t+\tau} ) \Bigr ] - \epsilon^\prime \tau, \nonumber
\end{align}
where in view of the definition of $\Pi$, $\Pi$ in the above expression can be rewritten as (superscript $t+\tau$ is omitted)
\begin{align}	
\label{eq_5_5}
\begin{cases}
\dd y_s^{t,\bar{A}_t^\epsilon;Z_t \otimes u^{\epsilon^\prime},W_t \otimes v} = - l(X_s^{t,\bar{A}_t^\epsilon; Z_t \otimes u^{\epsilon^\prime}, W_t \otimes v},y_s^{t, \bar{A}_t^\epsilon; Z_t \otimes u^{\epsilon^\prime}, W_t \otimes v}, \\
~~~~~~~~~~~~~~ q_s^{t, \bar{A}_t^\epsilon; Z_t \otimes u^{\epsilon^\prime}, W_t \otimes v},  (Z_t \otimes u^{\epsilon^\prime})_s, (W_t \otimes v)_s) \dd s \\
~~~~~~~~~~ + q_s^{t, \bar{A}_t^\epsilon; Z_t \otimes u^{\epsilon^\prime}, W_t \otimes v} \dd B_s,~ s \in [t,t+\tau) \\
y_{t+\tau}^{t,\bar{A}_t^\epsilon; Z_t \otimes u^{\epsilon^\prime},W_t \otimes v}  = \mathbb{L}(X_{t+\tau}^{t,\bar{A}_t^\epsilon; Z_{t} \otimes u^{\epsilon^\prime}, W_t \otimes v}; (Z_t \otimes u^{\epsilon^\prime})_{t+\tau}, (W_t \otimes v)_{t+\tau} ).
\end{cases}
\end{align}

On the other hand, by using the functional It\^o formula in Lemma \ref{Lemma_0_7}, we have
\begin{align}
\label{eq_5_6}
& \phi(X_s^{t,\bar{A}_t^\epsilon; Z_t \otimes u^{\epsilon^\prime}, W_t \otimes v}; (Z_t \otimes u^{\epsilon^\prime})_s,(W_t \otimes v)_s) 
\\
& = \phi(\bar{A}_t^\epsilon; Z_t, W_t) + \int_t^{s} F(X_r^{t, \bar{A}_t^\epsilon; Z_t \otimes u^{\epsilon^\prime},W_t \otimes v}, (Z_t \otimes u^{\epsilon^\prime})_r, (W_t \otimes v)_r)	 \dd r \nonumber \\
&~~~ - \int_t^s l(X_r^{t, \bar{A}_t^\epsilon; Z_t \otimes u^{\epsilon^\prime}, W_t \otimes v}, \phi(X_r^{t,\bar{A}_t^\epsilon; Z_t \otimes u^{\epsilon^\prime}, W_t \otimes v}; (Z_t \otimes u^{\epsilon^\prime})_r, (W_t \otimes v)_r), \nonumber \\
&~~~~~~~~~~  \langle \partial_x \phi (X_r^{t,\bar{A}_t^\epsilon; Z_t \otimes u^{\epsilon^\prime},W_t \otimes v}; (Z_t \otimes u^{\epsilon^\prime})_r, (W_t \otimes v)_r), \nonumber \\
&~~~~~~~~~~ \sigma(X_r^{t, \bar{A}_t^\epsilon; Z_t \otimes u^{\epsilon^\prime}, W_t \otimes v},(Z_t \otimes u^{\epsilon^\prime} )_r, (W_t \otimes v)_r) \rangle, (Z_t \otimes u^{\epsilon^\prime})_r, (W_t \otimes v)_r) \dd r \nonumber \\
&~~~ + \int_t^{s} \langle \partial_x \phi (X_r^{t,\bar{A}_t^\epsilon; Z_t \otimes u^{\epsilon^\prime}, W_t \otimes v}; (Z_t \otimes u^{\epsilon^\prime} )_r, (W_t \otimes v)_r), \nonumber \\
&~~~~~~~~~~ \sigma(X_r^{t, \bar{A}_t^\epsilon; Z_t \otimes u^{\epsilon^\prime}, W_t \otimes v},(Z_t \otimes u^{\epsilon^\prime} )_r, (W_t \otimes v)_r) \rangle \dd B_r, \nonumber
\end{align}
where
\begin{align*}
F(A_t,Z_t^u, W_t^v)	&= \partial_{t} \phi(A_t; Z_t^u, W_t^v)  + \frac{1}{2} \Tr (\partial_{xx}\phi(A_t; Z_t, W_t) \sigma \sigma^\top(A_t, Z_t^{u}, W_t^{v})) \\
&~~~ + \langle \partial_x \phi (A_t; Z_t, W_t), f(A_t, Z_t^{u}, W_t^{v}) \rangle \\
&~~~ + l(A_t, \phi(A_t; Z_t, W_t), \langle \partial_x \phi (A_t; Z_t, W_t), \sigma(A_t,Z_t^{u}, W_t^{v}) \rangle, Z_t^{u}, W_t^{v}).
\end{align*}
Here, we used the fact that the (space) derivative of $\phi$ with respect to the control of the players is zero as stated in Remark \ref{Remark_5_5}.

Let
\begin{align}
\label{eq_5_7}
\bar{y}_s^{t,\bar{A}_t^\epsilon;Z_t \otimes u^{\epsilon^\prime}, W_t^ \otimes v} & := \phi(X_s^{t,\bar{A}_t^\epsilon; Z_t \otimes u^{\epsilon^\prime}, W_t \otimes v}; (Z_t \otimes u^\epsilon)_s,(W_t \otimes v)_s) \\
&~~~ - y_s^{t,\bar{A}_t^\epsilon; Z_t \otimes u^{\epsilon^\prime}, W_t \otimes v} \nonumber
\end{align}
\begin{align}
\label{eq_5_7_1_1}
\bar{q}_s^{t, \bar{A}_t^\epsilon; Z_t \otimes u^{\epsilon^\prime}, W_t \otimes v} & := \langle \partial_x \phi (X_r^{t,\bar{A}_t^\epsilon; Z_t \otimes u^{\epsilon^\prime}, W_t \otimes v}; (Z_t \otimes u^{\epsilon^\prime} )_r, (W_t \otimes v)_r), \\
&~~~~~~~~~~ \sigma(X_r^{t, \bar{A}_t^\epsilon; Z_t \otimes u^{\epsilon^\prime}, W_t \otimes v},(Z_t \otimes u^{\epsilon^\prime} )_r, (W_t \otimes v)_r) \rangle  \nonumber \\
&~~~ - q_s^{t, \bar{A}_t^\epsilon; Z_t \otimes u^{\epsilon^\prime}, W_t \otimes v}. \nonumber
\end{align}
From (\ref{eq_5_5}) and (\ref{eq_5_6}), 
\begin{align}
\label{eq_5_8}
& \dd \bar{y}_s^{t,\bar{A}_t^\epsilon;Z_t \otimes u^{\epsilon^\prime}, W_t \otimes v} \\
& = F(X_s^{t,\bar{A}_t^\epsilon; Z_t \otimes u^{\epsilon^\prime}, W_t \otimes v}, (Z_t \otimes u^{\epsilon^\prime} )_s, (W_t \otimes v)_s)	 \dd s + \bigl [ - H_s	\bar{y}_s^{t,\bar{A}_t^\epsilon; Z_t \otimes u^{\epsilon^\prime}, W_t \otimes v} \nonumber  \\
&~~~  - \langle \bar{H}_s, \bar{q}_s^{t, \bar{A}_t^\epsilon;  Z_t \otimes u^{\epsilon^\prime}, W_t \otimes v} \rangle \bigr ] \dd s  + \bar{q}_s^{t, \bar{A}_t^\epsilon; Z_t \otimes u^{\epsilon^\prime}, W_t \otimes v} \dd B_s,  \nonumber
\end{align}
where $|H| \leq C$ and $|\bar{H}| \leq C$ due to Assumption \ref{Assumption_1}. We have from (\ref{eq_5_7}),
\begin{align*}
\bar{y}_{t+\tau}^{t,\bar{A}_t^\epsilon; Z_t \otimes u^{\epsilon^\prime}, W_t \otimes v} & = \phi(X_{t+\tau}^{t,\bar{A}_t^\epsilon; Z_t \otimes u^{\epsilon^\prime}, W_t \otimes v}; (Z_t \otimes u^{\epsilon^\prime} )_{t+\tau},(W_t \otimes v)_{t+\tau}) \\
&~~~ - \mathbb{L}(X_{t+\tau}^{t, \bar{A}_t^\epsilon; Z_{t} \otimes u^{\epsilon^\prime}, W_t \otimes v}; (Z_t \otimes u^{\epsilon^\prime})_{t+\tau}, (W_t \otimes v)_{t+\tau} ).
\end{align*}

Notice that (\ref{eq_5_8}) is a linear BSDE; hence, by using Lemma \ref{Lemma_0_7} and \cite[Proposition 4.1.2]{Zhang_book_2017}, its explicit unique solution can be written as follows:
\begin{align}
\label{eq_5_11}
& \bar{y}_t^{t,\bar{A}_t^\epsilon; Z_t \otimes u^{\epsilon^\prime}, W_t \otimes v}  = \mathbb{E} \Bigl [ \bar{y}_{t+\tau}^{t,\bar{A}_t^\epsilon; Z_t \otimes u^{\epsilon^\prime}, W_t \otimes v} \Phi_{t+\tau} \\
&~~~~~~~~~~ - \int_t^{t+\tau} \Phi_{r} F(X_r^{t,\bar{A}_t^\epsilon; Z_t \otimes u^{\epsilon^\prime}, W_t \otimes v}, (Z_t \otimes u^{\epsilon^\prime} )_r, (W_t \otimes v)_r) \dd r \bigl | \mathcal{F}_t \Bigr ], \nonumber
\end{align}
where $\Phi$ is the scalar-valued state transition process given by $\dd \Phi_r =   \Phi_r H_r\dd r + \Phi_r \bar{H}_r \dd B_r$, $r \in (t,t+\tau]$, with $\Phi_{t} = 1$, i.e., $\Phi_r = \exp( \int_t^r \bar{H}_s \dd B_s + \int_t^r [ H_s - \frac{1}{2} |\bar{H}_s|^2] \dd s )$.

From (\ref{eq_5_4_1}) and (\ref{eq_5_4_2}),  together with (\ref{eq_5_11}) and the predictable dependence of $\phi$, 
\begin{align}
\label{eq_5_11_1}
&\mathbb{L}(\bar{A}_t^\epsilon; Z_t, W_t) - \phi(\bar{A}_t^\epsilon; Z_t, W_t) + \epsilon^\prime \tau \\
& \geq \mathbb{E} \Bigl [  - \bar{y}_{t+\tau}^{t,\bar{A}_t^\epsilon; Z_t \otimes u^{\epsilon^\prime}, W_t \otimes v} \Phi_{t+\tau} \nonumber \\
&~~~~~~~~ + \int_t^{t+\tau} \Phi_{r} F(X_r^{t,\bar{A}_t^\epsilon; Z_t \otimes u^{\epsilon^\prime}, W_t \otimes v}, (Z_t \otimes u^{\epsilon^\prime} )_r, (W_t \otimes v)_r) \dd r \bigl | \mathcal{F}_t \Bigr ] \nonumber \\
& = \mathbb{E} \Bigl [ \int_t^{t+\tau} F(\bar{A}_t, (Z_t \otimes u^{\epsilon^\prime} )_r, (W_t \otimes v)_r) \dd r | \mathcal{F}_t \Bigr ] + L^{(1)} + L^{(2)} + L^{(3)}, \nonumber
\end{align}
where
\begin{align*}
L^{(1)} & := 	- \mathbb{E} \Bigl [  \bar{y}_{t+\tau}^{t,\bar{A}_t^\epsilon; Z_t \otimes u^{\epsilon^\prime}, W_t \otimes v} \Phi_{t+\tau} | \mathcal{F}_t \Bigr ] \\
L^{(2)} & := 	\mathbb{E} \Bigl [ \int_t^{t+\tau} F(X_r^{t,\bar{A}_t^\epsilon; Z_t \otimes u^{\epsilon^\prime}, W_t \otimes v}, (Z_t \otimes u^{\epsilon^\prime})_r, (W_t \otimes v)_r) \dd r  \\
&~~~~~~~~~~ - \int_t^{t+\tau} F(\bar{A}_t, (Z_t \otimes u^{\epsilon^\prime} )_r, (W_t \otimes v)_r) \dd r | \mathcal{F}_t \Bigr ] \\
L^{(3)} & := 	\mathbb{E} \Bigl [ \int_t^{t+\tau} \Phi_{r} F(X_r^{t,\bar{A}_t^\epsilon; Z_t \otimes u^{\epsilon^\prime}, W_t \otimes v}, (Z_t \otimes u^{\epsilon^\prime} )_r, (W_t \otimes v)_r) \dd r  \\
&~~~~~~~~~~ -  \int_t^{t+\tau} F(X_r^{t,\bar{A}_t^\epsilon; Z_t \otimes u^{\epsilon^\prime}, W_t \otimes v}, (Z_t \otimes u^{\epsilon^\prime})_r, (W_t \otimes v)_r) \dd r | \mathcal{F}_t \Bigr ].
\end{align*}

In view of (ii) in Lemma \ref{Lemma_1} and the fact that $\Phi$ is the linear SDE, we have
\begin{align*}
\mathbb{E} \bigl [ \|	X_{t+\tau}^{t,\bar{A}_t^\epsilon; Z_t \otimes u^{\epsilon^\prime}, W_t \otimes v} - \bar{A}_{t,\tau}^\epsilon \|_{\infty}^2 \bigl | \mathcal{F}_t \bigr ] & \leq C \tau,~ \mathbb{E} \bigl [ \sup_{r \in [t,t+\tau]} | \Phi_r - 1|^2 | \mathcal{F}_t \bigr ] \leq C \tau.
\end{align*}
Furthermore, due to the property of $\phi \in \mathcal{C}^{1,2}_{\kappa}(\Lambda; \hat{\Lambda} )$ and Assumption \ref{Assumption_1}, for $t_1,t_2 \in [t,T]$ and $A_{t_1}, A_{t_2} \in \Lambda$,
\begin{align*}
 |\phi(A_{t_1}^1; Z, W) - \phi(A_{t_2}^2; Z, W)| & \leq C d_{\infty}^\kappa (A_{t_1}^1,A_{t_2}^2) \\
 | F(A_{t_1}^1, Z, W)  - F(A_{t_2}^2, Z, W) | & \leq C d_{\infty}^\kappa (A_{t_1}^1,A_{t_2}^2).
\end{align*}
Then, from the definition of the viscosity super-solution (ii) in Definition \ref{Definition_5_2}, Lemmas \ref{Lemma_2} and \ref{Lemma_3}, and the property of $\phi$, we have
\begin{align}	
\label{eq_5_12}
& \mathbb{L}(\bar{A}_t^\epsilon; Z_t, W_t) - \phi(\bar{A}_t^\epsilon; Z_t, W_t) \nonumber \\
& = \mathbb{L}(\bar{A}_t; Z_t, W_t) - \phi(\bar{A}_t; Z_t, W_t)  + \mathbb{L}(\bar{A}_t^\epsilon; Z_t, W_t) - \mathbb{L}(\bar{A}_t; Z_t, W_t) \nonumber \\
&~~~ + \phi(\bar{A}_t; Z_t, W_t) - \phi(\bar{A}_t^\epsilon; Z_t, W_t) \nonumber \\
& \leq C \|\bar{A}_t^\epsilon - \bar{A}_t\|_{\infty} + C \|\bar{A}_t^\epsilon - \bar{A}_t\|_{\infty}^\kappa  \leq C (4 \mu_0 \epsilon \mu^{-1})^{\kappa}.
\end{align}

Note that by (\ref{eq_5_7}), (\ref{eq_5_12}), H\"older inequality, Lemma \ref{Lemma_1} and (\ref{eq_5_4}),
\begin{align}
\label{eq_5_13}
| L^{(1)}  |  & \leq \mathbb{P}(\xi^\epsilon < t+\tau)^{\frac{1}{2}} \mathbb{E} \Bigl [ \bigl | \phi(X_{t+\tau}^{t,\bar{A}_t^\epsilon; Z_t \otimes u^{\epsilon^\prime}, W_t \otimes v}; (Z_t \otimes u^{\epsilon^\prime})_{t+\tau} \nonumber \\
&~~~~~~~~~~ - \mathbb{L}(X_{t+\tau}^{t, \bar{A}_t^\epsilon; Z_{t} \otimes u^{\epsilon^\prime}, W_t \otimes v}; (Z_t \otimes u^{\epsilon^\prime})_{t+\tau}, (W_t \otimes v)_{t+\tau} ) \bigr |^2 \Phi_{t+\tau}^2 \bigl | \mathcal{F}_t \Bigr ]^{\frac{1}{2}}  \nonumber \\
& \leq C(1+\mu_0^6) \Bigl (\tau^{\frac{d}{2}(\frac{1}{2}-\kappa)} \epsilon^{-\frac{d}{2}} + \frac{\tau^{\frac{3}{2}}}{\delta^3} \Bigr )(\tau^{\frac{1}{2}}+ \tau^{\frac{\kappa}{2}} + C (4 \mu_0 \epsilon \mu^{-1})^{\frac{\kappa}{2}}).
\end{align}
From (ii) of Lemma \ref{Lemma_1}, we also have
\begin{align}
\label{eq_5_14}
|L^{(2)}| &\leq 
  \mathbb{E} \Bigl [ \int_t^{t+\tau} \bigl | F(\bar{A}_t^\epsilon, (Z_t \otimes u^{\epsilon^\prime})_r, (W_t \otimes v)_r)  \nonumber \\
&~~~~~~~~~~ - F(\bar{A}_t, (Z_t \otimes u^{\epsilon^\prime})_r, (W_t \otimes v)_r) \bigr | \dd r \bigr | \mathcal{F}_t \Bigr ] \nonumber \\
&~~~ + \mathbb{E} \Bigl [ \int_t^{t+\tau} \bigl |  F(X_r^{t,\bar{A}_t^\epsilon; Z_t \otimes u^{\epsilon^\prime}, W_t \otimes v}, (Z_t \otimes u^{\epsilon^\prime})_r, (W_t \otimes v)_r)\nonumber \\
&~~~~~~~~~~ - F(\bar{A}_t^\epsilon, (Z_t \otimes u^{\epsilon^\prime})_r, (W_t \otimes v)_r) 	 \bigr | \dd r \bigr | \mathcal{F}_t \Bigr ]  \nonumber \\ 
& \leq  C  \tau\mathbb{E} \Bigl [ d_{\infty}^\kappa(X_r^{t,\bar{A}_t^\epsilon; Z_t \otimes u^{\epsilon^\prime}, W_t \otimes v}, \bar{A}_t^\epsilon) | \mathcal{F}_t \Bigr ] +  C \tau d_{\infty}^\kappa(\bar{A}_t^\epsilon, \bar{A}_t)
\nonumber \\
& \leq C \tau^{1 + \frac{\kappa}{2}}  + C\tau (4 \mu_0 \epsilon \mu^{-1})^\kappa,
\end{align}
and
\begin{align}
\label{eq_5_15}
|L^{(3)}| & \leq C \mathbb{E} \Bigl [ \int_t^{t+\tau} (\Phi_r - 1) \dd r \bigl | \mathcal{F}_t \Bigr ]  \leq C \tau \mathbb{E} \Bigl [ \sup_{r \in [t,t+\tau]} |\Phi_r - 1| | \mathcal{F}_t \Bigr ] \leq C \tau^{\frac{3}{2}}.
\end{align}

Hence, by substituting (\ref{eq_5_12})-(\ref{eq_5_15}) into (\ref{eq_5_11_1}), we have
\begin{align*}
&  (4 \mu_0 \epsilon \mu^{-1})^{\kappa} \frac{1}{\tau} + C(1+\mu_0^6) \Bigl (\tau^{\frac{d}{2}(\frac{1}{2}-\kappa)-1} \epsilon^{-\frac{d}{2}} + \frac{\tau^{\frac{3}{2}}}{\delta^3} \Bigr )(\tau^{\frac{1}{2}}+ \tau^{\frac{\kappa}{2}} + C (4 \mu_0 \epsilon \mu^{-1})^{\frac{\kappa}{2}}) \\
&~~~ + C \tau^{\frac{\kappa}{2}} + C(4 \mu_0 \epsilon \mu^{-1})^\kappa + C \tau^{\frac{1}{2}}  + \epsilon^\prime  \\
& \geq \frac{1}{\tau} \mathbb{E} \Bigl [ \int_t^{t+\tau} F(\bar{A}_t, (Z_t \otimes u^{\epsilon^\prime} )_r, (W_t \otimes v)_r) \dd r | \mathcal{F}_t \Bigr ].
\end{align*}
Let $\tau = \mu^{-\frac{\kappa}{2}}$. Then the arbitrariness of $v$ and $\epsilon^\prime$, and the definition of $F$ imply that
\begin{align*}
0 \geq & \limsup_{\mu \rightarrow \infty}  \mathbb{H}^-(\bar{A}_t,Z_t, W_t, (\partial_t \phi^{u,v}, \phi, \partial_x \phi, \partial_{xx} \phi)(\bar{A}_t;Z_t, W_t) ) .
\end{align*}
This shows that (\ref{eq_2_6}) is the viscosity super-solution of (\ref{eq_5_2}).

Next, we prove that (\ref{eq_2_6}) is the viscosity sub-solution of the lower PHJI equation in (\ref{eq_5_2}). From (i) in Definition \ref{Definition_5_2} and Lemma \ref{Lemma_0_5}, for $\phi \in \mathcal{C}^{1,2}_{\kappa}(\Lambda; \hat{\Lambda} )$, $\mu, \mu_0 > 0$, 
\begin{align*}
		0 = (\mathbb{L} - \phi)(\bar{A}_t; Z_t,W_t) = \sup_{A_s \in \mathbb{C}^{\kappa,\mu,\mu_0}} (\mathbb{L} - \phi)(A_s; Z_t,W_t),
\end{align*}
where $\bar{A}_t \in \mathbb{C}^{\kappa,\mu,\mu_0}$. This implies that $\mathbb{L}(\bar{A}_t; Z_t,W_t) = \phi(\bar{A}_t; Z_t,W_t)$, and for $A_t \neq \bar{A}_t$, $\phi (A_t; Z_t,W_t) \geq \mathbb{L} (A_t; Z_t,W_t)$.

From Lemmas \ref{Lemma_2} and \ref{Lemma_3}, $\mathbb{L} \in \mathcal{C}(\Lambda;\hat{\Lambda})$, and due to the definition of the lower value functional, $\mathbb{L}(A_T;Z_T,W_T) \leq  m(A_T)$. Then it is necessary to prove that
\begin{align*}
& \liminf_{\mu \rightarrow \infty}  \mathbb{H}^-(\bar{A}_t,Z_t, W_t, (\partial_t \phi^{u,v}, \phi, \partial_x \phi, \partial_{xx} \phi)(\bar{A}_t;Z_t, W_t) ) \geq 0.
\end{align*}

Now, suppose that this is not true, i.e., there exists a finite $\mu^\prime > 0$ such that for some $\theta > 0$,
\begin{align*}	
\mathbb{H}^-(\bar{A}_t,Z_t, W_t, (\partial_t \phi^{u,v}, \phi, \partial_x \phi, \partial_{xx} \phi)(A_t;Z_t, W_t) ) \leq - \theta < 0,
\end{align*}
where in view of the definition of $F$, $\sup_{v \in \mathrm{V}}	 \inf_{u \in \mathrm{U}} F(\bar{A}_t,Z_t^u,W_t^v)	\leq - \theta < 0$. Note that $\mathrm{V}$ and $\mathrm{U}$ are compact; hence, there exists a measurable function $\gamma : \mathrm{V} \rightarrow \mathrm{U}$ such that for any $v \in \mathrm{V}$ with $|r-t| \leq \tau_0$,
\begin{align}
\label{eq_5_18}
F(\bar{A}_r,Z_r^{\gamma(v)},W_r^v) \leq - \frac{1}{2}\theta.
\end{align}

On the other hand, from (\ref{eq_3_3}) of Theorem \ref{Theorem_1}, we have
\begin{align*}
 & \essinf_{\alpha \in \mathcal{A}[t,t+\tau]} \esssup_{v \in \mathcal{V}[t,t+\tau]} \Pi_{t}^{t,t+\tau,\bar{A}_t^\epsilon;Z_t \otimes \alpha(v), W_{t} \otimes v}  \\
&~~~~~ \Bigl [ \mathbb{L}(X_{t+\tau}^{t,\bar{A}_t^\epsilon; Z_{t} \otimes \alpha(v), W_t \otimes v}; (Z_t \otimes \alpha(v))_{t+\tau}, (W_t \otimes v)_{t+\tau} ) \Bigr ] 	- \mathbb{L}(\bar{A}_t^\epsilon; Z_t, W_t) = 0.
\end{align*}
By defining $\gamma_s(v) := \gamma(v_s(\omega))$ for $(s,\omega) \in [t,T] \times \Omega$, we have $\gamma \in \mathcal{A}[t,t+\tau]$ and $Z_t \otimes \gamma \in \mathcal{A}[0,T]$. This, together with the definition of $\Pi$ and the comparison principle in (iv) of Lemma \ref{Lemma_1}, implies that
\begin{align*}
& \esssup_{v \in \mathcal{V}[t,t+\tau]} \Pi_{t}^{t,t+\tau,\bar{A}_t^\epsilon;Z_t \otimes \gamma(v), W_{t} \otimes v}  \\
& \Bigl [ \phi(X_{t+\tau}^{t,\bar{A}_t^\epsilon; Z_{t} \otimes \gamma(v), W_t \otimes v}; (Z_t \otimes \gamma(v))_{t+\tau}, (W_t \otimes v)_{t+\tau} ) \Bigr ] 	- \phi(\bar{A}_t^\epsilon; Z_t, W_t) \geq 0.
\end{align*}
For each $\epsilon^\prime > 0$, similar to (\ref{eq_4_13_1_1}), we can choose $v^\prime \in \mathcal{V}[t,t+\tau]$ such that
\begin{align*}	
& \Pi_{t}^{t,t+\tau,\bar{A}_t^\epsilon;Z_t \otimes \gamma(v^\prime), W_{t} \otimes v^\prime} \\
&~~~~~  \Bigl [ \phi(X_{t+\tau}^{t,\bar{A}_t^\epsilon; Z_{t} \otimes \gamma(v^\prime), W_t \otimes v^\prime}; (Z_t \otimes \gamma(v^\prime))_{t+\tau}, (W_t \otimes v^\prime)_{t+\tau} ) \Bigr ] - \phi(\bar{A}_t^\epsilon; Z_t, W_t) \geq - \epsilon^\prime \tau.
\end{align*}
Note (\ref{eq_5_7}) and (\ref{eq_5_7_1_1}). Then, similar to (\ref{eq_5_11}), by Lemma \ref{Lemma_0_7}, we have
\begin{align*}
& \frac{1}{\tau} \mathbb{E} \Bigl [  \int_t^{t+\tau} \Phi_{r} F(X_r^{t,\bar{A}_t^\epsilon; Z_t \otimes \gamma(v^\prime), W_t \otimes v^\prime}, (Z_t \otimes \gamma(v^\prime))_r, (W_t \otimes v^\prime)_r) \dd r \bigl | \mathcal{F}_t \Bigr ] \geq - \epsilon^\prime.
\end{align*}
With the same technique as in the super-solution case and the definition of $\gamma$, by letting $\tau \downarrow 0$, the arbitrariness of $\epsilon^\prime$ and (\ref{eq_5_18}) imply that $0 \leq 	  F(\bar{A}_r,Z_r^{\gamma(v)},W_r^v) \leq -\frac{1}{2}\theta$. This induces $\theta \leq 0$, which leads to a contradiction. Hence, (\ref{eq_2_6}) is the viscosity sub-solution of the lower PHJI equation in (\ref{eq_5_2}). 

The proof for the upper value functional $\mathbb{U}$ being a viscosity solution to the upper PHJI equation in (\ref{eq_5_3}) is similar. We complete the proof of the theorem.
\end{proof}

We now discuss the existence of the game value of the SZSDG under the Isaacs condition. Specifically, we introduce the \emph{state and control path-dependent Isaacs condition}: for $(A_t,Z_t,W_t,r,y,p,P) \in  \Lambda \times \hat{\Lambda} \times \mathbb{R} \times \mathbb{R} \times \mathbb{R}^n \times \mathbb{S}^n$,
\begin{align}	
\label{eq_Isaccs_1}
 & \mathbb{H} (A_t,Z_t, W_t, r,y,p,P)  \\
 & := \mathbb{H}^- (A_t,Z_t, W_t, r,y,p,P) = \mathbb{H}^+ (A_t,Z_t, W_t, r,y,p,P). \nonumber
\end{align}
Then the existence of the game value can be stated as follows:
\begin{theorem}\label{Theorem_5_8_8_8}
	Suppose that Assumptions \ref{Assumption_1} and the uniqueness of the viscosity solutions of (\ref{eq_5_2}) and (\ref{eq_5_3}) hold. Under the Isaacs condition in (\ref{eq_Isaccs_1}), the game has a value, i.e., $\mathbb{L}(A_t) = \mathbb{U}(A_t) =: \mathbb{G}(A_t)$, where $\mathbb{G}$ is the unique viscosity solution of the following PHJI equation:
	\begin{align}
	\label{eq_6_4}
	\begin{cases}
	\mathbb{H}(A_t,Z_t, W_t, (\partial_t \mathbb{G}^{u,v}, \mathbb{G}, \partial_x \mathbb{G}, \partial_{xx} \mathbb{G})(A_t;Z_t, W_t) ) \\
	\mathbb{G}(A_T;Z_T, W_T) = m(A_T),~ (A_T,Z_T, W_T) \in \Lambda_T \times \hat{\Lambda}_T.
	\end{cases}
	\end{align}
	\end{theorem}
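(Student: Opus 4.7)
The plan is to use Theorem \ref{Theorem_5_4} together with the Isaacs condition to show that $\mathbb{L}$ and $\mathbb{U}$ are viscosity solutions of one and the same state and control path-dependent HJI equation, and then invoke the assumed uniqueness of viscosity solutions to conclude that they coincide.

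First, I would observe that under the Isaacs condition (\ref{eq_Isaccs_1}), the two Hamiltonians $\mathbb{H}^-$ and $\mathbb{H}^+$ collapse into a single functional $\mathbb{H}$. Hence the lower PHJI equation (\ref{eq_5_2}) and the upper PHJI equation (\ref{eq_5_3}) are, term by term, the same equation, and they share the common terminal condition $m(A_T)$. Consequently, the notions of viscosity sub-solution, super-solution, and solution of (\ref{eq_5_2}) and of (\ref{eq_5_3}) (in the sense of Definition \ref{Definition_5_2}) agree and both coincide with the corresponding notions for (\ref{eq_6_4}).

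Next, by Theorem \ref{Theorem_5_4}, the lower value functional $\mathbb{L}$ is a viscosity solution of (\ref{eq_5_2}) and the upper value functional $\mathbb{U}$ is a viscosity solution of (\ref{eq_5_3}); the regularity $\mathbb{L}, \mathbb{U} \in \mathcal{C}(\Lambda;\hat{\Lambda})$ required by Definition \ref{Definition_5_2} follows from Lemmas \ref{Lemma_2} and \ref{Lemma_3}. Combining this with the first step, both $\mathbb{L}$ and $\mathbb{U}$ are viscosity solutions of (\ref{eq_6_4}) with the same terminal data $m(A_T)$.

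Finally, invoking the assumed uniqueness of viscosity solutions of (\ref{eq_5_2}) and (\ref{eq_5_3}) — which under (\ref{eq_Isaccs_1}) is exactly uniqueness for (\ref{eq_6_4}) — yields
\begin{align*}
\mathbb{L}(A_t; Z_t, W_t) = \mathbb{U}(A_t; Z_t, W_t) =: \mathbb{G}(A_t; Z_t, W_t),
\end{align*}
for every $(A_t, Z_t, W_t) \in \Lambda \times \hat{\Lambda}$, which is precisely the existence of the game value. The common functional $\mathbb{G}$ is then, by construction, the unique viscosity solution of (\ref{eq_6_4}). The main conceptual point is that essentially no new analytic work is required beyond Theorem \ref{Theorem_5_4}; the real difficulty, namely establishing uniqueness of viscosity solutions for (\ref{eq_5_2}) and (\ref{eq_5_3}) in full generality, is deferred to the hypothesis (and is left as an open problem in the paper, with only the classical-solution uniqueness for the state-path-dependent case treated under Assumption \ref{Assumption_1} and the paper's Assumption 3).
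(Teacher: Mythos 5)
Your proposal is correct and follows essentially the same route as the paper: Theorem \ref{Theorem_5_4} gives that $\mathbb{L}$ and $\mathbb{U}$ are viscosity solutions of (\ref{eq_5_2}) and (\ref{eq_5_3}), the Isaacs condition (\ref{eq_Isaccs_1}) makes these the same equation (\ref{eq_6_4}), and the assumed uniqueness then forces $\mathbb{L}=\mathbb{U}=:\mathbb{G}$. Your write-up is if anything slightly more explicit than the paper's about why the two notions of viscosity solution coincide under the Isaacs condition, but the argument is the same.
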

	\begin{proof}
	In view of Theorem \ref{Theorem_5_4} and the uniqueness assumption, the lower value functional $\mathbb{L}$ and the upper value functional $\mathbb{U}$ are the unique viscosity solutions of (\ref{eq_5_2}) and (\ref{eq_5_3}), respectively. Then, the Isaacs condition in (\ref{eq_Isaccs_1}) implies $\mathbb{L}(A_t) = \mathbb{U}(A_t) =: \mathbb{G}(A_t)$, which is the unique solution to the PHJI equation in (\ref{eq_6_4}). We complete the proof.
\end{proof}

Before concluding the paper, we discuss the state path-dependent case, which is a special case of the SZSDG in this paper and was studied in \cite{Zhang_SICON_2017}.\footnote{As mentioned in Section \ref{Section_1}, \cite{Zhang_SICON_2017} considered the existence of the game value and the approximated saddle-point equilibrium, both under the Isaacs condition, but did not study the existence and uniqueness of (viscosity or classical) solutions of state path-dependent HJI equations.} Specifically, as stated in Remarks \ref{Remark_3_7} and \ref{Remark_5_1_1}, we need to assume that
\begin{assumption}\label{Assumption_2}
	$f:\Lambda \times \mathrm{U} \times \mathrm{V} \rightarrow \mathbb{R}^n$, $\sigma :\Lambda \times \mathrm{U} \times \mathrm{V} \rightarrow \mathbb{R}^{n \times p}$, $l :\Lambda \times \mathbb{R} \times \mathbb{R}^{1 \times p} \times \mathrm{U} \times \mathrm{V} \rightarrow \mathbb{R} $. 
\end{assumption}

\begin{remark}
With Assumption \ref{Assumption_2}, (\ref{eq_Isaccs_1}) becomes the state path-dependent Isaacs condition in \cite[(3.2)]{Zhang_SICON_2017}. Hence, Theorem \ref{Theorem_5_8_8_8} is reduced to \cite[Theorem 3.1]{Zhang_SICON_2017} when Assumption \ref{Assumption_2} holds.
\end{remark}

Under Assumption \ref{Assumption_2}, the lower and upper PHJI equations in (\ref{eq_5_2}) and (\ref{eq_5_3}) are reduced to the state path-dependent HJI equations (see Remark \ref{Remark_5_1_1}):
	\begin{align}
	\label{eq_6_1}
	& \begin{cases}
		\partial_t \mathbb{L}(A_t) + \sup_{v \in \mathrm{V}}	 \inf_{u \in \mathrm{U}}  \mathcal{H}(A_t,u,v,(\mathbb{L}, \partial_x \mathbb{L}, \partial_{xx} \mathbb{L})(A_t) )  = 0 ,\\
	~~~~~~~~~~ t \in [0,T),~ A_t \in \Lambda \\
	\mathbb{L}(A_T) = m(A_T),~ A_T \in \Lambda_T,
	\end{cases}
	\end{align}
	and
	\begin{align} 	
	\label{eq_6_2}
	& \begin{cases}
		\partial_t \mathbb{U}(A_t) +  \inf_{u \in \mathrm{U}} \sup_{v \in \mathrm{V}}	  \mathcal{H}(A_t,u,v,(\mathbb{U}, \partial_x \mathbb{U}, \partial_{xx} \mathbb{U})(A_t) )  = 0 ,\\
	~~~~~~~~~~ t \in [0,T),~ A_t \in \Lambda \\
	\mathbb{U}(A_T) = m(A_T),~ A_T \in \Lambda_T.
	\end{cases}	
	\end{align}

	\begin{assumption}\label{Assumption_3}
	Let $\bar{\mathcal{H}}(A_t,y,p,P) := \sup_{v \in \mathrm{V}}	\inf_{u \in \mathrm{U}}  \mathcal{H}(A_t,u,v,y,p,P)$ and \\$\tilde{\mathcal{H}}(A_t,y,p,P) :=   \inf_{u \in \mathrm{U}} \sup_{v \in \mathrm{V}}	   \mathcal{H}(A_t,u,v,y,p,P)$. For any $(A_t,p) \in \Lambda \times \mathbb{R}$, $y_1,y_2 \in \mathbb{R}$ and $P_1,P_2 \in \mathbb{S}^n$ with $y_1 \geq y_2$ and $P_1 \leq P_2$, 
		\begin{align*}
			\bar{\mathcal{H}}(A_t,y_1,p,P_1) \leq \bar{\mathcal{H}}(A_t,y_2,p,P_2),~ \tilde{\mathcal{H}}(A_t,y_1,p,P_1) \leq \tilde{\mathcal{H}}(A_t,y_2,p,P_2).
		\end{align*}
	\end{assumption}
	
We state the uniqueness of classical solutions of (\ref{eq_6_1}) and (\ref{eq_6_2}).
\begin{proposition}\label{Theorem_5_10}
		Assume that Assumptions \ref{Assumption_1}, \ref{Assumption_2} and \ref{Assumption_3} hold. Suppose that $\mathbb{L}_1$ and $\mathbb{L}_2$ are classical sub- and super-solutions of the lower PHJI equation in (\ref{eq_6_1}), respectively. Then we have $\mathbb{L}_1(A_t) \leq \mathbb{L}_2(A_t)$ for $A_t \in \Lambda$. The same result holds for the upper PHJI equation in (\ref{eq_6_2}). Consequently, there is a unique classical solution of (\ref{eq_6_1}) and (\ref{eq_6_2}).
	\end{proposition}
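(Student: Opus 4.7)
The plan is to prove the two one-sided bounds $\mathbb{L}_1 \le \mathbb{L} \le \mathbb{L}_2$, where $\mathbb{L}$ is the lower value functional (\ref{eq_2_6}) in its state path-dependent form (Remark \ref{Remark_3_7}); together they give the comparison $\mathbb{L}_1\le\mathbb{L}_2$, the analogous bounds handle the upper equation (\ref{eq_6_2}), and uniqueness is immediate because a classical solution is simultaneously sub- and super-solution. The approach is probabilistic, mirroring the proof of Theorem \ref{Theorem_5_4}: one exploits the $\mathcal{C}^{1,2}$ regularity of the classical solutions directly through the functional It\^o formula (Lemma \ref{Lemma_0_7}) and invokes the BSDE comparison principle (Lemma \ref{Lemma_1}(iv)). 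Assumption \ref{Assumption_3} is the monotonicity that propagates the pointwise sub-/super-solution inequalities at the Hamiltonian level down to the correct sign in the BSDE driver along the controlled trajectory.

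For $\mathbb{L}_1\le\mathbb{L}$, I fix $(t,A_t)$ and an arbitrary $\alpha\in\mathcal{A}[t,T]$. Since $\mathrm{V}$ is compact and $\mathcal{H}$ together with $(\mathbb{L}_1,\partial_x\mathbb{L}_1,\partial_{xx}\mathbb{L}_1)$ is jointly continuous, a Kuratowski--Ryll-Nardzewski measurable selection produces a Borel $v^\star:\Lambda\to\mathrm{V}$ attaining the outer supremum in $\bar{\mathcal{H}}(A_s,\mathbb{L}_1(A_s),\partial_x\mathbb{L}_1(A_s),\partial_{xx}\mathbb{L}_1(A_s))$, so that the sub-solution inequality reads
\begin{equation*}
\partial_t \mathbb{L}_1(A_s) + \mathcal{H}(A_s,u,v^\star(A_s),\mathbb{L}_1,\partial_x\mathbb{L}_1,\partial_{xx}\mathbb{L}_1) \ge 0 \quad\text{for every } u\in\mathrm{U}.
\end{equation*}
Feeding the feedback $v_s := v^\star(X_s)$ and $u_s := \alpha(v)_s$ into (\ref{eq_2_1}) yields, by Assumption \ref{Assumption_1}, a strong solution $X$ on $[t,T]$. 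Applying Lemma \ref{Lemma_0_7} to $\mathbb{L}_1(X_s)$ and recognising the Hamiltonian identity (\ref{eq_5_1}) in its drift produces
\begin{equation*}
d\mathbb{L}_1(X_s) \ge -\,l(X_s,\mathbb{L}_1(X_s),q^{(1)}_s,u_s,v_s)\,ds + q^{(1)}_s\,dB_s,
\end{equation*}
with $q^{(1)}_s := \langle \partial_x \mathbb{L}_1(X_s), \sigma(X_s,u_s,v_s)\rangle$, so $(\mathbb{L}_1(X_s),q^{(1)}_s)$ is a sub-solution of the BSDE (\ref{eq_2_2}) whose terminal value satisfies $\mathbb{L}_1(X_T)\le m(X_T)$. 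Lemma \ref{Lemma_1}(iv), together with the monotonicity in $y$ of the driver underwritten by Assumption \ref{Assumption_3}, gives $\mathbb{L}_1(A_t)\le J(A_t;\alpha(v),v)$; passing to $\esssup$ over $v$ and $\essinf$ over $\alpha$ yields $\mathbb{L}_1\le\mathbb{L}$.

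The reverse bound $\mathbb{L}_2\ge\mathbb{L}$ is symmetric: a measurable selection on $\bar{\mathcal{H}}(\cdot,\mathbb{L}_2,\partial_x\mathbb{L}_2,\partial_{xx}\mathbb{L}_2)$ produces $u^\star:\mathrm{V}\times\Lambda\to\mathrm{U}$ realising the inner infimum, and the nonanticipative feedback strategy $\alpha^\star(v)_s := u^\star(v_s, X^{t,A_t;\alpha^\star(v),v}_s)$ is defined as the solution of the corresponding coupled SDE. Running the It\^o--BSDE computation on $\mathbb{L}_2(X_s)$ upgrades it to a super-solution of (\ref{eq_2_2}) with $\mathbb{L}_2(X_T)\ge m(X_T)$, so BSDE comparison yields $\mathbb{L}_2(A_t)\ge J(A_t;\alpha^\star(v),v)$ for every $v$, whence $\mathbb{L}_2\ge\mathbb{L}$. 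Combining the two bounds gives $\mathbb{L}_1\le\mathbb{L}_2$; the upper equation (\ref{eq_6_2}) is handled identically with $\mathbb{U}$ in place of $\mathbb{L}$ and with the roles of the two players swapped.

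The main technical obstacles are twofold: (i) the measurable selection of the feedback maps $v^\star$ and $u^\star$ with sufficient regularity to produce nonanticipative controls, which I handle through the compactness of $\mathrm{U},\mathrm{V}$ and the continuity of $\mathcal{H}$ and of the derivatives of the classical solutions, and (ii) the strong solvability of the coupled feedback SDEs, for which Assumption \ref{Assumption_1} provides the Lipschitz/boundedness conditions. Assumption \ref{Assumption_3} is the essential structural hypothesis that allows the pointwise Hamiltonian inequalities to transfer, with the right sign, through the $y$- and Hessian-dependence required by Lemma \ref{Lemma_1}(iv) along the controlled trajectory, and it is the reason classical uniqueness can be concluded even though general viscosity uniqueness is left open.
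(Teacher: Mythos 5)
Your sandwich strategy $\mathbb{L}_1\le\mathbb{L}\le\mathbb{L}_2$ via verification is a genuinely different route from the paper, but as written it has a real gap at its core: the construction of the feedback controls. The measurable selections $v^\star:\Lambda\to\mathrm{V}$ and $u^\star$ obtained from Kuratowski--Ryll-Nardzewski are only Borel, so the closed-loop dynamics $\dd x_s=f(X_s,u_s,v^\star(X_s))\dd s+\sigma(X_s,u_s,v^\star(X_s))\dd B_s$ do not inherit the Lipschitz structure of Assumption \ref{Assumption_1}; strong solvability of such feedback SDEs is exactly the kind of statement Assumption \ref{Assumption_1} does \emph{not} deliver, and your second bound even requires solving a coupled fixed-point system $\alpha^\star(v)_s=u^\star(v_s,X^{t,A_t;\alpha^\star(v),v}_s)$ whose well-posedness is simply asserted. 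In the first bound there is the same circularity ($v_s=v^\star(X_s)$ while $X$ is driven by $\alpha(v)$ and $v$), and in addition the resulting processes need to be admissible in the sense of Definition \ref{Definition_1} (c\`adl\`ag, progressively measurable) and the strategies nonanticipative, none of which follows from mere measurability of the selection. Note that in the paper's own sub-solution argument for Theorem \ref{Theorem_5_4} the measurable map $\gamma$ acts on the \emph{control value}, $\gamma_s(v):=\gamma(v_s(\omega))$, precisely to avoid state-feedback SDEs; your proof cannot borrow that device because your selections depend on the state path. A secondary gap: you invoke Lemma \ref{Lemma_1}(iv), which compares two BSDE solutions with ordered drivers and terminal data, whereas $(\mathbb{L}_1(X_s),q^{(1)}_s)$ is only a semimartingale with dominated drift; this can be repaired by a linearization argument, but it is not the cited lemma.

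There is also a structural signal that something is off: Assumption \ref{Assumption_3} plays no genuine role in your argument. BSDE comparison needs only the Lipschitz property of $l$ in $(y,q)$ from Assumption \ref{Assumption_1}; the monotonicity of $\bar{\mathcal{H}}$, $\tilde{\mathcal{H}}$ in $(y,P)$ never enters an It\^o/BSDE computation, since the Hessian appears only through the It\^o formula applied to the classical solutions themselves. The paper uses Assumption \ref{Assumption_3} in an entirely analytic comparison argument: perturb $\mathbb{L}_1$ to the strict sub-solution $\mathbb{L}_1-\delta/t$, suppose the conclusion fails, maximize $\mathbb{L}_1-\mathbb{L}_2$ over the compact set $\mathbb{C}^{\kappa,\mu,\mu_0}$ (Lemma \ref{Lemma_0_5}), apply the first- and second-order conditions at the maximizer from \cite[Lemma 9]{Peng_Gexpectation_arXiv_2011} ($\partial_t(\mathbb{L}_1-\mathbb{L}_2)\le 0$, $\partial_x(\mathbb{L}_1-\mathbb{L}_2)=0$, $\partial_{xx}(\mathbb{L}_1-\mathbb{L}_2)\le 0$), and then use the monotonicity of $\bar{\mathcal{H}}$ in $(y,P)$ to chain the sub- and super-solution inequalities into the contradiction $0\ge\nu>0$. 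That route needs no value functional, no measurable selection, and no feedback SDE. If you want to salvage your probabilistic approach, you would have to either regularize the selections (losing exact optimality and tracking an $\epsilon$-error) or work with piecewise-constant approximating strategies, and prove admissibility and well-posedness at each step; as it stands the proof is not complete.
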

\begin{proof}
Let $\bar{\mathbb{L}}_1(A_t) := \mathbb{L}_1(A_t) - \frac{\delta}{t}$, where $\delta > 0$. Then we can easily see that $\bar{\mathbb{L}}$ is a classical sub-solution of the following PDE (see (2) of Remark \ref{Remark_5_1}):
\begin{align*}
	\begin{cases}
		\partial_t \bar{\mathbb{L}}_1(A_t) +   \bar{\mathcal{H}}(A_t,(\bar{\mathbb{L}}_1, \partial_x \bar{\mathbb{L}}_1, \partial_{xx} \bar{\mathbb{L}}_1)(A_t) )  \geq  \frac{\delta}{t^2} ,~ t \in [0,T),~ A_t \in \Lambda \\
	\bar{\mathbb{L}}_1(A_T) = m(A_T) - \frac{\delta}{T},~ A_T \in \Lambda_T.
	\end{cases}	
\end{align*}
Since $\mathbb{L}_1 \leq \mathbb{L}_2$ follows from $\bar{\mathbb{L}}_1 \leq \mathbb{L}_2$ in the limit $\delta \downarrow 0$, it suffices to prove the theorem with the following additional assumption:
\begin{align*}	
		\partial_t \mathbb{L}_1(A_t) +   \bar{\mathcal{H}}(A_t,(\mathbb{L}_1, \partial_x \mathbb{L}_1, \partial_{xx} \mathbb{L}_1)(A_t) )  \geq  \nu > 0,
\end{align*}
where $\nu := \frac{\delta}{t^2}$ and $\lim_{t \rightarrow 0} \mathbb{L}_1(A_t) = -\infty$ uniformly on $[0,T)$.

Assume that this is not true, that is, there exists $A_{t^\prime}^\prime \in \Lambda$ with $t^\prime \in [0,T]$ such that $k^\prime := \mathbb{L}_1(A_{t^\prime}^\prime) - \mathbb{L}_2(A_{t^\prime}^\prime) > 0$. In view of Lemma \ref{Lemma_0_5}, there exists $\bar{A}_{\bar{t}} \in \mathbb{C}^{\kappa,\mu,\mu_0}$ with $\bar{t} \in [0,T]$ such that $\mathbb{L}_1 (\bar{A}_{\bar{t}}) - \mathbb{L}_2(\bar{A}_{\bar{t}}) = \sup_{A_t \in \mathbb{C}^{\kappa,\mu,\mu_0}} \mathbb{L}_1 (A_t) - \mathbb{L}_2(A_t) \geq k^\prime$. 
Then from \cite[Lemma 9]{Peng_Gexpectation_arXiv_2011}, we have $\partial_t 	(\mathbb{L}_1 - \mathbb{L}_2)(\bar{A}_{\bar{t}}) \leq 0, ~\partial_x 	(\mathbb{L}_1 - \mathbb{L}_2)(\bar{A}_{\bar{t}}) = 0$ and $\partial_{xx} 	(\mathbb{L}_1 - \mathbb{L}_2)(\bar{A}_{\bar{t}}) \leq  0$.
This, together with Assumption \ref{Assumption_3} and the fact that $\mathbb{L}_2$ is the classical super-solution, implies that
\begin{align*}	
0  &\geq  \partial_t \mathbb{L}_2(\bar{A}_{\bar{t}}) + \bar{\mathcal{H}}(\bar{A}_{\bar{t}}, (\mathbb{L}_2,\partial_x \mathbb{L}_2,\partial_{xx} \mathbb{L}_2)(\bar{A}_{\bar{t}})) \\
& \geq \partial_t \mathbb{L}_1(\bar{A}_{\bar{t}}) + \bar{\mathcal{H}}(\bar{A}_{\bar{t}}, (\mathbb{L}_1,\partial_x \mathbb{L}_1,\partial_{xx} \mathbb{L}_1)(\bar{A}_{\bar{t}})) \geq  \nu > 0,
\end{align*}
which induces a contradiction. Hence, $\mathbb{L}_1(A_t) \leq \mathbb{L}_2(A_t)$ for $A_t \in \Lambda$. Suppose that $\hat{\mathbb{L}}$ and $\tilde{\mathbb{L}}$ are classical solutions of (\ref{eq_6_1}). Then we have $\hat{\mathbb{L}} \leq \tilde{\mathbb{L}}$ and $\hat{\mathbb{L}} \geq \tilde{\mathbb{L}}$, which implies $\mathbb{L} := \hat{\mathbb{L}} = \tilde{\mathbb{L}}$. Hence, the uniqueness follows. This completes the proof.
\end{proof}

%
%

There are several interesting potential future research problems. 

One important problem is the uniqueness of viscosity solutions of the (lower and upper) PHJI equations in (\ref{eq_5_2}) and (\ref{eq_5_3}). As mentioned in Section \ref{Section_1}, the uniqueness has not been shown even in the case of (strong and weak formulation) state path-dependent SZSDGs \cite{Possamai_arXiv_2018, Pham_SICON_2014, Zhang_SICON_2017}.

%


Another problem is the existence of the (approximated) saddle-point equilibrium using the notion of nonanticipative strategies with delay as mentioned in (2) of Remark \ref{Remark_3_6}. For the state dependent case (with Assumption \ref{Assumption_2}), this was shown in \cite[Theorem 4.13]{Zhang_SICON_2017} under the Isaacs condition, where the key step is approximating the PHJI equation in (\ref{eq_6_1}) and (\ref{eq_6_2}) to the state-dependent (not state path-dependent) HJI equations. Note that there is a unique viscosity solution of the approximated (lower and upper) state-dependent HJI equations in view of \cite[Theorem 5.3]{Buckdahn_SICON_2008}. Then, the existence of the (approximated) saddle-point equilibrium can be shown using the property of nonanticipative strategies with delay \cite[Lemma 2.4]{Buckdahn_SICON_2004}. We speculate that the approach of \cite[Theorem 4.13]{Zhang_SICON_2017} can be applied to the state and control path-dependent SZSDG of this paper. 

We can consider the problem in weak formulation. As noted in (3) of Remark \ref{Remark_3_6}, one major feature of this formulation is the symmetric feedback information between the players, which is convenient to show the existence of the saddle-point equilibrium and the game value. 


Finally, the forward-backward stochastic differential equation given in (\ref{eq_2_1}) and (\ref{eq_2_2}) is not fully coupled in the sense that the BSDE in (\ref{eq_2_2}) is not included in the (forward) SDE in (\ref{eq_2_1}). This can be extended to the fully-coupled FBSDE, where (\ref{eq_2_1}) is also dependent on (\ref{eq_2_2}). This can be viewed as a generalization of \cite{Li_SICON_2014}, where the major challenge is the case when the diffusion term of (\ref{eq_2_1}) depends on the second component of the solution of the BSDE, since the associated PHJI equation should require an additional algebraic equation.

\bibliographystyle{siam}
\bibliography{researches_1}
\end{document}